\documentclass[leqno]{report}

\usepackage{amsmath,amsthm,amsfonts,enumerate,verbatim,graphicx,wrapfig,epsfig,longtable,listings}
\usepackage{rac,dbl12,caption}
\usepackage[scaled=.8]{DejaVuSansMono}
\usepackage[all,arc]{xy}

\usepackage{mathrsfs,amssymb}

\usepackage[T2A,T1]{fontenc}
\newcommand{\Sha}{\mbox{\usefont{T2A}{\rmdefault}{m}{n}\CYRSH}}

\setlength{\oddsidemargin}{0.5in}

\lstset{basicstyle=\scriptsize\ttfamily}

\newcommand{\defof}[1]{\textbf{#1}}

\def\A{\ensuremath{\mathbb{A}}}
\def\C{\ensuremath{\mathbb{C}}}
\def\N{\ensuremath{\mathbb{N}}}
\def\P{\ensuremath{\mathbb{P}}}
\def\R{\ensuremath{\mathbb{R}}}
\def\Z{\ensuremath{\mathbb{Z}}}

\def\SS{\mathfrak{S}}
\def\TT{\mathfrak{T}}
\def\AA{\mathfrak{A}}
\def\II{\mathfrak{I}}

\DeclareMathOperator\codim{codim}
\DeclareMathOperator\ec{ec}
\DeclareMathOperator\rk{rk}
\DeclareMathOperator\mspan{span}

\newcommand{\inj}{\hookrightarrow}

\def\Mat{\ensuremath{\mathrm{Mat}}}

\theoremstyle{plain}
\newtheorem{thm}{Theorem}
\newtheorem{prop}[thm]{Proposition}
\newtheorem{lem}[thm]{Lemma}
\newtheorem{cor}[thm]{Corollary}

\theoremstyle{definition}
\newtheorem{cons}[thm]{Construction}
\newtheorem{defn}[thm]{Definition}
\newtheorem{defns}[thm]{Definitions}
\newtheorem{exmp}[thm]{Example}

\newtheorem{cex}[thm]{Counterexample}

\numberwithin{thm}{chapter}
    \captionsetup{%
      figurewithin=none,
      tablewithin=none
    }

\title{Geometric Shifts and Positroid Varieties}
\author{Nicolas Ford}

\begin{document}

\bibliographystyle{plain}

\titlepage{GEOMETRIC SHIFTS AND POSITROID VARIETIES}{Nicolas Ford}{Doctor of Philosophy}
{Mathematics} {2014}
{ Associate Professor David E. Speyer, Chair \\
  Associate Professor Henriette Elvang\\
  Professor Sergey Fomin\\
  Professor Thomas Lam \\
  Professor Karen E. Smith }

\initializefrontsections
\setcounter{page}{1}
\tableofcontents
\listoffigures

\startthechapters 

\chapter{Introduction}
Consider a point $x$ on the Grassmannian $G(k,n)$ of $k$-planes in $\C^n$. The \defof{matroid} of $x$ is defined to be the set of Pl\"ucker coordinates that are nonzero at $x$, and a \defof{matroid variety} is the closure of the set of points on $G(k,n)$ with a particular matroid. Many enumerative problems on the Grassmannian can be described in terms of matroid varieties; the Schubert varieties that form the usual basis for the cohomology ring of $G(k,n)$ are an especially well-behaved special case.

So if there were a good way to take a matroid and produce the cohomology class of the corresponding matroid variety, we would have a systematic way of solving any enumerative problem on the Grassmannian that can be described purely in terms of the vanishing or nonvanishing of Pl\"ucker coordinates. Unfortunately, in a sense that will be made more precise later, matroid varieties are very poorly behaved, and there is essentially no way to solve the problem just described in a reasonable and efficient manner: there is a strictly easier problem that is known to be NP-hard.

One might expect to have more success with a slightly more modest goal: find a well-behaved class of matroids for which the class of the corresponding matroid variety can be described nicely. There is a class of matroids called \defof{positroids} which are a natural candidate: the corresponding matroid varieties, called \defof{positroid varieties} are geometrically much better behaved, and indeed some work toward the goal of describing their cohomology classes has already been done. On the one hand, in \cite{klspos} there is a description of the cohomology class of a positroid variety in terms of a symmetric function called an ``affine Stanley symmetric function.'' When the cohomology ring of the Grassmannian is expressed as a quotient of the ring of symmetric functions, the affine Stanley function maps to the class of the corresponding positroid variety.

It would be nice, though, if there were a way to do the computation directly in the cohomology ring: the symmetric function description involves a mysterious change of basis that introduces several minus signs that all disappear after taking the quotient. What would be preferable would be a more combinatorial description that computes, from the positroid, the coefficient of each Schubert class in its cohomology class. Indeed, for an even more restricted class of matroid varieties called \defof{interval rank varieties}, there is a description that does exactly that. Given an interval rank variety, there is a sequence of degenerations in the Grassmannian that take it to a union of Schubert varieties, and there is a combinatorial procedure for keeping track of what happens along the way.

This thesis is in two parts. The first is an account of an attempt to extend the procedure that worked for interval rank varieties to positroids. We will see that, with a lot of arm-twisting, it can be made to work when $k\le 3$ (Theorem \ref{thm:rank3pos}), but when $k\ge 4$ it fails (Counterexample \ref{cex:squarepos}). In the second part, we have more success at an even more modest goal: it describes a new way to estimate just the codimension of a matroid variety purely in terms of the combinatorics of the matroid itself. The resulting number is not always the actual codimension, but we prove that in the case of positroids, it is. This work, which appears in Section 6, also appears in a paper \cite{fordecodim}, which has been submitted to the \textit{Journal of Algebraic Combinatorics}.

We write $[n]$ for the set $\{1,2,\ldots,n\}$, and for any set $S$ we write $\mathcal{P}(S)$ for the power set of $S$ and $\binom Sk$ for the set of all $k$-element subsets of $S$. $G(k,n)$ will always stand for the Grassmannian of $k$-planes in $\C^n$. For $S\in\binom{[n]}k$, we write $p_S$ for corresponding Pl\"ucker coordinate on $G(k,n)$; that is, thinking of elements of $G(k,n)$ as being represented by $k\times n$ matrices, $p_S$ is the determinant of the minor whose columns correspond to the elements of $S$. All varieties are over $\C$.

Fix a complete flag $0=V_0\subset V_1\subset\cdots\subset V_n=\C^n$. One may define a \defof{Schubert variety} from a partition $\lambda$ consisting of parts $n-k\ge\lambda_1\ge\cdots\ge\lambda_k\ge 0$. We define $\Omega_\lambda$ to be the set of all $W\in G(k,n)$ such that, for each $i$, \[\dim(W\cap V_{n+k+i-\lambda_i})\ge i.\] The corresponding cohomology classes are called \defof{Schubert classes}, written $\sigma_\lambda=[\Omega_\lambda]$. The Schubert classes corresponding to partitions that fit in a $k\times (n-k)$ box --- that is, partitions with at most $k$ parts each of which is $\le n-k$ --- form a basis for the cohomology ring of the Grassmannian. When a basis is chosen for $\C^n$, we often take the flag to be the one corresponding to that basis. In this case, taking the Schubert variety corresponding to the flag given by taking the basis in reverse order produces what we'll call an \defof{opposite Schubert variety}.

We write $\Lambda_k$ for the ring of symmetric polynomials in $k$ variables. The well-known \defof{Schur polynomials} corresponding to partitions with at most $k$ parts give a basis for $\Lambda_k$. There is a surjective map $\Lambda_k\to H^*(G(k,n))$ which takes each Schur polynomial $s_\lambda$ to $\sigma_\lambda$; its kernel is spanned by the $s_\lambda$ corresponding to partitions $\lambda$ with a part larger than $n-k$. Proofs of every claim in this and the preceding paragraph can be found in \cite[Ch. 3]{manivel}.

We will at some points have use for the $GL_k$-equivariant cohomology ring $H^*_{GL_k}(\Mat_{k\times n})$, where $\Mat_{k\times n}$ is the variety consisting of all $k\times n$ matrices. This ring is isomorphic to $\Lambda_k$ itself. Restricting to the subvariety $\Mat_{k\times n}^\circ$ of full-rank $k\times n$ matrices causes the action of $GL_k$ to be free, making the corresponding equivariant cohomology ring isomorphic to the cohomology ring of $\Mat_{k\times n}^\circ/GL_k\cong G(k,n)$. The restriction map \[\Lambda_k=H^*_{GL_k}(\Mat_{k\times n})\to H^*_{GL_k}(\Mat_{k\times n}^\circ)\cong H^*(G(k,n))\] is the same as the map described in the previous paragraph.

To see this, we first note that there is a $GL_k$-equivariant contraction of $\Mat_{k\times n}$, which allows us to just consider $H^*_{GL_k}(\mathit{pt})$. This, by Section 1 of \cite{andersonequivariant}, is the copy of $\Lambda_k$ sitting inside $\Z[x_1,\ldots,x_k]=H^*_{(\C^*)^k}(\mathit{pt})$, so it's enough to compute the $(\C^*)^k$-equivariant cohomology class of the matrix Schubert varieties. This is done in Section 3 of \cite{andersonequivariant}.

\chapter{Matroids and Matroid Varieties}
\section{Matroids}

We are going to be investigating subvarieties of Grassmannians defined by combinatorial objects called \defof{matroids}. There are many equivalent definitions of matroids, all useful in different contexts, and we are only going to mention two of them here. There is a lot of literature on the combinatorial theory of matroids. A good place to start might be \cite{whitemat}.

From our perspective, the purpose of a matroid is, given a collection of vectors in a vector space, to combinatorially capture the information about the linear relations among the vectors. We will consider two equivalent ways to do this. Details of these and other axiomatizations of matroids can be found in \cite[pp. 298--312]{whitemat}.

\begin{defn}
\label{def:basismat}
A matroid may be specified in terms of its \defof{bases}. According to this definition, a matroid $M$ is a finite set $E$ together with a collection of subsets $\mathscr{B}\subseteq\mathcal{P}(E)$. (An element of $\mathscr{B}$ is called a basis.) We require:
\begin{itemize}
\item $\mathscr{B}$ is not empty.
\item No element of $\mathscr{B}$ contains another.
\item For $B,B'\in\mathscr{B}$ and $x\in B$, there is some $y\in B'$ so that $B-\{x\}\cup\{y\}\in\mathscr{B}$.
\end{itemize}
Note that this is enough to force all bases to have the same number of elements.
\end{defn}

\begin{defn}
\label{def:matroidvector}
Suppose we have a finite-dimensional vector space $V$, a finite set $E$, and a function $e:E\to V$ whose image spans $V$. We can put a matroid structure on $E$ by taking $\mathscr{B}$ to be the collection of all subsets of $E$ which map \emph{injectively} to a basis of $V$. (The reason for this funny definition is that we'd like to be able to take the same element of $V$ more than once; otherwise $E$ could just be a subset of $V$. We will hardly ever be very careful about the difference between an element $x\in E$ and its image $e(x)\in V$.) It's an easy exercise in linear algebra to show that this definition satisfies the axioms above. Matroids which arise in this way are called \defof{realizable}.
\end{defn}

It will sometimes be convenient to be able to draw a picture of a realizable matroid by drawing the images of the elements of $E$ in the projective space $\P(V)$ and the linear subspaces they lie on. We will call these \defof{projective models}. For example, Figure \ref{fig:vmat} shows a projective model of the rank-3 matroid on $\{1,2,3,4,5\}$ in which $\{1,2,3\}$ and $\{3,4,5\}$ are the only three-element sets that are not bases.
\begin{figure}[t]
\begin{center}\includegraphics[width=6cm]{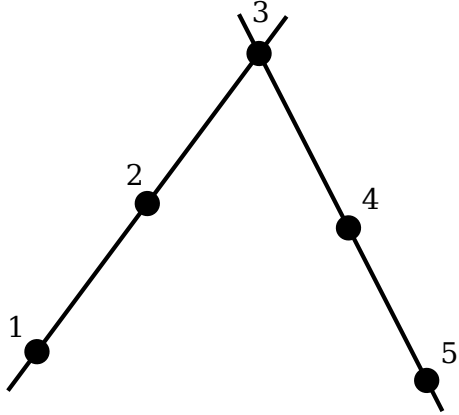}\end{center}
\caption{A projective model of the ``V'' matroid.}
\label{fig:vmat}
\end{figure}

The following terminology will be useful to have as we talk about matroids, so we present it here. Most (but not all) of the terminology mirrors the corresponding terminology from linear algebra in the realizable case, which should make those terms easier to remember. Throughout this discussion, unless otherwise specified, $M$ is a matroid on the set $E$.

\begin{defns}
\label{def:matroiddefs}
~\begin{enumerate}
\item A set which is contained in a basis is called \defof{independent}. Any other set is \defof{dependent}.
\item For $F\subseteq E$, the \defof{rank} of $F$, written $\rk F$, is the size of the largest independent set contained in $F$. Note that $\rk E$ is the same as the size of any basis. We will sometimes write $\rk M=\rk E$.
\item For a set $F$ and an element $x\in E$, we say that $x$ is in the \defof{closure} of $F$, written $x\in\overline F$, if $\rk(F\cup\{x\})=\rk F$. Note that, as the name suggests, closure is idempotent and inclusion-preserving. Sets which are their own closures are called \defof{flats}.

In the realizable case, the flats are the intersections of subspaces of $V$ with $E$.
\item A set $F$ which contains a basis is called a \defof{spanning set}. Equivalently, $F$ spans if $\rk F=\rk E$, or if $\overline F=E$.
\item If $\rk(\{x\})=0$, we say $x$ is a \defof{loop}. Equivalently, $x$ is not in any basis, or $x\in\overline\varnothing$, or $x$ is in every flat.

In the realizable case, loops are elements of $E$ which map to the zero vector in $V$.
\item If $\rk(E-\{x\})=\rk E-1$, we say $x$ is a \defof{coloop}. Equivalently, $x$ is in every basis.
\item If $\rk(\{x,y\})=1$, we say that $x$ and $y$ are \defof{parallel}. Equivalently, any flat which contains one of $x$ or $y$ also contains the other.
\item Given a set $F\subseteq E$, we can put a matroid structure on $F$ by saying $B\subseteq F$ is a basis of $F$ if it is maximal among independent sets contained in $F$. Note that this preserves the independence and dependence of subsets of $F$, and therefore also the ranks of subsets of $F$. This operation is called \defof{restricting} to $F$ or \defof{deleting} $E-F$, and we call the resulting matroid $M|F$ or $M-(E-F)$.

In the realizable case, this corresponds to restricting the function $e$ to $F$ and replacing $V$ with the span of the image of $F$.
\item There is a second way to put a matroid structure on $E-F$ called the \defof{contraction}, and written $M/F$. The rank function on the contraction is $\rk_{M/F}(A)=\rk(A\cup F)-\rk(F)$. 
\end{enumerate}
\end{defns}

It will also be convenient to note that matroids can be defined just by listing the axioms that have to be satisfied by the rank function defined above:

\begin{defn}
\label{def:rankmat}
A matroid may be specified in terms of the ranks of all its subsets. According to this definition, a matroid is a finite set $E$ together with a function $\rk:\mathcal{P}(E)\to\N$ satisfying:
\begin{itemize}
\item $\rk\varnothing=0$.
\item $\rk(F\cup\{x\})$ is either $\rk F$ or $\rk F+1$.
\item If $\rk F=\rk(F\cup\{x\})=\rk(F\cup\{y\})$, then $\rk(F\cup\{x,y\})=\rk F$.
\end{itemize}
Note that this is enough to force the useful inequality $\rk A+\rk B\ge\rk(A\cup B)+\rk(A\cap B)$.
\end{defn}

Given the same data we had to define a realizable matroid before --- a set $E$ with a function $e$ to a vector space $V$ --- we can get a rank function on $E$ by setting $\rk(F)=\dim(\mspan(e(F)))$.

We have already mentioned how to turn a collection of bases into a rank function. To go the other way, we can say $B$ is a basis if it is minimal among sets of maximal rank. One can check that these two correspondences make the two definitions given here equivalent. We will not distinguish between them as we go forward.

One more definition will be useful in later parts of this paper:

\begin{defn}
\label{def:matroidgen}
A \defof{pseudo-rank function} on a set $E$ is a function $r:\mathcal{P}(E)\to\N$ satisfying the first two criteria in Definition \ref{def:rankmat}.

Take some collection $\mathscr{S}$ of subsets of $E$ and some function $r:\mathscr{S}\to\N$. The \defof{pseudo-rank function generated by $r$} is the pointwise-largest pseudo-rank function on $\mathcal{P}(E)$ which agrees with $r$ on the sets in $\mathscr{S}$, if such a pseudo-rank function exists. (Note that, since the pointwise max of two pseudo-rank functions is still a pseudo-rank function, this is well-defined if it exists.) If this pseudo-rank function happens to be the rank function of a matroid $M$, we will sometimes say that $M$ is ``generated by the rank conditions given by $r$'' or that $M$ is the matroid ``defined by imposing the rank conditions given by $r$.''
\end{defn}

For example, the matroid on $[4]$ defined by imposing the conditions that $\{1,2\}$ has rank 1 and $\{1,2,3,4\}$ has rank 3 will give all one-element sets rank 1, all two-element sets other than $\{1,2\}$ rank 2, and both three-element sets other than $\{1,2,3\}$ and $\{1,2,4\}$ rank 3. Its bases are $\{1,3,4\}$ and $\{2,3,4\}$.

\section{Matroid Varieties}

As mentioned above, the main objects of study in this paper are certain subvarieties of Grassmannians which can be described in terms of matroids.

\begin{cons}
\label{cons:pointmat}
Consider the Grassmannian $G(k,n)$, which we'll think of as the set of $k\times n$ matrices of full rank modulo the obvious action of $GL_k$. When one builds the Grassmannian in this way, one ordinarily considers the $k$ rows of the matrix as elements of $\C^n$, and the action of $GL_k$ corresponds to automorphisms of the span of those elements, so that we are left with a variety that parametrizes the $k$-planes in $\C^n$.

We will think about our matrices the other way. Given a $k\times n$ matrix of full rank, consider the function $e:[n]\to\C^k$ which takes $i$ to the $i$'th column of our matrix. We can then use Definition \ref{def:matroidvector} to put a matroid structure on $[n]$. Since the action of $GL_k$ clearly doesn't change which matroid we get, we have assigned a matroid in a consistent way to every point of the Grassmannian. The Pl\"ucker coordinate $p_S$ corresponding to some $S\in\binom{[n]}k$ is given by the determinant of the submatrix defined by taking the columns in $S$. So $p_S$ vanishes precisely when these $k$ columns fail to span $\C^k$, that is, precisely when $S$ fails to be a basis of our matroid.

Given a matroid $M$ of rank $k$ on $[n]$, the \defof{open matroid variety} $X^\circ(M)$ is the subset of $G(k,n)$ consisting of all points whose matroid is $M$. This is a locally closed subvariety of $G(k,n)$: it is defined by taking all Pl\"ucker coordinates corresponding to bases of $M$ to be nonzero and all the other Pl\"ucker coordinates to be zero. The closure of $X^\circ(M)$ is called the \defof{matroid variety} $X(M)$. We can define a matroid variety inside $\Mat_{k\times n}$ in the same way. We write $V^\circ(M)$ for the open matroid variety in $\Mat_{k\times n}$ and $V(M)$ for its closure. The open matroid variety in $\Mat_{k\times n}$ doesn't intersect the subvariety of matrices of less than full rank, but its closure will.
\end{cons}

The reader who is familiar with the definition of Schubert varieties may be tempted to ignore part of the definition above and take $X(M)$ to be the subvariety of $G(k,n)$ defined by setting all the Pl\"ucker coordinates corresponding to nonbases of $M$ to zero. Sadly, this is not the same as the definition given above.

\begin{cex}
\label{cex:pluckergen}
Consider the rank-3 matroid $A$ on $[7]$ generated by the conditions that $\{1,2,7\}$, $\{3,4,7\}$, and $\{5,6,7\}$ have rank 2. The variety $X(A)$ is not cut out by the ideal $(p_{127},p_{347},p_{567})$. That ideal cuts out two components: $X(A)$ and the variety of the matroid in which 7 is a loop. The ideal of $X(A)$ is actually \[(p_{127},p_{347},p_{567},p_{124}p_{356}-p_{123}p_{456}).\]
\end{cex}

Matroid varieties can be used to encode any number of enumerative geometry problems. For example, Schubert varieties are matroid varieties, as are intersections of Schubert varieties and opposite Schubert varieties. Of course, there are many more matroid varieties than just these, so coming up with a way to find the cohomology class of a matroid variety would enable one to solve a much larger set of combinatorial problems about linear arrangements of points. Since the multiplication rule for Schubert classes is well-known, it would be enough to come up with an algorithm that takes in a matroid and outputs its class as a linear combination of Schubert classes.

In general, matroid varieties are under no obligation to be geometrically well-behaved. They don't have to be irreducible, equidimensional, normal, or even generically reduced. It is too much to hope that we might find an algorithm that can efficiently produce the class of an arbitrary matroid variety: even the problem of determining whether a given matroid variety is empty or not is NP-hard \cite{stretch}. To make progress on this question, it will be necessary to be more modest in our goals and only look for the classes of certain well-behaved matroid varieties. It is to that task that the rest of this paper is dedicated.

\section{Operations on Matroids and Matroid Varieties}

We first establish some results which describe the effects of some simple operations on the cohomology class of a matroid. We will use these later in the paper to help us describe the classes of a few matroid varieties.

\begin{defns}
\label{def:directsum}
~\begin{enumerate}
\item Let $M$ be a matroid on $E$ and $N$ be a matroid on $F$. The \defof{direct sum} of $M$ and $N$ is the matroid $M\oplus N$ on $E\sqcup F$ defined by \[\rk_{M\oplus N}(S)=\rk_M(S\cap E)+\rk_N(S\cap F).\]
\item If $M$ is a matroid, the \defof{loop extension} of $M$ is the matroid $M\oplus x_0$ formed by taking the direct sum of $M$ with the unique matroid of rank 0 on the one-element set $\{x\}$, so that the new element $x$ is a loop.
\item The \defof{coloop extension} of $M$ is the matroid $M\oplus x_1$ formed by taking the direct sum of $M$ with the unique matroid of rank 1 on $\{x\}$, so that $x$ is a coloop.
\end{enumerate}
\end{defns}

It's straightforward to compute the cohomology class of $X(M\oplus N)$ given the classes of $X(M)$ and $X(N)$.

\begin{prop}
\label{prop:directsum}
Given a matroid $M$ of rank $k$ on $[m]$ and $N$ of rank $l$ on $[n]$, we can think of them both as matroids on $[m+n]=[m]\sqcup[n]$ in the natural way: $M$ puts conditions on the points in $\{1,\ldots,m\}$ and $N$ puts conditions on the points in $\{m+1,\ldots,m+n\}$. Interpreted in this way, $[X(M\oplus N)]=[X(M)][X(N)]$ in $G(k+l,m+n)$.
\end{prop}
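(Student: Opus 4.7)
The plan is to realize $V(M \oplus N) \subseteq \Mat_{(k+l)\times(m+n)}$ as the transverse intersection of two matroid conditions imposed on disjoint column blocks, and to descend the resulting product formula from $GL_{k+l}$-equivariant cohomology to $H^*(G(k+l, m+n))$.

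First, split the ambient matrix space as $\Mat_{(k+l)\times(m+n)} = \Mat_{(k+l)\times m} \times \Mat_{(k+l)\times n}$ according to $[m+n] = [m] \sqcup \{m+1,\ldots,m+n\}$. Let $\tilde V(M) \subseteq \Mat_{(k+l)\times m}$ be the closure of the locus of matrices whose $m$ columns realize the matroid $M$ in $\C^{k+l}$ (which forces column rank $k$), and define $\tilde V(N) \subseteq \Mat_{(k+l)\times n}$ similarly. Set $Z_M := \tilde V(M) \times \Mat_{(k+l)\times n}$ and $Z_N := \Mat_{(k+l)\times m} \times \tilde V(N)$. Because they depend on disjoint column blocks, their intersection $Z_M \cap Z_N = \tilde V(M) \times \tilde V(N)$ is transverse, with $\codim Z_M + \codim Z_N = (l(m-k) + c_M) + (k(n-l) + c_N) = lm + kn - 2kl + c_M + c_N$, which matches $\codim V(M \oplus N)$. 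The crucial identification is that $\tilde V(M) \times \tilde V(N) = V(M \oplus N)$ as sets: any full-rank matrix in the product has complementary column spans (forced by dimensions) and hence matroid exactly $M \oplus N$, while rank-degenerate matrices lie in the closure via direct perturbation of the last $n$ columns into complementary position.

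In $GL_{k+l}$-equivariant cohomology $H^*_{GL_{k+l}}(\Mat_{(k+l)\times(m+n)}) = \Lambda_{k+l}$, transversality gives $[V(M \oplus N)] = [Z_M] \cdot [Z_N]$. I would then identify $[Z_M] \in \Lambda_{k+l}$ with the image of $[V(M)] \in \Lambda_k$ under the natural ring map $\Lambda_k \to \Lambda_{k+l}$ induced by the upper-left block inclusion $GL_k \hookrightarrow GL_{k+l}$, and likewise for $[Z_N]$. Restricting to full-rank matrices and quotienting by $GL_{k+l}$, as in the final paragraph of the introduction, turns $\Lambda_{k+l} \to H^*(G(k+l,m+n))$ into the usual Schur-to-Schubert map, and the equivariant identity descends to $[X(M \oplus N)] = [X(M)][X(N)]$ in $H^*(G(k+l,m+n))$.

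The main obstacle is this equivariant identification $[Z_M] = \iota_*[V(M)]$ under $\iota: \Lambda_k \hookrightarrow \Lambda_{k+l}$: one must show that enlarging the ambient matrix space from $\Mat_{k\times m}$ to $\Mat_{(k+l)\times m}$ while preserving the matroid $M$ corresponds on the equivariant-cohomology side precisely to adding $l$ more variables to the Schur polynomials. Geometrically, $\tilde V(M)$ is obtained from $V(M)$ via an associated-bundle construction over $GL_{k+l}/P$ (where $P$ is the parabolic stabilizing a $k$-plane), and the standard pushforward formula for such bundles yields the desired identification; once this is in place, the routine full-rank descent finishes the proof.
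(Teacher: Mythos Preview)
Your core approach is exactly the paper's: pass to $H^*_{GL_{k+l}}(\Mat_{(k+l)\times(m+n)})$, observe that the two conditions depend on disjoint column blocks, and conclude transversality because the tangent space to each factor contains the full span of the other block of columns. That part is correct and is literally all the paper does.

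Where you go astray is in the interpretation of the statement. The phrase ``interpreted in this way'' means that $X(M)$ and $X(N)$ are already matroid varieties in $G(k+l,m+n)$, coming from $M$ and $N$ viewed as rank-$(k+l)$ matroids on $[m+n]$ with no conditions on the other block. In your notation, $V(M)$ and $V(N)$ in the matrix space are precisely your $Z_M$ and $Z_N$; there is nothing to identify. Your ``main obstacle'' --- matching $[Z_M]\in\Lambda_{k+l}$ with the image of a class from $\Lambda_k$ via an associated-bundle argument --- is answering a question the proposition does not ask. Once you drop that step, your argument collapses to the paper's three-sentence proof: $V(M\oplus N)=Z_M\cap Z_N$ is transverse, hence $[V(M\oplus N)]=[Z_M][Z_N]$ in $\Lambda_{k+l}$, and restriction to full-rank matrices gives the statement in $H^*(G(k+l,m+n))$.
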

\begin{proof}
This result is easier to see in $H^*_{GL_{k+l}}(\Mat_{(k+l)\times(m+n)})$, which is enough to prove the statement for the Grassmannian as well. In fact, $X(M\oplus N)$ is the transverse intersection of $X(M)$ and $X(N)$. The tangent space of $\Mat_{(k+l)\times(m+n)}$ at any point is naturally $\Mat_{(k+l)\times(m+n)}$ itself. At any point of $X(M)\cap X(N)$, the tangent space of $X(M)$ contains the span of the columns in $\{m+1,\ldots,m+n\}$, and the tangent space of $X(N)$ contains the span of the columns in $\{1,\ldots,m\}$, so together they span the entirety of $\Mat_{(k+l)\times(m+n)}$.
\end{proof}

\begin{cor}
\label{cor:loopcoloop}
Let $X(M)\subseteq G(k,n)$ be a matroid variety. Then the class of $X(M\oplus x_0)$ in $H^*(G(k,n+1))$ is $\sigma_{1^k}\cdot[X(M)]$, and the class of $X(M\oplus x_1)$ in $H^*(G(k+1,n+1))$ is $\sigma_{n-k}\cdot[X(M)]$.\qed
\end{cor}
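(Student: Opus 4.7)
The plan is to apply Proposition \ref{prop:directsum} with $N$ equal to the one-element matroid $x_0$ (for the loop case) or $x_1$ (for the coloop case); this immediately yields $[X(M\oplus x_i)] = [X(M)]\cdot[X(x_i)]$ in the appropriate Grassmannian cohomology. All that remains is to identify the two factors $[X(x_0)]$ and $[X(x_1)]$ with the claimed Schubert classes in the ambient Grassmannian.

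For the loop case, $X(x_0)\subset G(k,n+1)$ is the subvariety defined by the condition that the added column realizes the rank-$0$ matroid on a single element, i.e., is the zero vector. Equivalently, this is $\{W\in G(k,n+1): W\subseteq V_n\}$, which is precisely the Schubert variety $\Omega_{(1^k)}$ of class $\sigma_{1^k}$.

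For the coloop case, $X(x_1)\subset G(k+1,n+1)$ is defined by the condition that the added column realizes the rank-$1$ matroid on a single element (i.e., is nonzero) inside a matrix of full rank $k+1$. Since the added column now contributes $1$ to the total rank, the first $n$ columns are forced to have rank at most $k$. Looking at the projection $W\to V_n$, whose kernel is $W\cap\mathrm{span}(e_{n+1})$, this rank drop is equivalent to $e_{n+1}\in W$, cutting out the Schubert variety $\Omega_{(n-k)}$ of class $\sigma_{n-k}$.

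The subtle point---really the only obstacle---is the coloop case: naively the condition ``the last column is nonzero'' is generic and would contribute class $1$, but in the proposition's ambient Grassmannian $G(k+1,n+1)$ one also has the full-rank constraint on the whole matrix, which together with $\operatorname{rk}(x_1)=1$ forces the nontrivial rank drop on the first $n$ columns and produces the class $\sigma_{n-k}$. Once $[X(x_0)]$ and $[X(x_1)]$ have been read off this way, both formulas fall out of Proposition \ref{prop:directsum}.
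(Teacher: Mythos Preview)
Your loop case is fine and matches what the paper has in mind (the paper itself just writes \qed after Proposition~\ref{prop:directsum}). The coloop case, however, has a real error in the bookkeeping.

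In the proposition's setup, the factor $X(x_1)\subset G(k+1,n+1)$ is the closure of the locus where column $n{+}1$ alone realizes the rank-$1$ matroid, with \emph{no} conditions whatsoever on columns $1,\ldots,n$. That condition is just ``column $n{+}1$ is nonzero,'' which is open and dense; its closure is all of $G(k+1,n+1)$ and its class is $1$, not $\sigma_{n-k}$. Your attempted fix in the last paragraph --- that the full-rank constraint together with $\rk(\{n{+}1\})=1$ forces the first $n$ columns to have rank at most $k$ --- is simply false: a generic point of $G(k+1,n+1)$ has full rank \emph{and} a nonzero last column \emph{and} first $n$ columns of rank $k{+}1$.

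The $\sigma_{n-k}$ actually lives in the other factor. In the proposition, $X(M)\subset G(k+1,n+1)$ is the locus where columns $1,\ldots,n$ realize $M$; since $\rk M=k$, this already forces those $n$ columns to span only a $k$-dimensional subspace of $\C^{k+1}$, which is precisely the Schubert condition $e_{n+1}\in W$ that you correctly identified as having class $\sigma_{n-k}$. Thus the proposition's $[X(M)]$ in $H^*(G(k+1,n+1))$ is not the naive Schur-basis lift of $[X(M)]\in H^*(G(k,n))$, but $\sigma_{n-k}$ times that lift (concretely: the embedding $G(k,n)\hookrightarrow G(k+1,n+1)$, $W\mapsto W\oplus\langle e_{n+1}\rangle$, has image of class $\sigma_{n-k}$, and pushes $[X(M)]$ forward to $\sigma_{n-k}\cdot[X(M)]$). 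Your final formula comes out right only because you have moved this factor from $[X(M)]$ to $[X(x_1)]$, and the justification you gave for doing so does not hold.

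Note the asymmetry with the loop case: there $\rk M = k$ equals the rank of the ambient $G(k,n+1)$, so the proposition's $[X(M)]$ really is just the free-extension class, and all the nontrivial content sits in $[X(x_0)]=\sigma_{1^k}$, exactly as you wrote.
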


The next definition is a bit more complicated than the ones that precede it. It gives a way to add a new element to a matroid that is similar to the coloop extension except that it doesn't increase the total rank.

\begin{defn}
\label{def:freeext}
Let $M$ be a matroid of rank $k$ on a set $E$. The \defof{free extension} of $M$ by $x$ is the matroid $M+x$ on $E\sqcup\{x\}$ that we get by adding a new, unconstrained element $x$ in a way that does not increase the total rank. Its rank function is defined by\[\rk_{M+x} S=\left\{
\begin{array}{rl}
\rk_M S,&x\notin S\\
\rk_M(S-\{x\})+1,&x\in S\mbox{ and }\rk_M(S-\{x\})<k\\
k,&\rk_M(S-\{x\})=k
\end{array}
\right.\]
\end{defn}

The class of a free extension can also be described in terms of the class of the original matroid. This result will be much easier to state and deal with if we work in $H^*_{GL_k}(\Mat_{k\times n})$ instead of $H^*(G(k,n))$.

\begin{prop}
\label{prop:freeext}
If $M$ is a matroid, the equivariant cohomology classes of $X(M)$ in $H^*_{GL_k}(\Mat_{k\times n})$ and of $X(M+x)$ in $H^*_{GL_k}(\Mat_{k\times (n+1)})$ are represented by the same symmetric function.
\end{prop}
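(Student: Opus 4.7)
The plan is to show that $V(M+x) = \pi^{-1}(V(M))$, where $\pi:\Mat_{k\times(n+1)}\to\Mat_{k\times n}$ is the $GL_k$-equivariant projection that forgets the last column. Once that set-theoretic identity is in hand, one finishes as follows: $\pi$ is a trivial rank-$k$ vector bundle, so $\pi^*$ is an isomorphism on equivariant cohomology, and under the identifications of both $H^*_{GL_k}(\Mat_{k\times n})$ and $H^*_{GL_k}(\Mat_{k\times(n+1)})$ with $\Lambda_k$ described in the introduction (both obtained by $GL_k$-equivariantly contracting the matrix space to $\mathit{pt}$), the map $\pi^*$ is literally the identity, since $\pi$ commutes with the contractions. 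Hence $[V(M+x)]=\pi^*[V(M)]$ represents the same symmetric function as $[V(M)]$.

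The substantive step is the identity $V(M+x)=\pi^{-1}(V(M))$. Unpacking Definition \ref{def:freeext}, a matrix $A\in\Mat_{k\times(n+1)}$ lies in the open stratum $V^\circ(M+x)$ exactly when (i) its first $n$ columns form a realization of $M$, i.e.\ $\pi(A)\in V^\circ(M)$, and (ii) its last column $v$ does not lie in the span of the columns indexed by any non-spanning subset $S\subseteq[n]$ of $M$. The non-spanning flats of $M$ are finite in number and each has span a proper subspace of $\C^k$, so condition (ii) cuts out a dense Zariski-open subset of the fiber $\C^k$ of $\pi$ over each point of $V^\circ(M)$. Thus $V^\circ(M+x)$ is open and dense in $\pi^{-1}(V^\circ(M))$. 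Since $\pi$ is flat with irreducible fibers, taking closures commutes with preimage, giving
\[
V(M+x)=\overline{V^\circ(M+x)}=\overline{\pi^{-1}(V^\circ(M))}=\pi^{-1}\bigl(\overline{V^\circ(M)}\bigr)=\pi^{-1}(V(M)).
\]

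The main obstacle is the careful verification of (ii): one must check that a generic choice of $v$ over a generic realization of $M$ produces a configuration whose rank data match $\rk_{M+x}$ \emph{exactly}, not just on the piecewise cases listed in Definition \ref{def:freeext} but also on all mixed subsets. This reduces to the observation that whether $v\in\mathrm{span}(S)$ depends only on $\mathrm{span}(S)$ and so is controlled by the ranks $\rk_M(S)$, and that avoiding the finitely many proper spans is a generic condition. Everything else — the equivariance of $\pi$, the contractibility of $\Mat_{k\times m}$, and the triviality of the identification with $\Lambda_k$ under $\pi^*$ — is formal.
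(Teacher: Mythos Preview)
Your argument is correct and follows the same route as the paper: identify $V(M+x)$ with $\pi^{-1}(V(M))$ and then use that $\pi^*$ is the identity on $\Lambda_k$ after the canonical identifications coming from contractibility. The paper's proof simply asserts the first step without justification, whereas you carry out the verification that $V^\circ(M+x)$ is fiberwise dense in $\pi^{-1}(V^\circ(M))$ and that closures behave well under the projection; this is extra detail rather than a different idea.
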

\begin{proof}
The map that takes $X(M)$ to $X(M+x)$ is the pullback in $GL_k$-equivariant cohomology along the projection map $\pi:\Mat_{k\times(n+1)}\to\Mat_{k\times n}$ that kills the last column. But both $\Mat_{k\times (n+1)}$ and $\Mat_{k\times n}$ are contractible, so both of their equivariant cohomology rings are canonically isomorphic to $H^*_{GL_k}(\mathit{pt})=\Lambda_k$, and therefore, after identifying the three cohomology rings, $\pi^*$ is just the identity on $\Lambda_k$.
\end{proof}

\begin{defn}
\label{def:connected}
If $M$ can't be written as a direct sum in a nontrivial way, we say that $M$ is \defof{connected}. If we write $M=\bigoplus_iA_i$ with each $A_i$ connected, then the $A_i$'s are uniquely determined, and we call them the \defof{connected components} of $M$.
\end{defn}

There are two other, equivalent ways to define connectedness \cite[pp. 108-110]{whitemat}:
\begin{itemize}
\item $M$ is connected if there is no proper, nonempty subset $S\subseteq E$ for which $\rk S+\rk(E-S)=\rk E$.
\item A \defof{circuit} of $M$ is a minimal dependent set, that is, a dependent set $C$ for which every proper subset is independent. We can define an equivalence relation on $E$ by saying $x$ is equivalent to $y$ if either $x=y$ or there is a circuit of $M$ containing both $x$ and $y$. The connected components of $M$ are the equivalence classes under this relation.
\end{itemize}

\begin{defn}
\label{def:dual}
Let $M$ be a matroid of rank $k$ on a set $E$, with $\#E=n$. The \defof{dual} of $M$ is the matroid $M^*$ on $E$ whose bases are exactly the complements of bases of $M$.
\end{defn}

The rank of a set $S$ in $M^*$ works out to be $\#S-k+\rk(E\setminus S)$. In particular $M^*$ has rank $n-k$.

\begin{prop}
\label{prop:dual}
Consider the map $\omega:H^*(G(k,n))\to H^*(G(n-k,n))$ defined by taking $\sigma_\lambda$ to $\sigma_{\lambda^{\vee}}$ and extending linearly. Then $[X(M^*)]=\omega([X(M)])$.
\end{prop}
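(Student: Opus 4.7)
The plan is to realize $\omega$ geometrically via the duality isomorphism $\phi:G(k,n)\to G(n-k,n)$ sending a subspace $W$ to its orthogonal complement $W^\perp$ (with respect to the standard bilinear form on $\C^n$ coming from our fixed basis), and then show that $\phi$ carries $X(M)$ onto $X(M^*)$. Since $\phi$ is an isomorphism of Grassmannians, it sends cohomology classes of subvarieties to cohomology classes of their images, so once both facts are in place the proposition follows immediately.

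First I would recall that the duality isomorphism $\phi$ realizes $\omega$ on cohomology. Concretely, $\phi$ carries the Schubert variety $\Omega_\lambda$ associated to a flag $V_\bullet$ to the Schubert variety $\Omega_{\lambda^\vee}$ associated to the dual flag (the $V_i^\perp$), and hence $\phi_*\sigma_\lambda=\sigma_{\lambda^\vee}$. This is standard; a reference is \cite[Ch.~3]{manivel}. So it is enough to prove $\phi(X(M))=X(M^*)$.

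The heart of the argument is the classical Pl\"ucker identity: if $W\in G(k,n)$ is represented by a $k\times n$ matrix $A$, and we take $B$ to be an $(n-k)\times n$ matrix whose rows span $\ker(A:\C^n\to\C^k)=W^\perp$, then for every $S\in\binom{[n]}k$ one has
\[p_S(W)=\pm c(W)\cdot p_{[n]\setminus S}(W^\perp),\]
where $c(W)$ is a nonzero scalar depending on the choice of $B$ but not on $S$. This is a standard Cauchy--Binet computation and I would either cite it or verify it in a sentence or two. From this identity it is immediate that the set of Pl\"ucker coordinates that vanish on $W$ is complementary to the set that vanish on $W^\perp$, which is exactly the statement that the matroid of $W^\perp$ is the dual of the matroid of $W$ (since, by Definition \ref{def:dual}, the bases of $M^*$ are the complements of the bases of $M$).

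Therefore $\phi$ maps $X^\circ(M)$ bijectively onto $X^\circ(M^*)$; since $\phi$ is an isomorphism of varieties it preserves closures, giving $\phi(X(M))=X(M^*)$. Applying $\phi_*$ to cohomology classes, $[X(M^*)]=\phi_*[X(M)]=\omega([X(M)])$. The only step with any content is the Pl\"ucker duality formula above; everything else is a formal consequence of the fact that $\phi$ is an isomorphism of Grassmannians that induces $\omega$ on cohomology.
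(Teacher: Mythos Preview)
Your argument is correct and is essentially the same as the paper's: both identify $\omega$ with the cohomology map induced by the duality isomorphism between $G(k,n)$ and $G(n-k,n)$, note that under this isomorphism the Pl\"ucker coordinate $p_S$ corresponds to $p_{[n]\setminus S}$, and conclude from the definition of $M^*$ that $X(M)$ is carried to $X(M^*)$. The paper's proof is just a terser version of yours, asserting the Pl\"ucker correspondence where you justify it via the orthogonal-complement/Cauchy--Binet description.
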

\begin{proof}
In fact, $\omega$ is the map on cohomology induced by the isomorphism $G(n-k,n)\to G(k,n)$ that takes each Pl\"ucker coordinate $p_S$ to $p_{[n]-S}$. Some set $S$ is a basis for $M^*$ if and only if $[n]-S$ is a basis for $M$, so we see that this isomorphism takes $X(M^*)$ to $X(M)$.
\end{proof}

It is important to note that restriction and contraction are dual to each other. That is, $(M|_S)^*=M^*/(E-S)$, and $(M/S)^*=M^*|_{E-S}$.

\chapter{Interval Rank Varieties and the Geometric Shift}
\label{sec:irank}
We mentioned earlier that intersections of Schubert varieties and opposite Schubert varieties are a special case of matroid varieties. These are called \defof{Richardson varieties}. Because Schuberts and opposite Schuberts are transverse, a Richardson variety is a representative of the product of the cohomology classes of the Schuberts used to construct it. So finding an algorithm for expressing the cohomology class of a Richardson variety in terms of Schuberts is the same as finding a way to multiply Schubert classes.

Such an algorithm is called a \defof{Littlewood-Richardson rule}, and, as mentioned in the introduction, there are already many different Littlewood-Richardson rules that can be described by lots of different types of combinatorial gadgets. The rule that we are going to look at in detail here is the one first described by Ravi Vakil in \cite{vakilir} and later in different language and more generality by Allen Knutson in \cite{knutir}. Their approach has a distinct advantage over other Littlewood-Richardson rules in that it can be described purely geometrically. That is, Vakil and Knutson start with a Richardson variety, perform a specific sequence of degenerations, and end up with Schubert varieties at the end. By understanding the varieties that show up in the middle of the sequence of process and how they behave under this operation, one can read off the coefficient of some Schubert class simply by counting how many times the corresponding Schubert variety appears at the end of this process.

What will interest us about this rule is the fact that it does more than just provide a way to multiply Schubert classes. The matroid varieties that appear in the middle of the sequence of degenerations are called ``interval rank varieties,'' and even though this was not the original aim, their procedure ends up providing a way to find the cohomology class of an arbitrary interval rank variety. Our goal in this paper, which we will only partially accomplish, will be to generalize their degeneration procedure to find the classes of a larger collection of matroid varieties.

We will start by describing the degeneration procedure used in their rule:

\begin{defn}
\label{def:geomshift}
Given a closed subset $V\subseteq G(k,n)$, the \defof{geometric shift from $i$ to $j$ of $V$} is the variety \[\Sha_{i\to j}V=\lim_{t\to\infty}\exp(te_{ij})\cdot V,\] where $e_{ij}$ is the matrix whose only nonzero entry is a 1 in row $i$ and column $j$. That is, take the set of points $\{(\exp(te_{ij})v,v):v\in V\}$ in $G(k,n)\times\A^1$, take the closure inside $G(k,n)\times\P^1$, and let $\Sha_{i\to j}V$ be the fiber over $\infty$.
\end{defn}

By the definition of rational equivalence, $[V]$ and $[\Sha_{i\to j}V]$ are equal in the Chow ring of $G(k,n)$. In general, even if $V$ is irreducible, its shifts might have multiple components. The Littlewood-Richardson rule we're examining works by taking a Richardson variety and performing a prescribed sequence of geometric shifts. Every time we have multiple irreducible components, we will perform the remaining shifts on each component separately, and at the end of this process everything will have become a Schubert variety.

So it's enough to understand what happens to our varieties when we do a geometric shift. The general question, even just for matroid varieties, will prove to be very difficult, and we'll see in Counterexample \ref{cex:squarepos} that, in general, a geometric shift of a matroid variety doesn't have to be a matroid variety. However, we will be able to understand enough about the behavior of geometric shifts to make our Littlewood-Richardson rule work. We will start with a simple case:

\begin{prop}
\label{prop:shiftsingle}
Take $S\subseteq[n]$ and $0\le r<k$, and let $M$ be the matroid on $[n]$ generated (in the sense of Definition \ref{def:matroidgen}) by imposing the condition that $S$ has rank $r$. If $i\in S$ or $j\notin S$, then $\Sha_{i\to j}X(M)=X(M)$. Otherwise, $\Sha_{i\to j}X(M)=X(M')$, where $M'$ is the matroid in which $S-\{j\}\cup\{i\}$ has rank $r$ and there are no other conditions.
\end{prop}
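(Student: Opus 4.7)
The plan is to treat the two cases of the statement separately, built around a concrete description of how right-multiplication by $g_t := \exp(te_{ij})$ acts on a representative matrix $A$ of a point in $G(k,n)$: every column of $Ag_t$ agrees with the corresponding column of $A$ except column $j$, which becomes $A_j + tA_i$.

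For Case 1 ($i \in S$ or $j \notin S$) I would show that $X(M)$ is pointwise invariant under the entire one-parameter subgroup $g_t$, so its flat limit at $t=\infty$ is itself. If $j \notin S$, the columns indexed by $S$ are literally untouched. If instead $i,j \in S$, then the column operation on column $j$ adds a multiple of a vector that already lies in the column-span of the set $S$, so the span (and hence the rank) of those columns is unchanged. Either way $\operatorname{rk}_{v \cdot g_t}(S) = \operatorname{rk}_v(S)$, giving $g_t \cdot X(M) = X(M)$ for every $t$ and hence $\Sha_{i \to j} X(M) = X(M)$.

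Case 2 ($i \notin S$, $j \in S$) is the substantive one, and the plan is to first establish $\Sha_{i \to j} X(M) \subseteq X(M')$ by a Pl\"ucker calculation and then upgrade to equality by a dimension argument. Setting $S' := (S \setminus \{j\}) \cup \{i\}$, expanding the determinant along the modified column shows $p_T(Ag_t) = p_T(A)$ unless $j \in T$ and $i \notin T$, in which case $p_T(Ag_t) = p_T(A) \pm t\, p_{T''}(A)$ with $T'' := (T \setminus \{j\}) \cup \{i\}$. Passing to the projective limit as $t \to \infty$ annihilates every coordinate of the first type and returns $\pm p_{T''}(v)$ on the second. A short combinatorial check shows $|T \cap S'| = |T \cap S| - 1 = |T'' \cap S|$ for $T$ of the second type, so if $T$ is not a basis of $M'$ (i.e.\ $|T \cap S'| > r$) then $|T'' \cap S| > r$, forcing $p_{T''}(v) = 0$ for $v \in X^\circ(M)$. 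Every limit Pl\"ucker coordinate corresponding to a non-basis of $M'$ therefore vanishes, placing the limit in $X(M') = \{w : \operatorname{rk}_w(S') \le r\}$, and by density the containment passes from $X^\circ(M)$ to all of $X(M)$.

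To promote the containment to an equality, I would note that $X(M)$ and $X(M')$ are both special Schubert varieties of common codimension $(k-r)(|S|-r) = (k-r)(|S'|-r)$, and hence irreducible of the same dimension. The flat limit $\Sha_{i \to j}X(M)$ has dimension at least $\dim X(M)$ by upper semicontinuity of fiber dimension, and being contained in the irreducible $X(M')$ of the same dimension forces $\Sha_{i \to j}X(M) = X(M')$. The main obstacle I anticipate is the Pl\"ucker bookkeeping in Case 2 --- tracking signs and the subset combinatorics of $T, T'', S, S'$ --- together with making the density argument rigorous at degenerate boundary points of $X(M)$ where all the leading $t$-coefficients happen to vanish; the dimension argument is what cleanly finesses the latter concern, so getting it set up correctly is the crucial step.
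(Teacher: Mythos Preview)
Your Case~1 is fine, and the overall shape of Case~2 --- establish $\Sha_{i\to j}X(M)\subseteq X(M')$ and then use irreducibility and equal dimension to upgrade to equality --- is the right strategy. The gap is in the containment step. You compute the pointwise limit $\lim_{t\to\infty}g_tv$ for generic $v\in X(M)$, check it lands in $X(M')$, and then invoke ``density.'' But the flat limit $\Sha_{i\to j}X(M)$ is the special fiber of a family, not the closure of the set of pointwise limits, and these genuinely differ: in $\P^2$ with $g_t\cdot[x:y:z]=[x+ty:y:z]$ and $X=\{x=0\}$, every pointwise limit is one of the two points $[1:0:0]$, $[0:0:1]$, yet $\Sha X=\{y=0\}$ is the full line through them. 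Your dimension argument only applies \emph{after} the containment is known; it cannot be used to finesse this step as you suggest.

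The repair uses essentially your same calculation, but on equations rather than points. The variety $g_tX(M)$ is cut out by the $(r{+}1)\times(r{+}1)$ minors of the submatrix with columns $\{B_l:l\in S\setminus\{j\}\}\cup\{B_j-tB_i\}$. For $t\neq0$, scaling the last column by $t^{-1}$ leaves the rank (hence the locus) unchanged, so with $s=t^{-1}$ the same variety is cut out by the minors of $\{B_l:l\in S\setminus\{j\}\}\cup\{sB_j-B_i\}$. These minors are polynomial in $s$, giving a family over all of $\A^1_s$ whose fiber at $s=0$ is visibly $\{\rk(B_{S'})\le r\}=X(M')$. Each fiber is an irreducible determinantal variety of the same dimension, so no component of the total space lies over $s=0$; hence this \emph{is} the flat closure and $\Sha_{i\to j}X(M)=X(M')$ directly, without a separate dimension step. (For the record, the paper itself does not argue this proposition; it cites \cite[6.1]{knutir}.)
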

\begin{proof}
This is \cite[6.1]{knutir}.
\end{proof}

When one performs a geometric shift on a Richardson variety, the result is almost never another Richardson variety. By picking our shifts carefully, though, we are able to make sure we stay inside a larger but still well-behaved class of matroid varieties:

\begin{defn}
\label{def:interval}
An \defof{interval} is a subset of $[n]$ of the form $\{i:a\le i\le b\}$.
\end{defn}

\begin{defn}
\label{def:irank}
An \defof{interval rank matroid} is a matroid which is generated by putting rank conditions on intervals. An \defof{interval rank variety} is the matroid variety of an interval rank matroid.
\end{defn}

Note that Schubert and Richardson varieties are themselves interval rank varieties. It's not true that all shifts of interval rank varieties are still interval rank varieties. The strategy is to instead find a specific sequence of shifts that always works. That is, starting with an interval rank variety, we perform a specific geometric shift which gives us a reduced union of different interval rank varieties of the same dimension. Then we can take each of those components and repeat the process until we only have Schubert varieties left.

\begin{lem}
\label{lem:irank}
Suppose $M$ is an interval rank matroid of rank $k$, defined by rank conditions of the form ``$I_m$ has rank $r_m$'' for intervals $I_m$. Suppose further that, for some $i,j\in[n]$, there is a unique $p$ such that $i\notin I_p$ and $j\in I_p$, and that $I_p-\{j\}\cup\{i\}$ is an interval. If $M_p$ is the matroid generated only by the condition on $I_p$ and $M'$ is the matroid generated by all the rank conditions except the one on $I_p$, then \[\Sha_{i\to j}X(M)=X(M')\cap\Sha_{i\to j}X(M_p)\] as schemes.
\end{lem}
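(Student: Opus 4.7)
The plan is to reduce the scheme-theoretic equality to two ingredients: (A) $X(M) = X(M') \cap X(M_p)$ as schemes, and (B) $X(M')$ is invariant under the one-parameter subgroup $g_t := \exp(t e_{ij})$.

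For (B), the hypothesis that $p$ is the unique index with $i \notin I_p$ and $j \in I_p$ forces every other interval $I_m$ to satisfy either $i \in I_m$ or $j \notin I_m$. The action of $g_t$ is the column operation that adds $t$ times column $i$ to column $j$, and in either of those cases this leaves the columns indexed by $I_m$ unchanged as a spanning set (either column $j$ is not in $I_m$ at all, or both columns $i$ and $j$ are in $I_m$ and one is performing an elementary column operation inside $I_m$). So each $X(M_m)$ for $m \neq p$ is $g_t$-invariant, and hence so is $X(M')$.

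Granting (A) and (B), for every $t \in \A^1$ one has
\[ g_t X(M) = g_t\bigl(X(M') \cap X(M_p)\bigr) = X(M') \cap g_t X(M_p), \]
so the two $\A^1$-families $\{g_t X(M)\}$ and $\{X(M') \cap g_t X(M_p)\}$ coincide as closed subschemes of $G(k,n) \times \A^1$. Their scheme-theoretic closures in $G(k,n) \times \P^1$ therefore agree, and in particular their fibres over $\infty$ coincide. The left-hand fibre is $\Sha_{i \to j} X(M)$ by definition; to identify the right-hand fibre with $X(M') \cap \Sha_{i \to j} X(M_p)$, I would verify that the closed subscheme $(X(M') \times \P^1) \cap \overline{\{g_t X(M_p)\}}$ is flat over $\P^1$, so that it equals the scheme-theoretic closure of its restriction to $\A^1$. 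This is a proper-intersection statement: since $X(M')$ is $g_t$-stable and meets each $g_t X(M_p)$ in the expected dimension, no embedded components or extraneous irreducible components can appear in the fibre over $\infty$.

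The main obstacle is (A). A general matroid variety is not cut out scheme-theoretically by the rank conditions generating its matroid (cf.\ Counterexample~\ref{cex:pluckergen}), so (A) is a genuine statement about the interval-rank setting. I would prove it either by direct analysis of the defining determinantal ideals attached to intervals (they are prime, and their sum is reduced of the expected codimension because the intervals are ``combinatorially compatible'' in a way that avoids the pathology of the counterexample) or by appealing to the corresponding statement for interval rank varieties in \cite{knutir}.
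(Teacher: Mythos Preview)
The paper does not actually prove this lemma: its entire proof reads ``This is proved in Section~5.3 of \cite{knutintpos}, which is forthcoming.'' So there is no argument in the paper to compare yours to; what follows is an assessment of your outline on its own terms.

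Your decomposition into ingredients (A) and (B) is the right skeleton, and (B) is fine as stated. Together with (A), your family argument yields the containment
\[
\Sha_{i\to j}X(M)\ \subseteq\ X(M')\cap\Sha_{i\to j}X(M_p)
\]
as schemes: over $\A^1$ the two families agree, the right-hand family $(X(M')\times\P^1)\cap\overline{\{g_tX(M_p)\}}$ is closed, and the left-hand family is the closure of its restriction to $\A^1$.

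The genuine gap is the reverse inclusion, i.e.\ your flatness claim. You assert that $(X(M')\times\P^1)\cap\overline{\{g_tX(M_p)\}}$ is flat over $\P^1$ because ``$X(M')$ meets each $g_tX(M_p)$ in the expected dimension,'' but that statement concerns only the fibres over $\A^1$ and says nothing about what happens at $\infty$. The fibre over $\infty$ of $\overline{\{g_tX(M_p)\}}$ is $\Sha_{i\to j}X(M_p)$, and there is no a~priori reason why $X(M')$ should meet \emph{this} scheme properly, or why the intersection should be free of embedded components supported over $\infty$. This is exactly the sort of degeneration-and-intersect commutation that can fail (compare the appearance of non-matroid components in Counterexample~\ref{cex:squarepos}), and it is where the real content of the lemma lives. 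Your argument would need a genuine input here --- for instance, a Gr\"obner or standard-monomial argument showing that the sum of the defining ideals is already saturated with respect to the hyperplane at $\infty$, or the Cohen--Macaulayness/reducedness results for interval rank varieties from \cite{knutir} together with a dimension count at $\infty$.

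Ingredient (A) is, as you note, also nontrivial in view of Counterexample~\ref{cex:pluckergen}, but it does follow from the results you cite: the paper records (just before Definition~\ref{def:essentialset}, citing \cite[1.8]{knutir}) that an interval rank variety is cut out scheme-theoretically by its interval rank conditions, and $M$, $M'$, $M_p$ are all interval rank matroids.
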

\begin{proof}
This is proved in Section 5.3 of \cite{knutintpos}, which is forthcoming.
\end{proof}

From here, two tasks remain. We need to guarantee the existence of a pair $i,j$ as in Lemma \ref{lem:irank} for any interval rank variety, and we need to figure out what varieties we get as the components of the intersection that occurs on the last line. Both of these tasks will be accomplished by the same combinatorial object:

\begin{defn}
\label{def:irankmat}
An \defof{interval rank matrix of rank $k$} is an upper triangular $n\times n$ matrix of nonnegative integers $r_{ij}$ with the following properties:
\begin{enumerate}
\item $r_{1n}=k$.
\item $r_{i-1,j}$ and $r_{i,j+1}$ are both either $r_{ij}$ or $r_{ij}+1$. (Here we take $r_{ij}$ to be 0 whenever $i>j$.)
\item If $r_{ij}$=$r_{i-1,j}$=$r_{i,j+1}$, then $r_{i-1,j+1}=r_{ij}$.
\end{enumerate}
\end{defn}

There is a one-to-one correspondence between interval rank matroids of rank $k$ on $[n]$ and $n\times n$ interval rank matrices of rank $k$. In fact, given an interval rank matrix, imposing the condition on the Grassmannian that $\rk([i,j])\le r_{ij}$ (that is, just setting the corresponding Pl\"uckers equal to 0) is enough to define the corresponding interval rank variety, even as a scheme (\cite[1.8]{knutir}).

Many of the conditions in the interval rank matrix are redundant. For example, if we know that $\rk(\{1,2,3\})\le 1$, we don't need to also say that $\rk(\{1,2\})\le 1$ or $\rk(\{1,2,3,4\})\le 2$. So it's possible to eliminate the condition on $[i,j]$ whenever $r_{ij}=r_{i,j+1}$ or $r_{ij}=r_{i,j-1}+1$, and similarly for changing $i$. Chasing through the definitions, one can see that this is equivalent to the following:

\begin{defn}
\label{def:essentialset}
Let $(r_{ij})_{1\le i\le j\le n}$ be an interval rank matrix of rank $k$. We can use $r$ to define a \defof{partial permutation matrix} (that is, a matrix whose entries are all 0 except for at most one 1 in each row and column) by putting a 1 in position $(i,j)$ if and only if $r_{ij}=r_{i,j-1}=r_{i+1,j}\ne r_{i+1,j-1}$.

Take the partial permutation associated to an interval rank matrix, and ``cross out'' every position strictly below or to the left of a 1, and every empty row and empty column. We are left with some intact entries in the matrix, called the \defof{diagram} of the partial permutation. Some interval $[i,j]$ is called \defof{essential} if $(i,j)$ is in the upper-right corner of a connected component of the diagram.
\end{defn}

\begin{prop}
\label{prop:essential}
If $M$ is an interval rank matroid, the rank conditions coming from the essential set define $X(M)$ as scheme.
\end{prop}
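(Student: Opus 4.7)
The plan is to start with the full generating set from \cite[1.8]{knutir} and trim it down using scheme-theoretic implications between the rank conditions. Recall that \cite[1.8]{knutir} cuts out $X(M)$ as the scheme defined by the Pl\"ucker vanishings $p_S=0$ for every interval $[i,j]$ and every $S\in\binom{[n]}{k}$ with $|S\cap[i,j]|>r_{ij}$. My goal is to show that whenever $|S\cap[i,j]|>r_{ij}$ for some $[i,j]$, already $|S\cap[i',j']|>r_{i'j'}$ for some essential $[i',j']$.

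The key input will be a list of four \emph{transfer moves} on intervals. If $r_{ij}=r_{i,j+1}$, then $|S\cap[i,j]|>r_{ij}$ gives $|S\cap[i,j+1]|\ge|S\cap[i,j]|>r_{i,j+1}$, so a violation at $[i,j]$ transfers to a violation at $[i,j+1]$. Three analogous one-line counts, using $|S\cap[i,j\pm1]|=|S\cap[i,j]|\pm[j\in S]$ and the corresponding identity for $i$, give transfer moves to $[i,j-1]$ when $r_{ij}=r_{i,j-1}+1$, to $[i-1,j]$ when $r_{ij}=r_{i-1,j}$, and to $[i+1,j]$ when $r_{ij}=r_{i+1,j}+1$. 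Call $[i,j]$ \emph{indispensable} if none of these four moves is available, i.e.\ if
\[
r_{i,j-1}=r_{ij},\quad r_{i,j+1}=r_{ij}+1,\quad r_{i-1,j}=r_{ij}+1,\quad r_{i+1,j}=r_{ij}.
\]
Termination of iterated transfers follows from a monovariant: each move strictly decreases $(r_{i'j'},-j',i')$ in lexicographic order, because the two moves to smaller intervals strictly decrease $r$ while the two moves to larger intervals keep $r$ fixed but strictly decrease either $-j$ or $i$. So from any violated $[i,j]$, a finite chain of moves will reach an indispensable interval that $S$ still violates.

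It remains to identify the indispensable intervals with the essential intervals of Definition \ref{def:essentialset}. This is the main obstacle: the transfer-move step is an immediate count, but the translation into the partial-permutation / crossing-out language requires patient bookkeeping. The idea is to observe that the $1$'s in the partial permutation matrix mark precisely the positions at which the local $2\times 2$ block of $r$ is forced into the configuration $r_{ij}=r_{i,j-1}=r_{i+1,j}$ but $r_{ij}\ne r_{i+1,j-1}$, that the crossing-out procedure removes exactly those cells from which some transfer move would reach a lex-smaller interval, and that the cells surviving as upper-right corners of the resulting connected components are exactly those for which all four indispensability equalities hold. Once this last combinatorial identification is in place, the proposition follows.
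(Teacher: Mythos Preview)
The paper does not actually prove this proposition: its entire proof is the one-line citation ``This is \cite[2.3]{knutir}.'' Your proposal therefore goes further than the paper, supplying an argument where the paper simply defers to Knutson.

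Your argument is correct in substance. The four transfer moves and the lexicographic monovariant are clean and valid (modulo the obvious boundary conventions at $i=1$ and $j=n$, which you should state explicitly; note that a violation at $[1,n]$ is impossible since $r_{1n}=k=|S|$). What you call the ``main obstacle'' --- identifying your \emph{indispensable} intervals with the paper's \emph{essential} intervals --- is in fact already asserted in the paper itself, in the paragraph immediately preceding Definition~\ref{def:essentialset}: the conditions $r_{i,j-1}=r_{ij}$, $r_{i,j+1}=r_{ij}+1$, $r_{i-1,j}=r_{ij}+1$, $r_{i+1,j}=r_{ij}$ are exactly the negations of the four eliminability conditions listed there, and the paper says outright that ``chasing through the definitions'' shows this is equivalent to the essential-set description. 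So rather than redoing that combinatorial translation, you may simply invoke that paragraph.

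In short: the paper cites; you prove. Your route is sound, and the piece you flagged as incomplete is something the surrounding text already hands you.
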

\begin{proof}
This is \cite[2.3]{knutir}.
\end{proof}

This gives us a good way to choose our interval: we can find the essential set of our interval rank matroid and take, among all essential intervals starting after 1 which are tied for the rightmost right endpoint, the one with the rightmost left endpoint. If this interval is $[i,j]$, then the shift $\Sha_{i-1\to j}$ will clearly always satisfy the hypotheses of Lemma \ref{lem:irank}.

The last task is to describe what the result of this shift is. After we perform the shift, we will be left with a bunch of rank conditions on intervals inside $[n]$. We can take the pseudo-rank function generated by these conditions and try to use them to fill out an interval rank matrix. In general, we will fail: what we get won't always be an interval rank matrix because it won't satisfy property 3 in Definition \ref{def:irankmat}. So we need to figure out how to split up the subscheme of $G(k,n)$ we get by imposing these new conditions into irreducible components. This turns out to be surprisingly straightforward.

\begin{lem}
\label{lem:intersectir}
The intersection of interval rank varieties is a reduced union of interval rank varieties.
\end{lem}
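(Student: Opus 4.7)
The plan is to reduce to the intersection of two interval rank varieties by iteration, identify the components combinatorially from the interval rank data, and then verify reducedness. First, let $X(M_1)$ and $X(M_2)$ be two interval rank varieties with interval rank matrices $r^{(1)}$ and $r^{(2)}$. By Proposition \ref{prop:essential}, each is scheme-theoretically cut out by the Plücker coordinate conditions coming from its interval rank conditions, so the scheme-theoretic intersection $Z := X(M_1) \cap X(M_2)$ is the subscheme of $G(k,n)$ cut out by the combined system $\rk([i,j]) \le s_{ij}$ for every interval $[i,j]$, where $s_{ij} := \min(r^{(1)}_{ij}, r^{(2)}_{ij})$.

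For the set-theoretic decomposition, I would take an arbitrary point $W \in Z$ with matroid $M_W$ and observe that the matrix $t^W_{ij} := \rk_{M_W}([i,j])$ is itself an interval rank matrix in the sense of Definition \ref{def:irankmat}, even though $M_W$ need not be an interval rank matroid. Axioms (1) and (2) of that definition are immediate from the matroid rank axioms (Definition \ref{def:rankmat}). The key point is axiom (3): if $t^W_{ij} = t^W_{i-1,j} = t^W_{i,j+1}$, then applying the submodular inequality to $A = [i-1,j]$ and $B = [i,j+1]$, whose union is $[i-1,j+1]$ and whose intersection is $[i,j]$, gives $t^W_{i-1,j+1} + t^W_{ij} \le 2\, t^W_{ij}$, so $t^W_{i-1,j+1} \le t^W_{ij}$; monotonicity then forces equality. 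Hence $t^W \le s$ is a valid interval rank matrix, and, by Proposition \ref{prop:essential} applied to $N_{t^W}$, the point $W$ lies in the interval rank variety $X(N_{t^W})$ it determines. There are finitely many interval rank matrices with $t \le s$, and keeping only the componentwise-maximal ones $t^{(1)}, \ldots, t^{(m)}$ yields the decomposition $Z_{\mathrm{red}} = \bigcup_\alpha X(N_{t^{(\alpha)}})$.

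The remaining and main obstacle is reducedness: verifying that the scheme $Z$, defined by the combined Plücker vanishings, equals $\bigcup_\alpha X(N_{t^{(\alpha)}})$ as a scheme and not merely set-theoretically. The natural approach is to exhibit the ideal of $Z$ in the Plücker algebra as the intersection of the Plücker ideals of the $X(N_{t^{(\alpha)}})$. I would pursue this by appealing to the forthcoming work \cite{knutintpos} already invoked in Lemma \ref{lem:irank}, which develops the Plücker-ideal and Frobenius-splitting machinery needed to control the scheme structure of interval rank varieties and their intersections. Failing that, an inductive argument should be available: shift one essential rank condition at a time along a carefully chosen $\Sha_{i\to j}$, use Lemma \ref{lem:irank} to track the effect on each factor and thus on the intersection, and reduce to the case of Schubert or Richardson varieties, where reducedness is classical.
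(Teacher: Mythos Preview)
The paper does not prove this lemma at all: its entire proof is the citation ``This is \cite[2.2]{knutir}.'' So your proposal is strictly more ambitious than what the paper does.

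Your set-theoretic argument is correct and worth keeping. The observation that the interval-rank matrix $t^W$ of an arbitrary point $W$ automatically satisfies axiom (3) of Definition~\ref{def:irankmat} via submodularity is exactly right, and combined with the fact (cited just before Definition~\ref{def:essentialset}) that interval rank varieties are cut out scheme-theoretically by their Pl\"ucker conditions, this cleanly gives $Z_{\mathrm{red}}=\bigcup_\alpha X(N_{t^{(\alpha)}})$.

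Where your proposal has a genuine gap is reducedness. Your first suggestion, appealing to \cite{knutintpos}, cites the wrong source (the paper uses \cite{knutir}, not the forthcoming \cite{knutintpos}) and in any case is just a citation, so at that point you are doing no more than the paper. Your second suggestion, an induction via geometric shifts and Lemma~\ref{lem:irank}, is circular as stated: to make use of the output of a shift you need Proposition~\ref{prop:irankscheme} to split the resulting intersection into components, and the proof of Proposition~\ref{prop:irankscheme} explicitly invokes Lemma~\ref{lem:intersectir} for the scheme-theoretic statement. So the shifting route cannot bootstrap reducedness. The honest summary is that your set-theoretic decomposition is a genuine addition, but for reducedness you, like the paper, must cite \cite[2.2]{knutir} (whose argument uses Frobenius splitting and is not reproduced here).
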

\begin{proof}
This is \cite[2.2]{knutir}.
\end{proof}

\begin{prop}
\label{prop:irankscheme}
Let $M$ be an interval rank matrix, possibly not satisfying property 3 in Definition \ref{def:irankmat}. Suppose \[\begin{array}{cc}r&r+1\\r&r\end{array}\] appears somewhere in the middle of $M$. If $M_1$ is the rank matrix with that block replaced by \[\begin{array}{cc}r&r\\r&r\end{array}\] and $M_2$ is the rank matrix with that block replaced by \[\begin{array}{cc}r&r+1\\r-1&r\end{array},\] then the scheme defined by $M$ is the union of the schemes defined by $M_1$ and $M_2$.
\end{prop}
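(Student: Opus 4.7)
The plan is to prove the claim set-theoretically first via the submodularity of the rank function, and then upgrade to a scheme-theoretic equality using Lemma~\ref{lem:intersectir}.

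For the set-theoretic inclusion $V(M_1)\cup V(M_2)\subseteq V(M)$, note that each of $M_1$ and $M_2$ differs from $M$ by tightening a single rank bound (while agreeing elsewhere), so a matrix satisfying the conditions of $M_i$ automatically satisfies those of $M$. For the reverse inclusion, let $A$ be a $k\times n$ matrix representing a point of $V(M)$, and write $[a,b]$, $[a,b+1]$, $[a+1,b]$, $[a+1,b+1]$ for the intervals corresponding to the four cells of the offending block. Using $[a,b]\cup[a+1,b+1]=[a,b+1]$ and $[a,b]\cap[a+1,b+1]=[a+1,b]$, submodularity of the column-rank function yields
$$\rk A_{[a,b+1]}+\rk A_{[a+1,b]}\le\rk A_{[a,b]}+\rk A_{[a+1,b+1]}\le 2r.$$
Combined with the $M$-condition $\rk A_{[a,b+1]}\le r+1$, this leaves only two possibilities: either $\rk A_{[a,b+1]}\le r$, in which case $A$ satisfies all the conditions of $M_1$ and hence lies in $V(M_1)$; or $\rk A_{[a,b+1]}=r+1$ and consequently $\rk A_{[a+1,b]}\le r-1$, in which case $A\in V(M_2)$.

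To upgrade to the scheme-theoretic statement, observe that $V(M)$ is by construction the scheme-theoretic intersection of the single-condition interval rank varieties $V(\rk[i,j]\le r_{ij})$ running over all cells $(i,j)$ of $M$. By Lemma~\ref{lem:intersectir}, such an intersection is reduced (and decomposes as a union of interval rank varieties); the same lemma applied to $M_1$ and to $M_2$ shows that $V(M_1)$ and $V(M_2)$ are also reduced. The scheme-theoretic union of two reduced closed subschemes of $G(k,n)$ is again reduced (the intersection of two radical ideals is radical), and a reduced subscheme is determined by its support, so the set-theoretic equality established above lifts at once to scheme-theoretic equality.

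The scheme-theoretic step is the main obstacle in any attempt to prove this without appealing to Lemma~\ref{lem:intersectir}: one would otherwise have to verify directly that the Plücker ideal defined by the conditions of $M$ equals the ideal-theoretic intersection of the Plücker ideals of $V(M_1)$ and $V(M_2)$, a computation that even in the smallest case $G(2,4)$ already relies on the Plücker relation $p_{12}p_{34}-p_{13}p_{24}+p_{14}p_{23}=0$ to exhibit the needed ideal membership, and quickly becomes unwieldy in general. Using the lemma packages this uniformly.
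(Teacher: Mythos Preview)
Your proof is correct and follows exactly the paper's approach: the paper's proof reads in full ``As sets, this is clear. The statement about subschemes then follows from Lemma~\ref{lem:intersectir}.'' You have simply spelled out both steps---the submodularity argument for the set-theoretic equality and the reducedness argument for the scheme-theoretic upgrade---in detail. The final paragraph is commentary rather than proof, but nothing in it is wrong.
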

\begin{proof}
As sets, this is clear. The statement about subschemes then follows from Lemma \ref{lem:intersectir}.
\end{proof}

So now we have our procedure. Starting with a Richardson variety --- or indeed any interval rank variety at all --- we can perform a sequence of shifts, splitting the results up according to the prescription in Proposition \ref{prop:irankscheme} whenever we have more than one irreducible component. If we arrive at a point where no more shifting can be done, we must be in a position where every essential interval starts at 1, that is, we must have a Schubert variety. And every shift we do either decreases the sum of the left and right endpoints of all essential intervals (if the shift is irreducible) or expresses the cohomology class we're interested in as a nontrivial sum of smaller cohomology classes (if the shift isn't irreducible), and neither of these can continue forever. So this process must terminate eventually, and when it does we are left with a bunch of Schubert varieties with respect to the standard basis of $\C^n$.

\chapter{Positroid Varieties}
In addition to the fact that they fill in the gap between Richardson and Schubert varieties in the right way, interval rank varieties have several nice geometric properties. They share these properties with a larger class of matroid varieties, which we will describe in this section.

\begin{defn}
\label{def:cyclicinterval}
A \defof{cyclic interval} is a subset of $[n]$ which can be written as a cyclic permutation applied to an interval.
\end{defn}

\begin{prop}
\label{prop:posdefs}
If $M$ is a matroid of rank $k$ on $[n]$, the following are equivalent:
\begin{enumerate}
\item $M$ is generated by rank conditions on cyclic intervals.
\item $M$ is the matroid of an $\R$-point of the Grassmannian for which every Pl\"ucker coordinate is nonnegative.
\item $X(M)$ is the image of a Richardson variety in the flag variety $Fl(n)$ under the natural projection map $Fl(n)\to G(k,n)$.
\end{enumerate}
\end{prop}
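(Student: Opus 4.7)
The plan is to prove the cycle (3) $\Rightarrow$ (2) $\Rightarrow$ (1) $\Rightarrow$ (3), leaning on classical results about the totally nonnegative flag variety and Grassmannian, most of which are collected in \cite{klspos}.

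For (3) $\Rightarrow$ (2), I would invoke Rietsch's theorem that the totally nonnegative part of $Fl(n)$ is stratified by the totally positive parts of Richardson varieties, so every Richardson variety contains a dense set of totally positive $\R$-points. The projection $\pi: Fl(n) \to G(k,n)$ sends such a point to a totally nonnegative $\R$-point of the Grassmannian, since the Pl\"ucker coordinates of a flag $V_\bullet$ are (up to sign) maximal minors of any matrix representative of $V_k$, and these remain nonnegative. For a generic choice, the image lies in the open stratum $X^\circ(M)$, giving a totally nonnegative $\R$-point whose matroid is $M$.

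For (2) $\Rightarrow$ (1), let $x$ be a totally nonnegative $\R$-point with matroid $M$, let $r_{[i,j]}$ be the rank in $M$ of a cyclic interval $[i,j]$, and let $M'$ be the matroid generated by the conditions $\rk([i,j]) = r_{[i,j]}$ for every cyclic interval. Since $M$ itself satisfies these conditions, every basis of $M$ is a basis of $M'$; what remains is the reverse, namely that every $k$-subset $S$ whose cyclic interval intersections are bounded by the $r_{[i,j]}$ actually gives a nonzero Pl\"ucker coordinate at $x$. The cleanest route is to reduce to a totally positive representative inside the positroid stratum containing $x$ and invoke the characterization of positroid bases via Grassmann necklaces (or equivalently via bounded affine permutations) as proved in \cite{klspos}. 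I expect this to be the main obstacle: the cyclic interval ranks do not formally pin down the rest of the rank function, and recovering the full matroid genuinely requires the combinatorics of the totally nonnegative Grassmannian.

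For (1) $\Rightarrow$ (3), I would realize $X(M)$ as the image of a Richardson variety in $Fl(n)$. A rank condition on a cyclic interval is a Schubert-type condition with respect to one of the $n$ cyclic rotations of the standard flag, and the full list of cyclic interval rank conditions defining $M$ can be repackaged into a single pair of permutations $v \le w$, yielding a Richardson variety $R_{v,w} \subseteq Fl(n)$. Checking $\pi(R_{v,w}) = X(M)$ is then a direct comparison of rank conditions on each side: both impose exactly the prescribed rank on every cyclic interval, and no more. Combined with the previous step, the only substantive work in the entire proposition is concentrated in (2) $\Rightarrow$ (1).
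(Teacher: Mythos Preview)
The paper's own proof is nothing more than two citations: it defers (1)$\Leftrightarrow$(2) to \cite{posschub} and (2)$\Leftrightarrow$(3) to \cite{klspos}. Your proposal is in the same spirit---both treatments acknowledge that the real content lives in the cited literature---but you supply more of the connective tissue by sketching a cycle (3)$\Rightarrow$(2)$\Rightarrow$(1)$\Rightarrow$(3).

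One place where your sketch is thinner than it appears is (1)$\Rightarrow$(3). A Richardson variety in $Fl(n)$ is cut out by Schubert conditions with respect to only \emph{two} flags (the standard one and its opposite), not all $n$ cyclic rotations of the standard flag. The fact that the projection of such a two-flag object to $G(k,n)$ nonetheless imposes the full list of cyclic-interval rank conditions---and no extraneous ones---is exactly the nontrivial content of \cite{klspos}, not a ``direct comparison of rank conditions.'' Since you already plan to invoke that reference, this is not a genuine gap, but your phrasing (``repackaged into a single pair of permutations $v\le w$'') undersells the work being imported.
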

\begin{proof}
For the equivalence of (1) and (2), see \cite{posschub}. For (2) and (3), see \cite{klspos}.
\end{proof}

\begin{defn}
\label{def:positroid}
A matroid satisfying any of the equivalent conditions just listed is called a \defof{positroid}, and its matroid variety is called a \defof{positroid variety}.
\end{defn}

Several nice properties of positroid varieties are described in \cite{projrich}. In particular, they are always reduced, irreducible, and Cohen-Macaulay, and unlike general matroid varieties (see Counterexample \ref{cex:pluckergen}) they are always cut out by Pl\"ucker variables. Positroids are very well-studied already, and there are several different combinatorial gadgets that have been invented to describe them, some of which are described in \cite{klspos}.

Similarly to how we handled interval rank varieties, we can describe a positroid by saying what the rank of every cyclic interval is. We wind up with something analogous to Definition \ref{def:irankmat}.

\begin{defn}
\label{def:posrankmat}
Take a positroid $P$ on $[n]$. We'll think of elements $[n]$ as representatives of equivalence classes of integers mod $n$ with the obvious cyclic order (that is, 1 comes right after $n$). We'll use interval notation with this in mind; for example, if $n=6$, then we'll write $[5,8]=[5,2]=\{5,6,1,2\}$. In particular, $[5,5]=\{5\}$, whereas $[5,11]=[5,10]=[5,4]=\{5,6,1,2,3,4\}$. We can form a \defof{cyclic rank matrix} by setting $r_{ij}=\rk_P([i,j])$ for any integers $i,j$ with $0\le j-i\le n$.
\end{defn}

The conditions in Definiton \ref{def:irankmat} are again necessary and sufficient for a rank matrix to have arisen from this procedure. We can also replicate the essential set machinery in this setting:

\begin{defn}
\label{def:posessential}
We can form an \defof{affine permutation matrix} from our cyclic rank matrix using the same condition we used for interval rank varieties: put a 1 in position $(i,j)$ if $r_{ij}=r_{i,j-1}=r_{i+1,j}\ne r_{i+1,j-1}$ and a 0 otherwise. Unlike in the interval rank case, every row and every column will have exactly one 1.

The \defof{essential set} is also defined exactly as before: cross out all the positions strictly below or to the left of a 1 in the partial permutation matrix, and take the positions which are at the upper-right corners of their connected components.
\end{defn}

By convention, we don't consider positions on the very upper-right edge of the matrix (that is, ones where $j-i=n$) to be essential. Again, imposing the rank conditions corresponding to the essential intervals are enough to define a positroid variety in $G(k,n)$ as a scheme.

\begin{exmp}
\label{ex:positroid}
The positroid of rank 3 on $[6]$ generated by forcing $[1,3]$, $[3,5]$, and $[5,1]$ to have rank 2 has the following cyclic rank matrix:

\[
\begin{array}{cccccccccccc}
1&2&\underline2&3&3&3&3& & & & &\\
 &1&2&3&3&\underline3&3&3& & & &\\
 & &1&2&\underline2&3&3&3&3& & &\\
 & & &1&2&3&3&\underline3&3&3& &\\
 & & & &1&2&\underline2&3&3&3&3&\\
 & & & & &1&2&3&3&\underline3&3&3\\
\end{array}
\]

We think of the matrix as repeating indefinitely in the northwest and southeast directions. So, for example, the 3 printed in the fourth row and fourth nonempty column indicates that $[4,7]=[4,1]=\{4,5,6,1\}$ has rank 3. The affine permutation corresponding to this rank matrix is the one with 1's in the spots marked in underline. We'll sometimes write affine permutations as functions, listing the images of each element of $[n]$ in order. So, for example, this one is $3,6,5,8,7,10$.
\end{exmp}

As we have already mentioned, finding the cohomology class of an arbitrary matroid variety is probably an impossible task. But given how nice positroid varieties are, it seems much more reasonable that there might be a nice way to describe their classes. There is a sense in which this has already been done in \cite{klspos}: the authors of that paper give a procedure which takes a positroid $P$ and outputs a symmetric function which represents the class of $X(P)$ in the cohomology ring of $G(k,n)$. But the symmetric function they give (called the ``affine Stanley symmetric function'') is not always a nonnegative linear combination of Schur functions. Instead, when expanded in the Schur basis all the minus signs happen to appear only in front of Schur functions which map to zero in $H^*(G(k,n))$.

It would be good to instead have a ``positive'' rule, like we had for interval rank varieties, that is, a rule that takes in a positroid and simply outputs the coefficients of the Schuberts in its cohomology class without having to go through the computationally opaque step of finding a representative for a symmetric function modulo an ideal.

Perhaps we could proceed in a way similar to our strategy for interval rank varieties: find a shift that always works, do it to get a union of positroid varieties, and continue until we have only Schuberts. The rest of this paper is dedicated to exploring the extent to which this plan can be successful.

\chapter{Complications with Shifting Positroids}
\section{The Square Positroid}

Note that the choice of which sequence of shifts to perform is much less clear in this case. Since the elements of an interval rank matroid had a natural linear order, it made sense to start at one end and perform the shortest and rightmost possible shifts in order until the end. In a positroid, of course, there is no ``rightmost,'' so any such choice would be more arbitrary than in the interval rank case.

At any rate, it doesn't matter: there is a positroid variety in $G(3,8)$, which will be defined in a moment, for which no nontrivial shift is a reduced union of positroid varieties. So if we are going to use geometric shifts to find the class of a positroid variety, it won't be as easy as it was for interval rank varieties.

\begin{figure}[t]
\begin{center}\includegraphics[width=6cm]{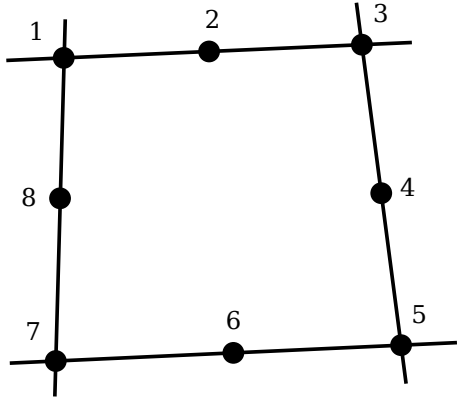}\end{center}
\caption{A projective model of the ``square positroid.''}
\label{fig:squarepos}
\end{figure}

\begin{cex}
\label{cex:squarepos}
The \defof{square positroid} is a rank-3 positroid $S$ on $[8]$. Its essential intervals are $[1,3]$, $[3,5]$, $[5,7]$, and $[7,1]$, each of which has rank 2. A picture of a projective model of $S$ is shown in Figure \ref{fig:squarepos}.

The picture makes it clear that this matroid has $D_4$ symmetry. Table \ref{tab:shiftsquare} is a catalog of the results of performing every possible geometric shift on $X(S)$ up to symmetry. Each entry in the table is a list of the irreducible components of the corresponding shift. In this table and in the other similar tables later in this paper, we will describe a matroid by listing the nonredundant conditions on its rank function; each row of the table corresponds to one connected component. We will write, for example $(123)_2$ to mean that the set $\{1,2,3\}$ has rank 2. The rank of any set not listed is understood to be the largest possible number that doesn't violate the condition $\rk(F\cup\{x\})=\rk F\mbox{ or }\rk F+1$. In this way, $S$ itself could be described by the conditions \[(123)_2\ (345)_2\ (567)_2\ (781)_2.\]
\begin{table}[t]
\label{tab:shiftsquare}
\begin{center}
\begin{tabular}{c|r}
\textbf{shift}&\textbf{result}\\\hline
$1\to 2$&no change\\\hline
$1\to 3$&$(123)_2\ (145)_2\ (567)_2\ (781)_2$\\\hline
$1\to 4$&$(1235)_2\ (567)_2\ (781)_2$\\
&$(13)_1\ (567)_2\ (781)_2\ (783)_2$\\
&$(1)_0\ (567)_2$\\\hline
$1\to 5$&$(1234)_2\ (6781)_2$\\
&$(13)_1\ (6781)_2\ (6783)_2$\\
&$(17)_1\ (1234)_2\ (7234)_2$\\
&$(137)_1$\\
&not a matroid variety, described in the text\\\hline
$2\to 3$&$(123)_2\ (245)_2\ (567)_2\ (781)_2$\\\hline
$2\to 4$&$(1235)_2\ (567)_2\ (781)_2$\\
&$(23)_1\ (567)_2\ (781)_2$\\\hline
$2\to 5$&$(1234)_2\ (267)_2\ (781)_2$\\
&$(23)_1\ (267)_2\ (367)_2\ (781)_2$\\
&$(2)_0\ (781)_2$\\\hline
$2\to 6$&$(123)_2\ (257)_2\ (345)_2\ (781)_2$
\end{tabular}
\end{center}
\caption{The results of the possible geometric shifts of $X(S)$ up to symmetry.}
\end{table}
Some features of the table are worth pointing out explicitly. First, except for the shift $1\to 2$, which does nothing, for none of the shifts is it the case that every irreducible component is a positroid variety. This means that the naive goal of just replacing the words ``interval rank variety'' with ``positroid variety'' everywhere in Section \ref{sec:irank} and hoping to find the right shift order cannot possibly work --- there is no way to stay entirely inside the world of positroid varieties using geometric shifts.

Also worth discussing is the fifth component of the shift $1\to 5$ in the table, which we'll call $B$. As indicated there, $B$ is not a matroid variety. Its ideal in the coordinate ring of the Grassmannian is generated by all the Pl\"ucker coordinates that contain 1 (which exactly forces 1 to be a loop) along with the single cubic binomial \[p_{234}p_{367}p_{578}-p_{235}p_{347}p_{678}.\] This ideal is prime, and the matroid of a generic point of $B$ is simply the matroid in which 1 is a loop.
\end{cex}

The preceding counterexample does much to dash our hopes of a geometric-shift-based way to find the cohomology class of a positroid variety. However, at least in rank 3, all is not lost. Look at the entry in Table \ref{tab:shiftsquare} for the shift $2\to 4$. The second component listed is definitely a positroid variety, since all the conditions are on cyclic intervals. The first component is not: one of the conditions is on the set $\{1,2,3,5\}$. But notice that the number 4 doesn't appear in that section of the table; there are no conditions placed on that element of the matroid at all. And if 4 is deleted from the matroid, $\{1,2,3,5\}$ becomes a cyclic interval and we have a positroid again.

So while the first component of the shift $2\to 4$ isn't a positroid variety, it is the variety of a free extension of a positroid (see Definition \ref{def:freeext}). From a certain perspective, it's not so strange that it worked out this way. In the original description of $S$, 4 appears in only one of the essential rank conditions --- the one on $\{3,4,5\}$. This is exactly the sort of situation that made us happy when we were working with interval rank varieties.

\section{Partial Success in Rank 3}
In fact, for positroids in rank 3 we can always arrange for this to happen. Since we will be making use of free extensions and Proposition \ref{prop:freeext}, it will be simpler to work in the equivariant cohomology ring $H^*_{GL_3}(\Mat_{3\times n})$ for this computation. The geometric shift is defined in exactly the same way this context, and we can extract results about $H^*(G(3,n))$ by restricting classes inside $\Mat_{3\times n}$ to the open subvariety $\Mat_{3\times n}^\circ$ consisting of full-rank matrices, and using the fact that \[H^*_{GL_3}(\Mat_{3\times n}^\circ)\cong H^*(\Mat_{3\times n}^\circ/GL_3)=H^*(G(3,n)).\]

\begin{thm}
\label{thm:rank3pos}
If $P$ is a positroid of rank 3, there is a geometric shift we can apply to $V(P)$ which, up to cohomology, results in a reduced union of positroid varieties and free extensions of positroid varieties.
\end{thm}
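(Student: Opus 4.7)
The plan is to make a combinatorial choice of geometric shift based on the essential cyclic intervals of $P$, and then use a positroid version of Lemma \ref{lem:irank} to control what the shift produces. After splitting off any loops and coloops as direct summands via Proposition \ref{prop:directsum} and Corollary \ref{cor:loopcoloop}, and absorbing nontrivial parallel classes in the same way, I may assume that $P$ has no loops, coloops, or parallel elements, so every essential cyclic interval $I_p$ is a proper cyclic subset of $[n]$ carrying a rank condition of $1$ or $2$.

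The first key combinatorial step is to locate an element $j \in [n]$ that lies in exactly one essential cyclic interval $I_p$, together with an element $i$ just outside $I_p$ in the cyclic order. Existence of such a $j$ in rank 3 should be extracted from the affine permutation picture of Definition \ref{def:posessential}: the corners of the diagram can be arranged so that the interior elements of each essential interval lie in no other essential interval. If no such $j$ exists, the essential intervals must cover $[n]$ in a very constrained way that reduces $P$ to an interval rank variety, which is then handled by the procedure of Chapter \ref{sec:irank}.

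With $i$ and $j$ fixed, Proposition \ref{prop:shiftsingle} leaves every essential condition unchanged except the one on $I_p$, which transforms into a rank condition on $I_p - \{j\} \cup \{i\}$. A positroid analog of Lemma \ref{lem:irank} then gives a scheme-theoretic decomposition $\Sha_{i \to j} V(P) = V(P^0) \cap \Sha_{i \to j} V(P_p)$, where $P^0$ carries all essential conditions except the one on $I_p$ and $P_p$ carries only that one. Because $j$ lay only in $I_p$, it is now unconstrained in $V(P^0)$. If $I_p - \{j\} \cup \{i\}$ remains a cyclic interval of $[n]$, the generic component is a positroid; otherwise it is a cyclic interval of $[n] - \{j\}$, and the generic component is a free extension of a positroid by $j$, matching the shape of the output predicted in the first row of the shift $2 \to 4$ for the square positroid in Table \ref{tab:shiftsquare}.

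The main obstacle is controlling the other irreducible components of the intersection. One expects, in analogy with Proposition \ref{prop:irankscheme}, that any extra rank drop forced by the intersection can be absorbed in one of two ways: either by tightening a neighboring essential condition into a new cyclic-interval condition (as in the second row of the $2 \to 4$ entry of Table \ref{tab:shiftsquare}), or by forcing $j$ to be a loop or parallel to a neighbor and then splitting off the resulting connected component via Proposition \ref{prop:directsum}. The heart of the argument is an exhaustive classification of how the essential set can degenerate under a single-entry shift in rank 3, showing that every such degeneration lands in one of these two families. The ``up to cohomology'' clause in the statement provides crucial slack here: lower-dimensional or embedded components can be discarded without affecting the cohomology class, so it suffices to identify the top-dimensional components of the shift and verify each is of the desired form, and this is precisely the step that is expected to fail once we leave rank 3.
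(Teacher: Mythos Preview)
Your outline has the right shape at the coarsest level --- reduce to a simple configuration, find an element lying in a single essential interval, shift across it, and identify the resulting components --- but two of the load-bearing steps are not actually available in the form you assume.

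First, the reduction. Parallel elements are \emph{not} direct summands, so you cannot ``absorb nontrivial parallel classes'' via Proposition~\ref{prop:directsum} and Corollary~\ref{cor:loopcoloop}. The paper handles parallel elements with a separate lemma (Lemma~\ref{lem:parshift}) showing that a shift that avoids a given point commutes with taking a parallel extension at that point; this is what allows the reduction to the case of no rank-1 essential intervals. Without this, your ``assume no parallel elements'' step is a gap.

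Second, and more seriously, you invoke a ``positroid analog of Lemma~\ref{lem:irank}'' to get a scheme-theoretic identity $\Sha_{i\to j}V(P)=V(P^0)\cap\Sha_{i\to j}V(P_p)$. No such lemma is established here, and the paper does not use one. Instead, the argument runs through transversality: Lemma~\ref{lem:transverse} shows that if the essential conditions of $P$ are split into those supported on one cyclic arc $[u,v]$ and those on the complementary arc, the two resulting varieties intersect generically transversely in $\Mat_{3\times n}$. Together with Lemma~\ref{lem:nobadcomponents} (that $V(P)$ is generically cut out by its essential rank conditions), this lets one compare $\Sha_{i+1\to i-1}(E\cap E'\cap S)$ with $(\Sha_{i+1\to i-1}(E\cap E'))\cap S$ and conclude they agree \emph{in cohomology}, not as schemes. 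The shift of $E\cap E'$ is then computed by the interval rank machinery, giving exactly two explicit components. Your ``exhaustive classification of degenerations'' and appeal to Proposition~\ref{prop:irankscheme} do not substitute for this: the equality you need is only up to cohomology, and it is the transversality that delivers it.
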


Before we prove this, we first analyze the essential set of a positroid $P$ of rank 3. The essential conditions will, of course, be on sets of rank 0, 1, or 2. By repeatedly using Corollary \ref{cor:loopcoloop}, we may reduce to the case where $P$ has no loops, that is, there are no nonempty subsets of rank 0.

So all essential conditions must be on sets of rank 1 or 2. Conditions of rank 1 (that is, conditions which force elements of the matroid to be parallel) put an equivalence relation on $[n]$, and any essential rank-2 interval must be a union of these equivalence classes. This is because if some interval $I$ has rank 2 and some interval $J$ has rank 1, then there are two nonisomorphic ways to resolve this: either $I\cup J$ has rank 2 or $I\cap J$ has rank 0. (One can deduce this formally by repeatedly applying Definition \ref{def:posessential}.) This is a problem unless $I\cap J$ is empty or $J\subseteq I$, that is, $I$ contains the entire equivalence class $J$ or none of it.

Furthermore, the intersection of two essential intervals must have rank at most 1: if $I$ and $J$ have rank 2, and $\rk(I\cap J)>0$, then either $\rk(I\cap J)=1$ or $\rk(I\cup J)=2$. But in the latter case, $I$ and $J$ wouldn't have been essential intervals, because the rank conditions on $I$ and $J$ would be implied by the condition on $I\cup J$. This means that, provided $I$ and $J$ intersect in an interval (which must happen unless $I\cup J=[n]$), $I\cap J$ consists of a single equivalence class.

\begin{figure}[t]
\begin{center}\includegraphics[width=6cm]{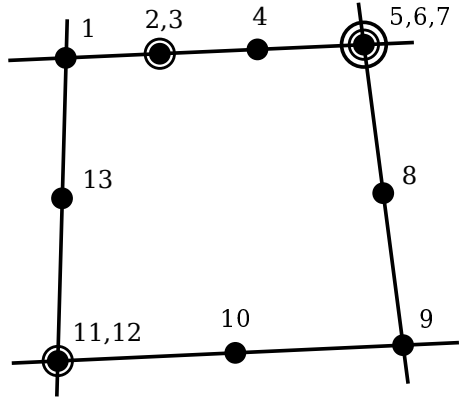}\end{center}
\caption{An example of a ``model polygon'' of a positroid}
\label{fig:modelpoly}
\end{figure}

So we can think of a projective model of $P$ as a polygon, which we will call the \defof{model polygon}, with points drawn at all the vertices and at various points along the edges. Each edge of the polygon is a rank-2 essential interval and each point is a rank-1 essential interval (which may consist of multiple parallel elements of $[n]$). We'll make use of this description throughout the section.

For example, Figure \ref{fig:modelpoly} shows a picture of a model polygon of a positroid. In this example, $n=13$; there are four edges (rank-2 essential intervals): $[1,7]$, $[5,9]$, $[9,12]$, and $[11,1]$; and there are three rank-1 essential intervals: $[2,3]$, $[5,7]$, and $[11,12]$.

We can immediately eliminate the case where there are only two rank-2 essential intervals which intersect on both ends. This happens, for example, in the rank-3 positroid on $[6]$ in which $[1,4]$ and $[4,1]$ are the only essential intervals, each of rank 2. At every point of the corresponding open positroid variety, 1 and 4 are parallel, and something similar happens in general. Suppose $[n]=A\sqcup B\sqcup C\sqcup D$ with $A,B,C,D$ appearing cyclically consecutively. The case we're interested in is where our rank-2 essential sets are, say, $A\cup B\cup C$ and $C\cup D\cup A$. But by the logic from two paragraphs ago, this means that, on the open set $\Mat_{3\times n}^\circ$ of full-rank matrices, $A\cup C$ has rank 1. So we could reorder our points in the order $B,A,C,D$, and our positroid turns out to actually be an interval rank matroid.

To prove Theorem \ref{thm:rank3pos}, we'll need some lemmas:

\begin{lem}
\label{lem:transverse}
Let $u,v\in[n]$. Suppose $M$ is a free extension of a positroid of rank 3 in which all rank conditions are on cyclic intervals in $[u,v]$, and in which $u$ and $v$ are each only in one essential interval, which has rank 2. Let $M'$ be a matroid with the same conditions, except having rank conditions on cyclic intervals in $[v,u]$. Then $V(M)$ and $V(M')$ intersect generically transversely in $\Mat_{3\times n}$.
\end{lem}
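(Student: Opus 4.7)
The plan is to mimic the proof of Proposition \ref{prop:directsum}: exhibit a point $p\in V(M)\cap V(M')$ at which both varieties are smooth and show $T_pV(M)+T_pV(M')=\Mat_{3\times n}$, using that the tangent space to $\Mat_{3\times n}$ at any point is canonically $\Mat_{3\times n}$ itself. What makes this a genuine generalization of Proposition \ref{prop:directsum} is that $M$ and $M'$ are supported on \emph{overlapping} column sets $A:=[u,v]$ and $B:=[v,u]$, which meet in $\{u,v\}$; the real work is understanding the behavior at columns $u$ and $v$.

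Decompose $T_p\Mat_{3\times n}$ column-by-column as $\bigoplus_{i\in[n]}\C^3_i$. Because every essential rank condition of $M$ involves only columns indexed by $A$, and because the columns in $B\setminus\{u,v\}$ are exactly the ``free'' elements of the free extension, perturbations supported on those columns lie automatically in $T_pV(M)$; hence $\bigoplus_{i\in B\setminus\{u,v\}}\C^3_i\subseteq T_pV(M)$, and symmetrically $\bigoplus_{i\in A\setminus\{u,v\}}\C^3_i\subseteq T_pV(M')$. Adding these, the sum $T_pV(M)+T_pV(M')$ already contains every perturbation supported away from columns $u$ and $v$, and the remaining task is to realize arbitrary perturbations of those two columns as well.

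This is where the main obstacle lies. Let $I_u$ be the unique essential interval of $M$ containing $u$, and put $L_u:=\mspan(p_i:i\in I_u\setminus\{u\})$; define $I'_u,L'_u$ analogously from $M'$. Because $I_u$ has rank $2$ at $p$, a first-order perturbation of column $u$ preserves the rank-$2$ condition on $I_u$ precisely when it lies in $L_u$ (in the generic case $\dim L_u=2$; if instead $\dim L_u=1$, a short rank check shows that \emph{every} perturbation of column $u$ preserves rank, which is even better). Hence the column-$u$ perturbations inside $T_pV(M)+T_pV(M')$ contain $L_u+L'_u$, and the key observation is that the index sets $I_u\setminus\{u\}$ and $I'_u\setminus\{u\}$ lie in $A\setminus\{u\}$ and $B\setminus\{u\}$ respectively, so the columns involved are largely disjoint; at a generic $p$ the two $2$-planes $L_u,L'_u\subseteq\C^3$ are distinct and $L_u+L'_u=\C^3$. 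The identical argument at $v$ finishes the verification that $T_pV(M)+T_pV(M')=\Mat_{3\times n}$. The substantive point to check carefully is that $V^\circ(M)\cap V^\circ(M')$ actually contains a point at which all four planes $L_u,L'_u,L_v,L'_v$ are simultaneously in general position; this should follow by constructing a realization in which the columns indexed by $A\setminus\{u,v\}$ and by $B\setminus\{u,v\}$ are chosen independently to realize the two positroid parts, and columns $u,v$ are then placed generically inside the prescribed intersections of $2$-planes.
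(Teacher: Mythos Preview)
Your column-by-column strategy is natural and nearly works, but it has a genuine gap. The single-column perturbations of column $u$ tangent to $V(M)$ are, as you say, exactly $L_u$, so your argument needs $L_u+L'_u=\C^3$ at a generic $p$. Now $p_u\in L_u\cap L'_u$ always (since $|I_u|,|I'_u|\ge 3$ force $I_u\setminus\{u\}$ and $I'_u\setminus\{u\}$ to already have rank~$2$), so you need these two $2$-planes through $p_u$ to be distinct. But the hypotheses of the lemma allow $I_u=[u,v]$ and $I'_u=[v,u]$---that is, each of $M$ and $M'$ has a single rank-$2$ essential interval spanning its entire support. In that case both $L_u$ and $L'_u$ also contain $p_v$, hence $L_u=L'_u=\mspan(p_u,p_v)$ at \emph{every} point of the intersection where $p_u,p_v$ are independent. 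No generic choice of $p$ rescues this; the ``all four planes in general position'' point you ask for simply does not exist here, and single-column perturbations can never fill $\C^3_u$.

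The paper's proof avoids this by not restricting to single-column perturbations. Its Lemma~\ref{lem:transversetangentexistence} shows that $T_pV(M)$ surjects onto the columns indexed by $[v,u]$: any desired perturbation of column $u$ can be realized by some $W\in T_pV(M)$, at the price of also moving columns inside $(u,v)$. That residual perturbation is supported on columns free in $M'$, so it lies in $T_pV(M')$ and can be subtracted off. The construction in Lemma~\ref{lem:transversetangentexistence} explicitly splits into the cases where the essential intervals through $u$ and $v$ are distinct or coincide; the latter is precisely the case your approach misses, and it genuinely requires moving a second column of $I_u$ along with $u$.
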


The proof of this is somewhat involved, and it will be helpful to establish some more lemmas beforehand.

\begin{lem}
\label{lem:transversenonsingular}
Suppose $M$ and $M'$ satisfy the hypotheses of Lemma \ref{lem:transverse}. Then the generic point of $V(M)\cap V(M')$ is a nonsingular point of both $V(M)$ and $V(M')$.
\end{lem}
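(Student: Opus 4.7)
The strategy is to show that $V(M) \cap V(M')$ has a dense open subset sitting inside the smooth locus of both $V(M)$ and $V(M')$, so that the generic point of each top-dimensional component is nonsingular on both.

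First, observe that $M$ is the free extension of the rank-$3$ positroid $P := M|_{[u,v]}$ by the columns indexed by the open arc $(v,u)$, and similarly $M'$ is the free extension of a positroid $P' := M'|_{[v,u]}$. By the argument underlying Proposition \ref{prop:freeext}, $V(M) = \pi^{-1}(V(P))$ where $\pi: \Mat_{3\times n} \to \Mat_{3 \times [u,v]}$ is projection onto the columns in $[u,v]$. Since positroid varieties are irreducible (see \cite{projrich}) and $\pi$ has affine fibers, $V(M)$ is irreducible; likewise $V(M')$. The open positroid variety $V^\circ(P)$ is smooth --- positroid cells are known to be isomorphic to affine spaces, and the matrix version is a $GL_3$-bundle over the cell --- so $U := \pi^{-1}(V^\circ(P))$ is a dense open smooth subset of $V(M)$. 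Similarly define $U' \subseteq V(M')$.

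Next I construct an $X \in U \cap U'$. Fix any two linearly independent vectors $c_u, c_v \in \C^3$. Use the model polygon of $P$: because $u$ and $v$ each lie in a unique rank-$2$ essential interval by hypothesis, the edges of the polygon containing $u$ and $v$ are each free to be any $2$-plane through $c_u$ (resp.\ $c_v$), and the remaining edges (rank-$2$ intervals) and vertex positions may be placed generically in $\mathbb{P}^2$. Placing each remaining column generically on its prescribed edge produces a matrix on columns $[u,v]$ whose matroid is exactly $P$. Independently and symmetrically, build columns indexed by $[v,u]$ realizing $P'$, reusing the same $c_u$ and $c_v$ at the shared positions. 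Piecing these together yields a matrix $X \in V(M) \cap V(M')$ with $X \in U \cap U'$.

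Since smoothness is open and $X$ is smooth on both $V(M)$ and $V(M')$, the generic point of the component of $V(M) \cap V(M')$ through $X$ is smooth on both. Any other top-dimensional component of $V(M) \cap V(M')$ must itself meet $U \cap U'$: if some component were entirely contained in $V(M) \setminus U$, then its image under $\pi$ would lie in the proper closed subset $V(P) \setminus V^\circ(P)$, forcing the restriction of its generic matroid to $[u,v]$ to be a proper specialization of $P$; but the construction above, with perturbed generic data, realizes points with matroid exactly $P$ on $[u,v]$ and $P'$ on $[v,u]$ in every component of the correct dimension. This gives the conclusion.

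The main obstacle is the constructive step of the second paragraph: verifying that any prescribed pair $(c_u, c_v)$ of linearly independent vectors extends to a matrix in $V^\circ(P)$ realizing the model polygon of $P$. This reduces to a combinatorial check on rank-$3$ model polygons, and it succeeds essentially because $u$ and $v$ each sit on exactly one rank-$2$ essential interval, so the corresponding $2$-planes can be rotated freely to contain the prescribed vectors without disturbing the combinatorial type.
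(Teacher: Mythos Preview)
Your approach is genuinely different from the paper's, and the gap lies in your final step. You construct a single point $X \in U \cap U'$, but the lemma concerns the generic point of \emph{every} irreducible component of $V(M)\cap V(M')$. Your sentence ``the construction above, with perturbed generic data, realizes points with matroid exactly $P$ on $[u,v]$ and $P'$ on $[v,u]$ in every component of the correct dimension'' is an assertion, not an argument: nothing you have done rules out a component of $V(M)\cap V(M')$ whose generic point has, say, $\pi$-image lying in the boundary $V(P)\setminus V^\circ(P)$. Perturbing your construction just moves $X$ around inside the component it already lives in; it does not touch other components. (A minor side point: open positroid varieties are smooth, but they are not in general isomorphic to affine spaces; your parenthetical overstates this, though smoothness is all you actually use.)

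The paper avoids this difficulty by proving a stronger smoothness statement for $V(M)$ with a shorter argument. Rather than restricting to the preimage of the open positroid cell, it shows that $V(M)$ is nonsingular at \emph{every} point where columns $u$ and $v$ are nonzero: since $u$ and $v$ each sit in at most two rank-$2$ essential intervals (one from $M$, one from $M'$), the generic point of the intersection has those columns nonzero, and a homogeneity argument (there is an automorphism of $V(M)$ carrying any such point to any other) forces the tangent-space dimension to be constant on that locus. This buys exactly what your argument is missing: smoothness on a locus that manifestly contains the generic point of each component, with no need to exhibit explicit realizations or to control the component structure of the intersection.
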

\begin{proof}
Since $u$ and $v$ each appear in at most two essential intervals, each of rank 2, at a generic point of this intersection, the $u$'th and $v$'th columns of the matrix are nonzero. And the dimension of the tangent space of $V(M)$ is the same at any point at which the $u$'th and $v$'th columns are nonzero, since there is an automorphism interchanging any two such points, so each such point is a nonsingular point of $V(M)$.
\end{proof}

\begin{lem}
\label{lem:transversetangentexistence}
Under the hypotheses of Lemma \ref{lem:transverse}, let $A\in V(M)\cap V(M')$, and pick a tangent vector $Q$ at $A$ in $\Mat_{3\times n}$. Then there is a tangent vector to $V(M)$ which agrees with $Q$ in the columns in $[v,u]$.
\end{lem}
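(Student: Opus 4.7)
The plan is to proceed in two stages. First I would use that $M$ only imposes conditions on columns inside $[u,v]$, so columns of $T$ indexed by $[n] \setminus [u,v]$ can be set equal to the corresponding columns of $Q$ without affecting whether $T$ is a tangent vector of $V(M)$. This reduces the problem to constructing $T$ on the columns of $[u,v]$, with $T_u = Q_u$ and $T_v = Q_v$ prescribed, satisfying the tangent conditions at each essential interval of the underlying positroid $P$. By Lemma \ref{lem:transversenonsingular}, I may take $A$ to be generic, so $V(M)$ is smooth at $A$ and its tangent space admits the usual description.

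Next I would use the rank-$3$ polygon model of $P$ to parametrize tangent vectors along the chain of essential rank-$2$ intervals in $[u,v]$, which I write as $I_u = J_0, J_1, \ldots, J_s = I_v$. Writing $W_{J_\ell} \subset \C^3$ for the column span of $A|_{J_\ell}$, the tangent condition at the rank-$\le 2$ locus on $J_\ell$ is that there is a single linear map $\dot W_{J_\ell} : W_{J_\ell} \to \C^3 / W_{J_\ell}$ for which $T_i \equiv \dot W_{J_\ell}(A_i) \pmod{W_{J_\ell}}$ for every $i \in J_\ell$; the rank-$1$ tangent conditions at the shared vertices follow automatically. I would then choose these linear maps as follows: pick $\dot W_{I_u}$ to satisfy $\dot W_{I_u}(A_u) = Q_u \bmod W_{I_u}$ (possible because $A_u \ne 0$, making the evaluation map $\dot W \mapsto \dot W(A_u)$ from $\mathrm{Hom}(W_{I_u}, \C^3/W_{I_u})$ to $\C^3 / W_{I_u}$ surjective); pick $\dot W_{I_v}$ analogously using $Q_v$; and pick $\dot W_{J_\ell}$ arbitrarily for $1 \le \ell \le s-1$. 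Finally I would fill in the in-subspace components of $T_i$ consistently across edges containing $i$, matching $Q_u$ and $Q_v$ on the endpoint columns; at a shared vertex $K_\ell = J_{\ell-1} \cap J_\ell$ the two edge conditions jointly determine $T_i$ modulo the line $L_\ell = W_{J_{\ell-1}} \cap W_{J_\ell}$, leaving the component along $L_\ell$ free.

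The hard part will be verifying that the prescribed conditions $\dot W_{I_u}(A_u) = Q_u \bmod W_{I_u}$ and $\dot W_{I_v}(A_v) = Q_v \bmod W_{I_v}$ do not conflict with constraints coming from other essential intervals touching $u$ or $v$. The hypothesis that $u$ and $v$ each lie in only one essential interval is exactly what rules out such conflicts: each endpoint participates in exactly one $\dot W$, so the two endpoint conditions act on different $\dot W$'s, each of which retains a $1$-parameter family of solutions after the single equation at the endpoint is imposed.
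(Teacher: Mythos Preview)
Your approach is correct in outline and takes a genuinely different, more abstract route than the paper. You parametrize the tangent space of $V(M)$ via infinitesimal deformations $\dot W_{J_\ell}:W_{J_\ell}\to\C^3/W_{J_\ell}$ of the $2$-planes attached to each edge of the model polygon, then argue that the single endpoint conditions on $\dot W_{I_u}$ and $\dot W_{I_v}$ can be met because $u$ and $v$ each lie in only one edge. The paper instead gives a very concrete construction: it leaves every column outside the two extremal edges $I\ni u$ and $J\ni v$ equal to its value in $A$, picks one anchor column $u'\in I$ with $a_{u'}$ not proportional to $a_u$ (and similarly $v'\in J$), leaves those undeformed as well, and linearly interpolates the remaining columns of $I$ and $J$ from $(w_u,w_{u'})$ and $(w_v,w_{v'})$. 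In your language this is the special choice $\dot W_{J_\ell}=0$ on all intermediate edges, together with the particular $\dot W_I,\dot W_J$ determined by the anchors. What the paper's choice buys is that the verification becomes almost trivial: undeformed columns automatically satisfy every rank condition, and linear interpolation preserves any linear dependence among columns, so there is no separate ``fill in consistently'' step to worry about.

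Two places in your sketch need more care. First, your claim that ``the rank-$1$ tangent conditions at the shared vertices follow automatically'' addresses only the vertex rank-$1$ intervals; rank-$1$ essential intervals in the \emph{interior} of an edge (parallel points that are not corners) impose further constraints on the in-$W_{J_\ell}$ components of $T_i$, and you should say explicitly how those are satisfied when you ``fill in the in-subspace components.'' The paper's linear interpolation handles these for free. Second, your last paragraph assumes $I_u\ne I_v$ when you say ``the two endpoint conditions act on different $\dot W$'s''; the paper treats the single-edge case $I=J$ separately, and you would need to as well, now imposing both endpoint conditions on the same $2$-dimensional space of maps $\dot W_I$.
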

\begin{proof}
First, if one or both of $M$ or $M'$ is a free extension of a positroid, then there is a column which is free in both $V(M)$ and $V(M')$. So it's enough if we can get any tangent vector at $A$ in the projection of the tangent space away from that column, since we can then fix it in any way we want without leaving either $V(M)$ or $V(M')$. So it's enough if we handle the case where $M$ and $M'$ are actually positroids.

We think of $Q$ as a $\C[\epsilon]/(\epsilon^2)$-point of $\Mat_{3\times n}$, that is, a $3\times n$ matrix with entries in $\C[\epsilon]/(\epsilon^2)$. We're looking for another such matrix $W$ which lies in $V(M)(\C[\epsilon]/(\epsilon^2))$. Like $Q$, $W$ should map to $A$ after setting $\epsilon$ to 0, and it should agree with $Q$ on the columns in $[v,u]$.

We will determine the entries of $W$ column by column. For $i\in[n]$, write $q_i$, $w_i$, and $a_i$ for the $i$'th columns of $Q$, $W$, and $A$ respectively. As we mentioned above, at a generic point of the intersection, we can assume that $a_u$ and $a_v$ are nonzero. First, $V(M)$ imposes no conditions on the columns in $[v,u]$ except $u$ and $v$ themselves, so we are free to set those columns of $W$ to whatever we like without leaving $V(M)$. So we simply need to verify that there is a matrix with the right entries in columns $u$ and $v$ that lies in $V(M)$ and still maps to $A$.

Following the ``model polygon'' description of rank-3 positroids from above, let $I$ be the edge (that is, rank-2 essential condition) containing $u$ and let $J$ be the edge containing $v$. Let $u'$ be the rightmost (that is, further from $u$) element of $I$ so that $a_{u'}$ isn't a multiple of $a_u$, or set $u=u'$ if there is no such element. Similarly, either let $v'$ be the leftmost element of $J$ with $a_{v'}$ not a multiple of $a_v$ or set $v=v'$.

If $I\ne J$, then we can construct $W$ by making the columns in $[v,u]$ agree with those of $Q$, as we must, and letting columns $u'$ and $v'$ be as in $A$, that is, have 0 as their coefficient of $\epsilon$. We similarly let any columns outside of $I\cup J$ be as in $A$. By construction, $a_u$ and $a_{u'}$ span the columns in $I$. Given an $x$ with $u<x<u'$, say $a_x=\alpha a_u+\beta a_{u'}$. We set $w_x=\alpha w_u+\beta w_{u'}$. After doing this, we see we have accomplished our goal: all the essential rank conditions defining $V(M)$ have been met by $W$, and by construction, $W$ maps to $A$ after killing $\epsilon$.

Finally, if $I=J$, we can do the same thing, except that since $u'=v$ and $v'=u$, we can't force those columns to agree with $A$. But in this case, the entire interval $[u,v]$ is a single essential rank-2 interval, so we can simply set $w_u$ and $w_v$ equal to $q_u$ and $q_v$ and handle the interior columns as in the preceding paragraph.
\end{proof}

\begin{proof}[Proof of Lemma \ref{lem:transverse}]
Take a generic point of $V(M)\cap V(M')$. By Lemma \ref{lem:transversenonsingular}, this is a nonsingular point of both $V(M)$ and $V(M')$. The tangent space at a point of $\Mat_{3\times n}$ is isomorphic to $\Mat_{3\times n}$, so if $A$ is a generic element of $V(M)\cap V(M')$, it's enough to show that the tangent spaces of $V(M)$ and $V(M')$ at $A$ together span all of $\Mat_{3\times n}$. We'll show that the projection of the tangent space at $A$ of $V(M)$ to the subspace of $\Mat_{3\times n}$ spanned by the columns in $[v,u]$ is surjective. This, together with the symmetrical claim about $V(M')$ and $[u,v]$, is enough to prove the result. Take a tangent vector at our intersection point. By Lemma \ref{lem:transversetangentexistence}, there is a tangent vector to $V(M)$ which agrees with it in the columns in $[v,u]$. But since $V(M')$ imposes no conditions on the columns in $(u,v)$, we can set the columns in $(u,v)$ to whatever we want by adding a vector tangent to $V(M')$ without disturbing anything else.
\end{proof}

\begin{lem}
\label{lem:widecomponents}
For any $GL_3$-invariant equidimensional subvariety $V\subseteq\Mat_{3\times n}$, if $V$ has components contained in $L$, then we claim its equivariant cohomology class when expanded in the Schur basis will contain some terms corresponding to partitions with more than $n-3$ columns.
\end{lem}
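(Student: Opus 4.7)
The plan is to split $V$ according to its $L$-components and show that their contribution to the Schur expansion of $[V]$ cannot be cancelled. Let $V=\bigcup V_i$ be the irreducible decomposition into components of codimension $d=\operatorname{codim} V$, so $[V]=\sum_i m_i [V_i]$ in $\Lambda_3=H^*_{GL_3}(\Mat_{3\times n})$ with multiplicities $m_i\ge 1$. Write $W$ for the union of those $V_i$ contained in $L$ and $V'$ for the union of the rest, so that $[V]=[W]+[V']$.

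I would first show that $[W]$ is a nonzero element of the subspace of $\Lambda_3$ spanned by Schur functions $s_\lambda$ with $\lambda_1>n-3$. Since $W\subseteq L$ is disjoint from $\Mat_{3\times n}^\circ$, its class restricts to zero along $\Lambda_3 \to H^*_{GL_3}(\Mat_{3\times n}^\circ)\cong H^*(G(3,n))$, so $[W]$ lies in the kernel of that restriction; that kernel is spanned exactly by the ``long'' Schurs. Since $W$ is a nonempty pure-dimensional subvariety, $[W]\ne 0$, and its Schur expansion therefore has at least one nonzero long coefficient.

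The remaining step is to rule out the possibility that $[V']$ cancels those long terms. My proposed route is positivity on both sides. For $[W]$: the locus $L$ is the determinantal variety cut out by the $3\times 3$ minors, so by Thom--Porteous its class is $[L]=h_{n-2}$, and any equivariant class supported on $L$ lies in the ideal generated by $h_{n-2}$ in $\Lambda_3$; by Pieri's rule each product $h_{n-2}\cdot s_\mu$ is a nonnegative sum of Schurs whose first part is at least $n-2$, so effectivity of $[W]$ forces it to be a strictly positive combination of long Schurs. A parallel positivity statement for the long part of each $[V_i]$ with $V_i\not\subseteq L$ would complete the argument, since two positive sums cannot cancel.

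The hard part is this second positivity claim. One cannot just invoke a generic symmetric-function representative, because the affine Stanley representative of a positroid class can carry negative coefficients on long Schurs. The positivity has to be established for the specific equivariant cycle class $[V_i]$. My proposed approach would be to degenerate $V_i$---for example by a torus degeneration, or by a carefully chosen sequence of geometric shifts of the kind developed earlier---to a reduced union of matrix Schubert varieties, whose equivariant classes are Schubert polynomials and hence Schur-positive, and then to verify that this positivity is preserved on the long-partition part through the degeneration.
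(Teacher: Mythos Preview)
Your setup is correct up through the point where you establish that $[W]$ is a nonzero element of the span of the long Schurs. The gap is exactly where you identify it: you need that the long-Schur part of $[V']$ is nonnegative so that it cannot cancel the positive contribution from $[W]$, and your proposed route---degenerating each $V_i$ to a union of matrix Schubert varieties---is not carried out and is unlikely to be available for an \emph{arbitrary} $GL_3$-invariant irreducible subvariety. (Your side remark that classes supported on $L$ lie in the ideal $(h_{n-2})$ is also not quite right; support only tells you the class lies in the kernel of restriction, which is the \emph{span} of the long Schurs, not an ideal.)

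The paper's proof supplies the missing idea cleanly: it shows directly that \emph{every} $GL_3$-invariant equidimensional subvariety of $\Mat_{3\times n}$ has Schur-nonnegative equivariant class, via the free-extension trick. Pulling back along the projection $\Mat_{3\times n'}\to\Mat_{3\times n}$ does not change the class in $\Lambda_3$ (this is exactly Proposition~\ref{prop:freeext}), so take $n'$ large enough that every partition of size $\codim V$ with at most three parts has first part $\le n'-3$. Then the restriction $\Lambda_3\to H^*(G(3,n'))$ is injective on the degree-$d$ piece, and the image of $[V]$ is the class of an effective cycle in the Grassmannian, hence Schubert-nonnegative. This gives nonnegativity of \emph{all} Schur coefficients of $[V]$, and the same argument applied to each component separately gives Schur-nonnegativity of $[W]$ and of $[V']$ individually. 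From there your decomposition $[V]=[W]+[V']$ finishes the proof immediately: $[W]$ is a nonzero nonnegative combination of long Schurs, and $[V']$ cannot subtract anything. So the overall architecture you propose is right; the missing ingredient is the free-extension positivity argument, which replaces your speculative degeneration step with a two-line proof.
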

\begin{proof}
First, all the coefficients in the Schur basis are nonnegative: if some $\sigma_\lambda$ appears with a negative coefficient, then free-extend enough times so that $n-3$ is at least the number of columns in $\lambda$. Then removing $L$ and passing to the Grassmannian gives us a negative coefficient in the Schur basis there, which can't happen. So if there is a component of $V$ contained in $L$, then it maps to 0 after passing to the Grassmannian, so its class lies in the kernel of that map, that is, it's a nonnegative sum of terms corresponding to partitions with more than $n-3$ columns.
\end{proof}

\begin{lem}
\label{lem:nobadcomponents}
Suppose the model polygon of $P$ doesn't have two edges which overlap on both ends, and that at least two of the corners don't have any parallel points. Then $V(P)$ is generically equal to the variety cut out in $\Mat_{3\times n}$ by the rank conditions corresponding to its essential intervals.
\end{lem}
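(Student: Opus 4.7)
My plan is to let $W \subseteq \Mat_{3\times n}$ be the scheme-theoretic intersection of the determinantal varieties cut out by the essential rank conditions of $P$, so the inclusion $V(P) \subseteq W$ is automatic. To prove the lemma it then suffices to show that $V(P)$ is an irreducible component of $W$ of maximal dimension, and that every other component of $W$ has strictly smaller dimension.

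First I would pin down $\dim V(P)$ from the combinatorics of the model polygon and compare it to an upper bound for $\codim W$ obtained by summing the contributions $(3-r_I)(|I|-r_I)$ of each essential interval $I$ of rank $r_I$, correcting for overlaps at the corners. The first hypothesis --- that no two edges overlap on both ends --- keeps these overlaps simple, so that each pair of distinct edges meets in at most one corner; this lets the codimension sum be computed exactly and lets me conclude that $V(P)$ is a top-dimensional component of $W$, essentially by the same transverse-intersection style argument as in Lemma \ref{lem:transverse}.

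Next I would classify the other components of $W$. A component distinct from $V(P)$ should arise from an alternative ``resolution'' at some corner: when two rank-$2$ edges $I_1,I_2$ meet at a rank-$1$ corner $C$, the only two options compatible with the rank conditions are the generic one, where $\rk(I_1\cup I_2)=3$, and the degenerate one, where $\rk(C)=0$ and so $C$ becomes a loop. The plan is to show that each degenerate resolution cuts dimension by strictly more than the generic one. The second hypothesis --- that at least two corners have no parallel points --- enters precisely here: at such a corner, turning $C$ into a loop kills an entire column (codimension $3$), which exceeds what is gained by relaxing the generic constraint, so every alternative component is strictly smaller. Lemma \ref{lem:widecomponents} can be used as a sanity check that these extra components, if they existed at the top dimension, would contribute ``wide'' Schur terms that shouldn't appear.

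The main obstacle is the bookkeeping in the second step: one has to enumerate all combinations of degenerate resolutions over the corners of the polygon and verify case by case that none recovers the dimension of $V(P)$. The two hypotheses are exactly what rule out pathological configurations --- of the flavor of Counterexample \ref{cex:pluckergen} --- in which a degenerate resolution could conspire with the global structure of the model polygon to produce a second top-dimensional component. The earlier reduction (two edges cannot overlap on both ends, and the ``parallel corners only'' case was reorganized into an interval rank matroid) is what allows me to assume the hypotheses hold without losing generality.
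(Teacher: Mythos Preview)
Your proposal has a real gap in the second step. You assert that every component of $W$ other than $V(P)$ arises from a ``degenerate resolution'' at some corner, but you give no argument for this, and in fact the dangerous extra components are not of this local type at all. The variety $W$ is defined in $\Mat_{3\times n}$, not in the Grassmannian, so the locus $L$ of matrices of rank $\le 2$ sits inside $W$ automatically (every rank-$2$ condition is vacuous there), and components of $W$ contained in $L$ are not captured by any corner-by-corner analysis. Your dimension count for the corner degenerations, even if made precise, does not touch these.

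The paper's argument is organized around exactly this observation. Since positroid varieties are cut out by Pl\"ucker coordinates in $G(3,n)$, one already knows $W\cap\Mat_{3\times n}^\circ=V(P)\cap\Mat_{3\times n}^\circ$; the entire content of the lemma is that $W$ has no components inside $L$. To prove that, the paper sandwiches $V(P)\subseteq W\subseteq B$, where $B$ is a carefully chosen generically transverse intersection of Schubert-type pieces, shows all three are equidimensional of the same dimension, and then computes $[B]$ explicitly as a product of Schur classes. Lemma \ref{lem:widecomponents} is not a sanity check here --- it is the engine: the Pieri-rule bound on the number of columns in each Schur term of $[B]$ is what rules out components in $L$. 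You also misread the role of the two hypotheses. The ``no two edges overlapping on both ends'' hypothesis is what guarantees at least three edges, which is exactly the inequality needed in the column count; the ``two clean corners'' hypothesis is what allows $W$ itself to be written, via Lemma \ref{lem:transverse}, as a generically transverse intersection of two interval rank varieties, giving the equidimensionality of $W$ that the sandwich argument needs. Neither is about bounding the cost of turning a corner into a loop.
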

\begin{proof}
Let $Y(P)$ be the subvariety of $\Mat_{3\times n}$ cut out by the rank conditions corresponding to the essential intervals of $P$. Since we know $Y(P)=X(P)$ in $G(3,n)$, and therefore in $\Mat_{3\times n}^\circ$, it's enough to check that $Y(P)$ has no components contained in $L:=\Mat_{3\times n}-\Mat_{3\times n}^\circ$.

Let $E_1,\ldots,E_k$ be the subvarieties corresponding to each of the rank conditions on the edges of the model polygon, except with only one element of each rank-1 essential interval included. In the example in Figure \ref{fig:modelpoly}, $k$ would be 4, and the corresponding sets could be $\{1,2,4,5\}$, $\{7,8,9\}$, $\{9,10,11\}$, and $\{12,13,1\}$. Let $A_1,\ldots,A_l$ be the ones corresponding to the rank-1 essential conditions. In the example, $l=3$ and the sets are $\{2,3\}$, $\{5,6,7\}$, and $\{11,12\}$.

We know from Lemma \ref{lem:transverse} that the intersection $E_1\cap\cdots\cap E_k$ is generically transverse, and we claim the same is true of each $(E_1\cap\cdots\cap E_k\cap A_1\cap\cdots A_{i-1})\cap A_i$. To see this, suppose $A_i$ is forcing $x_1,x_2,\ldots,x_r$ to be parallel, and that $x_1$ is the one with a condition imposed on it by the $E$'s. Then at any point of the intersection, columns $x_2,\ldots,x_r$ are free in the tangent space to $E_1\cap\cdots\cap E_k\cap A_1\cap\cdots A_{i-1}$, all the other columns are free to vary in the tangent space to $A_i$, so together their tangent spaces span.

Write \[B=E_1\cap\cdots\cap E_k\cap A_1\cap\cdots\cap A_l.\] We then have $V(P)\subseteq Y(P)\subseteq B$, and we claim that they are all equidimensional of the same dimension. The fact that $\dim V(P)=\dim B$ follows either from direct computation or from the expected codimension machinery developed in the next section. We know that $V(P)$ is equidimensional (in fact, irreducible) from the positroid machinery, and $B$ is equidimensional because it's the generically transverse intersection of Schuberts. Finally, the hypothesis on the corners of the model polygon of $P$ allow us to conclude via Lemma \ref{lem:transverse} that $Y(P)$ is the generically transverse intersection of two interval rank varieties.

So it is sufficient to show $B$ has no components contained in $L$. By Lemma \ref{lem:widecomponents}, this is the same as showing that no terms of $B$ correspond to partitions with more than $n-3$ columns. But we can compute the class of $B$ directly. From its description as a transverse intersection, its class is \[\sigma_{e_1-2}\cdots\sigma_{e_k-2}\sigma_{(a_1-1)^2}\cdots\sigma_{(a_l-1)^2},\] where $e_i$ is the number of points on edge $i$ of the model polygon, and $a_i$ is the number of points in the $i$'th rank-1 essential interval. By the Pieri rule, the number of columns in each term of this is at most \[\sum e_i+\sum a_j-2\#(\mbox{edges})-\#(\mbox{rank-1 intervals}).\] Some simple counting shows that \[n=\sum e_i+\sum a_j-\#(\mbox{rank-1 intervals})-\#(\mbox{corners}).\] If $P$ is an interval rank matroid, this follows directly from that theory, so we may assume that the model polygon of $P$ is closed, with as many corners as edges. Since we've eliminated the case of two edges by hypothesis, the number of edges is at least 3. All together, this says that when we expand the class of $B$ in the Schur basis, each term corresponds to a partition with at most $n-3$ columns, which gives the result.
\end{proof}

\begin{lem}
\label{lem:parshift}
For any matroid $M$ on $[n]$ in which the point $k$ is not a loop, write $M^k$ for the matroid on $[n+1]$ obtained by taking the free extension of $M$ in which the new element is $n+1$ and adding the condition that $n+1$ is parallel to $k$. (We call this a \defof{parallel extension} of $M$.) Fix $i,j\neq k$. Suppose that $\Sha_{i\to j}V(M)=\bigcup_rV(N_r)$ for some matroids $N_r$. Then $k$ is not a loop in any $N_r$, and \[\Sha_{i\to j}V(M^k)=\bigcup_rV((N_r)^k).\]
\end{lem}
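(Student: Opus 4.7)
The plan is to exhibit the parallel extension as a morphism that commutes with the shift action, reducing the claim to what we already know about $\Sha_{i\to j}V(M)$. Define $\phi\colon \Mat_{3\times n}\times\A^1 \to \Mat_{3\times(n+1)}$ by sending $(A,\lambda)$ to the matrix whose first $n$ columns agree with $A$ and whose last column is $\lambda a_k$, where $a_k$ is the $k$-th column of $A$. For any matroid $N$ on $[n]$ in which $k$ is not a loop, $\phi$ restricts to a bijection $V^\circ(N)\times(\A^1\setminus\{0\})\to V^\circ(N^k)$, so passing to closures gives $V(N^k)=\overline{\phi(V(N)\times\A^1)}$.

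The key observation is that $\phi$ is equivariant with respect to the one-parameter subgroup $\exp(te_{ij})$ acting by column operations, provided $i,j\neq k$ (and automatically $i,j\neq n+1$): the shift affects only column $j$, leaving both column $k$ and column $n+1$ fixed, so if the $(n+1)$-th column is a scalar multiple of the $k$-th column, it remains so after the shift. Thus the family over $\mathbb{P}^1$ whose special fiber defines $\Sha_{i\to j}V(M^k)$ is obtained by applying $\phi$ fiberwise to the analogous family for $V(M)$. Taking fibers over $\infty$ and using continuity of $\phi$ yields $\Sha_{i\to j}V(M^k) = \overline{\phi(\Sha_{i\to j}V(M)\times\A^1)} = \bigcup_r \overline{\phi(V(N_r)\times\A^1)}$.

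To complete the proof, $k$ must be shown not to be a loop in any $N_r$, so that each $\overline{\phi(V(N_r)\times\A^1)}$ coincides with $V((N_r)^k)$ by the first paragraph. Since $k\neq i,j$, the closed subvariety $Z=\{A: a_k=0\}$ of $\Mat_{3\times n}$ is pointwise fixed by the $\mathbb{G}_m$-action $\exp(te_{ij})$; and $V(M)\not\subseteq Z$ (nor is any irreducible component of $V(M)$, because $V^\circ(M)$ is dense in $V(M)$ and lies outside $Z$ by hypothesis). By the flatness of the shift degeneration and a dimension count --- no irreducible component of the special fiber $\Sha_{i\to j}V(M)$ can be contained in $Z$, since that would force some $V(N_r)\subseteq Z$ to be a component of $\Sha_{i\to j}V(M)\cap Z$, whose dimension is strictly less than that of $V(N_r)$ --- we conclude $k$ is not a loop in any $N_r$. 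I expect the main obstacle to be making the shift--extension commutation rigorous at the level of schemes; set-theoretically the equivariance $\exp(te_{ij})\cdot\phi(A,\lambda) = \phi(\exp(te_{ij})\cdot A,\lambda)$ is immediate, but extending this to flat limits requires careful tracking of closures in $\Mat_{3\times(n+1)}\times\mathbb{P}^1$ and using that $\phi$ is a closed immersion on the open locus where $a_k\neq 0$.
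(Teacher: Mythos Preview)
Your approach is essentially the paper's: both exploit that the one-parameter subgroup $\exp(te_{ij})$ fixes column $k$ when $i,j\neq k$, and both use the same dimension argument to rule out $k$ becoming a loop. The difference is in how the parallel extension is modeled. You parametrize it via the map $\phi:\Mat_{3\times n}\times\A^1\to\Mat_{3\times(n+1)}$ and then must track closures of images through the degeneration---which is precisely the difficulty you flag at the end, since $\phi$ is not proper and collapses over the locus $a_k=0$. The paper sidesteps this by writing the operation as $V^k := \pi^{-1}(V)\cap V(A)$, where $\pi$ drops the last column and $A$ is the rank-$3$ matroid on $[n+1]$ with $k\parallel n+1$ and no other conditions. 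Because $V(A)$ is closed and its defining equations (the $2\times 2$ minors of columns $k$ and $n+1$) are invariant under $f\mapsto f(\exp(-te_{ij})\,\cdot\,)$, one gets $\Sigma^\circ_{i\to j}(V^k)=(\Sigma^\circ_{i\to j}V)^k$ on the nose, and passing to closures is automatic. This dissolves the obstacle you anticipated: pullback and intersection with a fixed closed subvariety commute with closure, whereas image under a non-proper map need not.

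For the no-loop claim, your assertion that $\dim(\Sha_{i\to j}V(M)\cap Z)<\dim V(N_r)$ is missing a step. Flatness alone gives $\dim V(N_r)=\dim V(M)$, but says nothing about $\Sha_{i\to j}V(M)\cap Z$. The paper fills this in by using the shift-invariance of $Z$ once more: any component $C\subseteq Z$ then lies in $\Sha_{i\to j}(V(M)\cap Z)$, whose dimension equals $\dim(V(M)\cap Z)<\dim V(M)=\dim C$, a contradiction.
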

\begin{proof}
For a subvariety $V\subseteq\Mat_{3\times n}$, write $\Sigma_{i\to j}^\circ V$ for the variety \[\{(\exp(te_{ij})v,v):v\in V,t\in\A^1\}\] in $\Mat_{k\times n}\times\A^1$, and write $\Sigma_{i\to j}V$ for its closure in $\Mat_{3\times n}\P^1$. Then by definition $\Sha_{i\to j}V$ is the fiber over $\infty$ of $\Sigma_{i\to j}V$.

Let $\pi:\Mat_{3\times(n+1)}\to\Mat_{3\times n}$ be the map that drops the $(n+1)$'st column, and let $A$ be the rank-$3$ matroid on $[n+1]$ in which $k$ and $n+1$ are parallel with no other conditions. For any subvariety $V\subseteq\Mat_{3\times n}$, we can define $V^k\subseteq\Mat_{3\times (n+1)}$ to be $\pi^{-1}(V)\cap V(A)$. We can define the same operation on subvarieties of $\Mat_{3\times n}\times\A^1$ or $\Mat_{3\times n}\times\P^1$ by just working on the first component. Then away from the locus where $k$ is a loop, $V(M^k)=V(M)^k$. Furthermore, \[\Sigma_{i\to j}^\circ (V^k)=(\Sigma_{i\to j}^\circ V)^k\] for any $V$: applying $\Sigma_{i\to j}^\circ$ replaces an equation $f(M)=0$ by the equation $f(\exp(-te_{ij})M)=0$, but because $i,j\ne k$, the equations defining $V(A)$ are invariant under this operation. Therefore, we can conclude that their closures are equal as well: \[\Sigma_{i\to j}(V^k)=(\Sigma_{i\to j}V)^k.\]

We claim that none of the components of $\Sha_{i\to j}V(M)$ or $\Sha_{i\to j}V(M^k)$ live inside the locus where $k$ is a loop. If there was such a component, say $C\subseteq \Sha_{i\to j}V(M)$, then consider the inclusion $\Mat_{3\times(n-1)}\inj\Mat_{3\times n}$ and write $L$ for its image. Then in fact $C$ lies in $\Sha_{i\to j}(V(M)\cap L)$: as in the last paragraph, $\Sigma_{i\to j}^\circ(V(M)\cap L)=(\Sigma_{i\to j}^\circ V(M))\cap(L\times\P^1)$ because the equations defining $L$ are invariant under the operation $f(M)\mapsto f(\exp(-te_{ij})M)$. But this is a contradiction: the dimension of each component of $\Sha_{i\to j}(V(M)\cap L)$ is equal to the dimension of $V(M)\cap L$, which is strictly less than $\dim V(M)=\dim C$.

So it's enough to show that $\Sha_{i\to j}(V(M)^k)=(\Sha_{i\to j}V(M))^k$, since we can then remove all the components inside the locus where $k$ is a loop to get our result. But this is clear: $\Sha_{i\to j}(V(M)^k)$ is cut out in $\Mat_{3\times(n+1)}\times\P^1$ by the equations defining $\Sigma_{i\to j}(V(M)^k)$ and $t=\infty$, but by the result from the preceding paragraph, these is the same as the equations defining $\Sigma_{i\to j}V(M)$ and $V(A)$, together with $t=\infty$. But this defines $(\Sha_{i\to j}V(M))^k$.
\end{proof}

\begin{proof}[Proof of Theorem \ref{thm:rank3pos}]
We first eliminate the case where there are parallel elements at points of the model polygon other than corners. Say $i,i+1,\ldots,i+r$ are parallel, and that they correspond to a point on the interior of an edge $A$ of the polygon. Either there are some rank-1 essential intervals between $i+r$ and the next corner, say also containing the edge $B$, or $i+r+1$ is a point on that corner. Either way, we will do the shift $i+r+1\to i$. Let $H$ be the matroid defined by the conditions on $A$ and $B$, and let $H'$ be the matroid defined by the rest of the conditions in $P$. By applying Lemma \ref{lem:parshift}, we can assume that the two points involved in the shift are the only ones with any parallel elements. Then by Lemma \ref{lem:nobadcomponents}, we know $V(P)=V(H)\cap V(H')$.

Since $\Sha_{i+r+1\to i}$ fixes $V(H')$, we want to be comparing \[X_1=\Sha_{i+r+1\to i}(V(H)\cap V(H'))\] and \[X_2=(\Sha_{i+r+1\to i}V(H))\cap V(H').\] Now, $H$ is an interval rank matroid --- we've already eliminated the case where $A$ and $B$ are the only edges of the polygon and they meet at both endpoints. So we can appeal to the interval rank machinery to conclude that $\Sha_{i+r+1\to i}V(H')$ is again a union interval rank varieties, meaning that the $X_2$ is a union of positroid varieties. So by Lemma \ref{lem:transverse}, we know that both intersections in that containment are generically transverse, so we can conclude that they have the same cohomology class.

So it's enough to understand the case where the only parallel elements are at the intersections of rank-2 essential intervals. And using Lemma \ref{lem:parshift}, as long as none of our shifts involve these corners, it is then enough to understand the case where there are no rank-1 essential intervals at all.

Let $I$ and $J$ be two cyclically adjacent rank-2 essential intervals overlapping in $\{i\}$, say with $J$ occurring to the right of $I$. (Note that if $I$ and $J$ are adjacent and disjoint, then our positroid is actually an interval rank matroid.) There must be at least three elements in $I$ and in $J$, because otherwise the condition that they have rank 2 wouldn't be essential. In the absence of parallel elements, essential intervals are only allowed to overlap in one element, so $J$ is the only essential interval containing $i+1$ and $I$ is the only essential interval containing $i-1$.

Let $E$ be the subvariety of $\Mat_{k\times n}$ defined by the rank conditions on $I$, $E'$ the subvariety defined by the rank conditions on $J$, and $S$ the subvariety defined by all the other conditions. By Lemma \ref{lem:nobadcomponents} again, we know that $V(P)=E\cap E'\cap S$. Since the shift from $i+1$ to $i-1$ fixes $E$ and $S$, we already know that \[\Sha_{i+1\to i-1}(E\cap E'\cap S)\subseteq(\Sha_{i+1\to i-1}E)\cap E'\cap S.\] The intersection $(\Sha_{i+1\to i-1}E)\cap E'$ is equal to $\Sha_{i+1\to i-1}(E\cap E')$, and it's the union of two components: $A$, in which $I\cup J-\{i+1\}$ has rank 2, and $B$, in which $i$ and $i-1$ are parallel, and $I$ and $J-\{i+1\}\cup\{i-1\}$ have rank 2. (This can either be computed directly or by using the interval rank machinery.)

Let $u$ be the left endpoint of $I$ and $v$ be the right endpoint of $J$. By Lemma \ref{lem:nobadcomponents}, the intersections $A\cap S$ and $B\cap S$ are free extensions of positroid varieties --- after removing $i+1$, $A$ is defined by a condition on the interval $[u,v]$ and $B$ is defined by conditions on $[u,i]$, $[i-1,v]$, and $[i,i-1]$. So it will be enough to show that the containment above is actually an equality up to cohomology. In fact, we claim that both $E\cap E'$ and $(\Sha_{i+1\to i-1}E)\cap E'$ intersect $S$ generically transversely, which is enough. This follows from Lemma \ref{lem:transverse}.
\end{proof}

This procedure is quite a bit more complicated than the procedure we saw for interval rank varieties, and it relies critically on our ability to isolate a single element which is in only one essential interval to be the target of our shift. As the next section will show, this situation is special to rank $\le 3$. In general, positroid varieties in rank 4 and above are even less amenable to our geometric shift operation.

\section{The Tripod Positroid and Failure in Rank 4}
Our program for applying geometric shifts to positroid varieties in rank 3 relied on being able to find some pair of elements $i,j\in[n]$ for which there was exactly one essential interval containing $j$ but not $i$. We found that, with some difficulty, we can always arrange for this to be the case in rank 3. In rank 4, though, we will not be so lucky:

\begin{figure}[t]
\begin{center}\includegraphics[width=6cm]{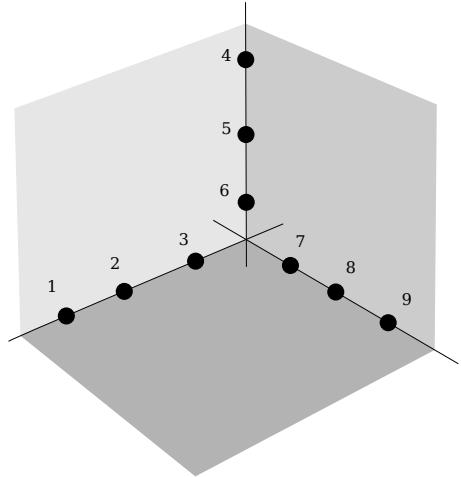}\end{center}
\caption{A projective model of the tripod positroid.}
\label{fig:tripodpos}
\end{figure}
\begin{cex}
\label{cex:tripodpos}
The \defof{tripod positroid} is a rank-4 positroid $T$ on $[9]$. Its essential intervals are $[1,6]$, $[4,9]$, and $[7,3]$ with rank 3, and $[1,3]$, $[4,6]$, and $[7,9]$ with rank 2. A projective model of $T$ is shown in Figure \ref{fig:tripodpos}.

It is immediate from the description given in the previous paragraph that there is no pair of elements $i,j$ for which there is exactly one essential interval containing $i$ but not $j$. In fact, there is enough symmetry here that there are only two essentially different shifts: ones involving two elements in the same rank-2 essential interval, and the rest.

The first kind clearly does nothing. To examine the second kind, we can pick any suitable pair of numbers and see what happens. Consider the shift $6\to 9$. We wind up with two components. Using the notation of Counterexample \ref{cex:squarepos}, they are:
\begin{align*}
A:&\quad(123)_2\ (45678)_2\ (12345678)_3\\
B:&\quad(1236)_2\ (456)_2\ (678)_2\ (123456)_3\ (456789)_3\ (678123)_3
\end{align*}
Component $A$ is a positroid variety, but component $B$ is not. In fact, we can say even more: notice that there are three different rank-2 flats in our list that contain 6. Since one can't have three different cyclic intervals which overlap in just one point, we see that the sets in $B$ can't become cyclic intervals even if we reorder the base set. That is, $B$ isn't a positroid variety with respect to any ordering of $[n]$.

Since this was the only shift we could do up to symmetry, we see that there's no way to shift $X(T)$ to get even free extensions of positroids, like we had in rank 3. It's possible, though it seems unlikely, that $B$ belongs to some class of matroid varieties larger than positroid varieties for which it's still possible to concretely describe the effects of geometric shifts.
\end{cex}

\chapter{Expected Codimension of Matroid Varieties}
Matroid varieties can be very ugly in general. A good start toward understanding the behavior of a matroid variety would be to find some way to compute its dimension directly from the matroid that defines it. Even this has very little hope of succeeding. Matroid varieties are under no obligation to be irreducible, equidimensional, or even generically reduced (when given the natural scheme structure), and, as mentioned before, even the problem of determining whether a given matroid variety is empty or not is NP-hard (\cite{stretch}).

Still, one can come up with an estimate of the codimension of a matroid variety inside its Grassmannian by keeping careful track of the conditions imposed by the vanishing of Pl\"ucker coordinates on the columns of the $k\times n$ matrix defining a point on $G(k,n)$. This section is about a way to make this idea precise, producing a number called the \defof{expected codimension} for each matroid. While it doesn't always produce the actual codimension of the matroid variety, we will prove that it always does for positroids.

The procedure we follow was described quite informally in \cite[3.3]{fnrmat}, and we flesh it out here. Consider the matroid $S$ of a point in $G(3,8)$ for which $p_{123}$, $p_{345}$, $p_{567}$ and $p_{178}$ are the only Pl\"ucker variables that vanish. A projective model for $S$ using points in $\P^2$ is shown in Figure \ref{fig:squarepos}.

We estimate the codimension of $X(S)$ in $G(3,8)$ as follows. To build a projective model of $S$ like the one in the figure, we are free to place the odd-numbered points wherever we want. Once we've done this, each even-numbered point is forced to live in the codimension-1 subspace spanned by two of the points we've already placed. So we guess that the codimension of $X(S)$ is $1+1+1+1=4$.

This turns out to be the correct answer for $\codim X(S)$, and we'll see later that the reasoning given is more or less why. One immediate question is whether the result of this procedure depends on the order in which we ``place'' the points. Once we've nailed down exactly what the procedure is, we will see that the answer to this question is no. For now, let's just try a couple more. If they're placed in order starting from the beginning, points 1, 2, 4, 5, and 7 can be put anywhere without restriction. As before, points 6 and 8 are now forced onto codimension-1 subspaces. But point 3 is now forced onto a codimension-2 subspace: it needs to be on the intersection of $\mspan\{1,2\}$ and $\mspan\{4,5\}$. So, adding all the restrictions up, we get $1+1+2=4$. Similarly, we could get ``$2+2$'' by placing the points 1, 2, 4, 5, 6, and 8 freely, and then putting 3 and 5 in last.

We will show that our definition is independent of the order by recasting it in terms of something manifestly order-independent. In $G(k,n)$, specifying exactly which Pl\"ucker coordinates vanish is the same as describing, for each subset of the set of columns, the dimension of its span in $\C^k$; in matroid language, this is called the \defof{rank} of the corresponding subset $[n]$. Our current procedure is to ask, for each element, what constraints are put on that element when it's added in. Instead let's ask, for each subset of the base set of the matroid, what constraints it puts on its elements. For example, in the set $\{1,2,3\}$ in $S$, the third element added in will be forced onto a codimension-1 subspace no matter what the order is; the only thing that matters is that the number of elements of this set is 1 more than its rank.

So it seems like we should add up the numbers $(\#F-\rk F)(k-\rk F)$ for each subset $F$; the first factor is the number of elements which are constrained by $F$ and the second is the codimension of the subspace those elements are constrained to. But this is not quite right: whenever an element belongs to two different such $F$'s, it's going to be counted twice. Sometimes this is desirable, as we saw with point 3 two paragraphs up, but often it will be redundant, as it is for the sets $\{1,2,3\}$ and $\{1,2,3,4\}$ in $S$. We ought to subtract 1 from the number of constrained elements for the larger set to account for the fact that it was already taken care of by the smaller one.

This, finally, takes us to the definition that we'll be using:

\begin{defn}
\label{def:expdim}
Let $M$ be a matroid of a point of $G(k,n)$, and let $\SS\subseteq\mathcal{P}([n])$. For $S\in\SS$, we define \[c(S)=\#S-\rk S,\] and \[a_{\SS}(S)=c(S)-\sum_{\substack{T\in\SS\\T\subsetneq S}}a_{\SS}(T),\quad a_\SS(\varnothing)=0,\] where the sum goes over elements of $\SS$. (Note that this indeed recursively defines $a$ for all elements of $\SS$.) We then define the \defof{expected codimension of $M$ with respect to $\SS$} to be \[\ec_{\SS}(M):=\sum_{S\in\SS}(k-\rk S)a_{\SS}(S).\] The \defof{expected codimension of $M$} is then \[\ec(M):=\ec_{\mathcal{P}(E)}(M).\] (Similarly, we will write $a=a_{\mathcal{P}([n])}$.) We say that $M$ \defof{has expected codimension} if $\ec(M)$ is equal to the codimension of $X(M)$ in $G(k,n)$.
\end{defn}

Allowing $\SS$ to be something other than $\mathcal{P}([n])$ itself might seem strange, but it will turn out to be very helpful. We will show that in many cases $\ec_{\SS}$ will be the same for many different choices of $\SS$ but easier to compute for some choices than for others, and we will be happy to have the flexibility, for both theoretical and practical reasons.

We will show in Theorem \ref{thm:posexpcodim} that positroids have expected codimension. In Section \ref{sec:valuativity} we also discuss \defof{valuativity}, a well-studied property of some numerical invariants of matroids, and show that expected codimension is valuative.

\section{Properties of Expected Codimension}
We now study how $\ec_\SS$ chages for different choices of $\SS$. Throughout this section, $M$ is a matroid of rank $k$ on a set $E$.

First, it will be helpful to write $\ec$ in a more symmetrical way. Thinking of $\SS$ as a poset under containment, write $\mu_{\SS}$ for its M\"obius function. Then the fact that \[c(S)=\sum_{T\subseteq S,\ T\in\SS}a_{\SS}(T)\] tells us that we can write \[a_{\SS}(S)=\sum_{T\in\SS} c(T)\mu_{\SS}(T,S),\] which means that \[\ec_{\SS}(M)=\sum_{S,T\in\SS}c(T)(k-\rk S)\mu_{\SS}(T,S).\] (Note that this is the same as summing over only the pairs $S,T$ with $T\subseteq S$, since if $T\not\subseteq S$, $\mu(T,S)=0$.) From this perspective, it seems natural to define a version of $a_{\SS}$ which splits up the sum the other way, that is, we define \[b_{\SS}(T)=\sum_{S\in\SS}(k-\rk S)\mu_{\SS}(T,S),\] and from here we may clearly write \[\ec_{\SS}(M)=\sum_{T\in\SS}c(T)b_{\SS}(T).\]

A small advantage of singling out $b$ is that it clarifies the behavior of these operations under dualization:

\begin{prop}
If $\SS\subseteq\mathcal{P}(E)$ is some collection of sets, let $\SS'=\{E-S:S\in\SS\}$. Then:
\begin{enumerate}
\item $\ec_{\SS}(M)=\ec_{\SS'}(M^*)$
\item For $S\in\SS$, $a_{\SS}(S)=b_{\SS'}(E-S)$, where the latter is computed in $M^*$.
\end{enumerate}
\end{prop}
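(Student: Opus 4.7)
The plan is to reduce both claims to three small identities describing how rank, size, complementation, and the Möbius function interact with dualization. Writing $k^* = n - k$ for the rank of $M^*$, where $n = \#E$, I would first record: (a) $c_{M^*}(E-T) = k - \rk_M(T)$, obtained directly from the formula $\rk_{M^*}(S) = \#S - k + \rk_M(E\setminus S)$ noted after Definition \ref{def:dual}; (b) $k^* - \rk_{M^*}(E-T) = c_M(T)$, which is the same computation rearranged; and (c) $\mu_{\SS'}(E-S, E-T) = \mu_\SS(T, S)$, since the bijection $S \mapsto E-S$ carries $\SS$ to $\SS'$ while reversing containment, so the two posets are anti-isomorphic and their Möbius functions correspond.

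I would prove (2) first. Unfolding the definition,
\[b_{\SS'}(E-S) = \sum_{U \in \SS'} (k^* - \rk_{M^*}(U))\, \mu_{\SS'}(E-S,\, U),\]
and substituting $U = E-T$ with $T \in \SS$ reindexes this as a sum over $T \in \SS$ with $T \subseteq S$. Applying (b) and (c) to the summand turns it into $c_M(T)\mu_\SS(T, S)$, and the resulting sum is exactly the Möbius-inversion expression for $a_\SS(S)$ that was already recorded in the preceding text, namely $a_\SS(S) = \sum_{T \in \SS} c_M(T)\mu_\SS(T, S)$.

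For (1), I would start from the symmetric expression $\ec_\SS(M) = \sum_{T \in \SS} c_M(T) b_\SS(T)$ and expand $\ec_{\SS'}(M^*)$ from its original definition. Substituting $U = E-T$ yields
\[\ec_{\SS'}(M^*) = \sum_{T \in \SS} (k^* - \rk_{M^*}(E-T))\, a_{\SS'}(E-T).\]
By (b) the first factor is $c_M(T)$, and by part (2) applied to the pair $(M^*, \SS')$ (using $M^{**} = M$ and $(\SS')' = \SS$) the second factor equals $b_\SS(T)$. Both sides thus coincide with $\sum_T c_M(T) b_\SS(T)$.

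The main obstacle is really just bookkeeping: keeping the $k$'s versus $k^*$'s straight, and making sure the complement bijection on the poset $\SS$ really does transport the Möbius function as in (c). Once those three identities are in hand, (2) is a single substitution and (1) follows from (2) together with the two equivalent formulas for $\ec_\SS$ that were derived just before the proposition.
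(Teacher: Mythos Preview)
Your proof is correct and uses essentially the same approach as the paper: the three identities (a), (b), (c) you record are exactly what the paper uses, and the only difference is cosmetic---the paper plugs them into the double-sum formula $\ec_{\SS}=\sum_{S,T}c(T)(k-\rk S)\mu_{\SS}(T,S)$ to get (1) directly and then declares (2) immediate, whereas you prove (2) first and deduce (1) from it.
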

\begin{proof}
The rank of $E-S$ in $M^*$ is $\#(E-S)-k+\rk_MS$. So $c(E-S)$ in $M^*$ is $k-\rk_MS$, and $c(S)$ in $M$ is $k-\rk_{M^*}(E-S)$. So since \[\ec_{\SS}=\sum_{S,T\in\SS}c(T)(k-\rk S)\mu_{\SS}(T,S),\] (1) follows from the fact that $\SS'$ is the opposite poset to $\SS$, and therefore $\mu_{\SS'}(E-S,E-T)=\mu_{\SS}(T,S)$. From this perspective, (2) is also immediate.
\end{proof}

What is the point of going through this? Our immediate goal is to determine the conditions under which the expected codimension can be computed with respect to some set other than $\mathcal{P}(E)$ and still give the same answer. To figure this out, it would be enough to establish a condition for when $\ec_{\SS}(M)=\ec_{\SS-\{Z\}}(M)$ for some set $Z$. In fact, we can do a little better:

\begin{prop}
\label{prop:removechange}
If $\SS\subseteq\mathcal{P}(E)$ and $Z\in\SS$, then
\begin{enumerate}
\item $\ec_{\SS}(M)-\ec_{\SS-\{Z\}}(M)=a_{\SS}(Z)b_{\SS}(Z)$.
\item For $S\in\SS-\{Z\}$, $a_{\SS}(S)-a_{\SS-\{Z\}}(S)=a_{\SS}(Z)\mu_{\SS}(Z,S)$.
\item For $S\in\SS-\{Z\}$, $b_{\SS}(S)-b_{\SS-\{Z\}}(S)=\mu_{\SS}(S,Z)b_{\SS}(Z)$.
\end{enumerate}
\end{prop}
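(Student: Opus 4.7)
The plan is to establish (2) and (3) first by induction on subset containment, and then to deduce (1) from (2) by a short algebraic manipulation. I would work out (2) in detail; (3) is exactly dual because $b_\SS$ satisfies the analogous recurrence $k-\rk T=\sum_{S\in\SS,\,S\supseteq T}b_\SS(S)$, which is just the M\"obius inversion of the defining formula $b_\SS(T)=\sum_{S\in\SS}(k-\rk S)\mu_\SS(T,S)$.

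For (2), split into cases on whether $Z\subseteq S$. When $Z\not\subseteq S$, the defining recurrences for $a_\SS(S)$ and $a_{\SS-\{Z\}}(S)$ involve exactly the same subsets $T\subseteq S$, so induction gives $a_\SS(S)=a_{\SS-\{Z\}}(S)$, in agreement with the claim since $\mu_\SS(Z,S)=0$ whenever $Z\not\subseteq S$. When $Z\subsetneq S$, I would subtract the two defining recurrences, isolate the $T=Z$ term, and apply the inductive hypothesis to the remaining $T\subsetneq S$ with $T\neq Z$, obtaining
\[a_\SS(S)-a_{\SS-\{Z\}}(S)=-a_\SS(Z)-a_\SS(Z)\!\!\sum_{T\in\SS-\{Z\},\,T\subsetneq S}\!\!\mu_\SS(Z,T).\]
The inner sum equals $\sum_{T\in\SS,\,T\subseteq S}\mu_\SS(Z,T)-\mu_\SS(Z,Z)-\mu_\SS(Z,S)$, and the defining relation of the M\"obius function $\sum_{T\in\SS,\,Z\subseteq T\subseteq S}\mu_\SS(Z,T)=\delta_{Z,S}$ together with $Z\ne S$ collapses this to $-1-\mu_\SS(Z,S)$, which reduces the whole expression to $a_\SS(Z)\mu_\SS(Z,S)$.

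For (1), I would use the identity $\ec_\SS(M)=\sum_{S\in\SS}(k-\rk S)a_\SS(S)$. Splitting off the $S=Z$ contribution gives
\[\ec_\SS(M)-\ec_{\SS-\{Z\}}(M)=(k-\rk Z)a_\SS(Z)+\sum_{S\in\SS-\{Z\}}(k-\rk S)\bigl(a_\SS(S)-a_{\SS-\{Z\}}(S)\bigr).\]
Applying (2) turns the second piece into $a_\SS(Z)\sum_{S\in\SS-\{Z\}}(k-\rk S)\mu_\SS(Z,S)$, and absorbing the $S=Z$ term back into the sum (using $\mu_\SS(Z,Z)=1$) reconstitutes $a_\SS(Z)b_\SS(Z)$.

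The main obstacle I anticipate is simply keeping the M\"obius bookkeeping straight: tracking which sets are added or removed as one passes between $\SS$ and $\SS-\{Z\}$, and ensuring the induction is applied only to proper subsets. Beyond that, the identities fall out mechanically from the defining recurrence for $a_\SS$ and its dual for $b_\SS$.
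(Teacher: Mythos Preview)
Your argument is correct, but it follows a different route than the paper's. The paper establishes a single identity about M\"obius functions,
\[
\mu_{\SS}(T,S)-\mu_{\SS-\{Z\}}(T,S)=\mu_{\SS}(T,Z)\,\mu_{\SS}(Z,S),
\]
proved by the chain-counting interpretation of $\mu$: the difference is the alternating sum over chains from $T$ to $S$ that pass through $Z$, and such chains factor. All three parts of the proposition then drop out immediately by substituting this into the closed-form expressions $a_{\SS}(S)=\sum_T c(T)\mu_{\SS}(T,S)$, $b_{\SS}(T)=\sum_S (k-\rk S)\mu_{\SS}(T,S)$, and $\ec_{\SS}=\sum_{S,T}c(T)(k-\rk S)\mu_{\SS}(T,S)$.

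Your approach instead works from the recursive definition of $a_{\SS}$ and proves (2) by induction on containment, dualizes for (3), and then assembles (1) from (2). This is more elementary in that it avoids the chain interpretation of $\mu$ entirely and only uses the defining recursion $\sum_{Z\le T\le S}\mu(Z,T)=\delta_{Z,S}$. The tradeoff is that the paper's argument treats all three parts symmetrically in one stroke and isolates a reusable fact about M\"obius functions of a poset versus a subposet, whereas yours handles the parts sequentially and leans on the specific recursive structure of $a$ and $b$.
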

\begin{proof}
We have \[\ec_{\SS}(M)-\ec_{\SS-\{Z\}}(M)=\sum_{S,T\in\SS}c(T)(k-\rk S)(\mu_{\SS}(T,S)-\mu_{\SS-\{Z\}}(T,S))\]
if we take $\mu_{\SS-\{Z\}}(T,S)$ to be zero if either $T$ or $S$ is equal to $Z$. Recall that the M\"obius function can be defined by setting $\mu_{\SS}(X,Y)$ to be the sum over all chains in $\SS$ connecting $X$ to $Y$ of $(-1)^c$ where $c$ is the length of the chain. So $\mu_{\SS}(T,S)-\mu_{\SS-\{Z\}}(T,S)$ is going to be the alternating sum of lengths of chains in $\SS$ connecting $T$ to $S$ through $Z$; all other chains will appear in both sums and therefore cancel.

Write $q_k(X,Y)$ for the number of length-$k$ chains in $\SS$ connecting $X$ to $Y$. The number of length-$c$ chains connecting $T$ to $S$ through $Z$ is clearly equal to \[\sum_{k=0}^cq_k(T,Z)q_{c-k}(Z,S),\] which means that \[\mu_{\SS}(T,S)-\mu_{\SS-\{Z\}}(T,S)=\sum_c\sum_k(-1)^k(-1)^{c-k}q_k(T,Z)q_{c-k}(Z,S),\] which is just $\mu_{\SS}(T,Z)\mu_{\SS}(Z,S)$.

Therefore our difference of expected codimensions works out to be \[\sum_{S,T\in\SS}c(T)(k-\rk S)\mu_{\SS}(T,Z)\mu_{\SS}(Z,S)=a_{\SS}(Z)b_{\SS}(Z).\]

Dropping in this expression for the difference of M\"obius functions into the earlier expression of $a$, we see that \[a_{\SS}(S)-a_{\SS-\{Z\}}(S)=\sum_Tc(T)\mu_{\SS}(T,Z)\mu_{\SS}(Z,S)=a_{\SS}(Z)\mu_{\SS}(Z,S),\] and again similarly for $b$.
\end{proof}

\begin{cor}
\label{cor:removemnz}
Given $\AA\subseteq\SS$, if $a_{\SS}(A)=0$ for each $A\in\AA$, then $\ec_{\SS-\AA}(M)=\ec_{\SS}(M)$, and similarly with $a$ replaced with $b$.
\end{cor}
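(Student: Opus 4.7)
The plan is to remove the elements of $\AA$ from $\SS$ one at a time and invoke Proposition \ref{prop:removechange} at each step. Fix any enumeration $\AA = \{A_1, A_2, \ldots, A_m\}$, and set $\SS_0 = \SS$, $\SS_i = \SS_{i-1} - \{A_i\}$ for $1 \le i \le m$, so that $\SS_m = \SS - \AA$. I will prove by induction on $i$ that $\ec_{\SS_i}(M) = \ec_{\SS}(M)$ and that $a_{\SS_i}(A_j) = 0$ for all $j > i$.

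First I would handle the inductive step for the $a$-hypothesis. Assuming $a_{\SS_{i-1}}(A_j) = 0$ for all $j \ge i$, Proposition \ref{prop:removechange}(2) applied with $Z = A_i$ gives
\[
a_{\SS_i}(A_j) = a_{\SS_{i-1}}(A_j) - a_{\SS_{i-1}}(A_i)\,\mu_{\SS_{i-1}}(A_i, A_j) = 0 - 0 \cdot \mu_{\SS_{i-1}}(A_i, A_j) = 0
\]
for each $j > i$, preserving the hypothesis. Then by Proposition \ref{prop:removechange}(1),
\[
\ec_{\SS_{i-1}}(M) - \ec_{\SS_i}(M) = a_{\SS_{i-1}}(A_i)\, b_{\SS_{i-1}}(A_i) = 0,
\]
so $\ec$ is unchanged at each step. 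After $m$ steps we arrive at $\SS - \AA$ without changing $\ec$, which proves the first claim.

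The second claim, with $a$ replaced by $b$, is proved identically after substituting Proposition \ref{prop:removechange}(3) for (2): the vanishing of $b_{\SS_{i-1}}(A_i)$ makes the correction $\mu_{\SS_{i-1}}(A_j, A_i)\, b_{\SS_{i-1}}(A_i)$ equal to zero, so $b$ is preserved on the remaining $A_j$'s, and simultaneously the product $a_{\SS_{i-1}}(A_i)\, b_{\SS_{i-1}}(A_i)$ in part (1) again vanishes.

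There is no real obstacle here; the only point requiring care is to verify that the vanishing condition on $\AA$ is not destroyed as we shrink $\SS$, and this is exactly what parts (2) and (3) of the previous proposition are set up to guarantee.
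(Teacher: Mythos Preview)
Your proof is correct and follows essentially the same approach as the paper: remove the elements of $\AA$ one at a time, using part (1) of Proposition~\ref{prop:removechange} to see that $\ec$ is unchanged at each step and part (2) (respectively (3)) to ensure the vanishing hypothesis on the remaining elements of $\AA$ persists. Your version simply spells out the induction more explicitly than the paper does.
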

\begin{proof}
Remove the elements of $\AA$ from $\SS$ one by one. By part 1 of the proposition, removing something for which $a=0$ doesn't change $\ec$, and by part 2, the remaining elements of $\AA$ will still have $a=0$ after some have been removed. The argument is exactly analogous for $b$.
\end{proof}

This result will be a lot more useful if we can find a lot of sets for which $a$ and $b$ are zero. Luckily, we can:

\begin{prop}
\label{prop:disconnected}
Suppose that $S\in\SS$ is disconnected, say $S=\bigoplus_iS_i$. Suppose further that, for each $T\subseteq S$ for which $T\in\SS$, we also have that each connected component of $T$ is in $\SS$. Then $a_\SS(S)=0$.
\end{prop}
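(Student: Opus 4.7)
The plan is to use the additivity of $c$ over direct sums and induct on $|S|$. If $S=\bigoplus_i S_i$ with each $S_i$ nonempty, then $\rk S=\sum_i\rk S_i$ and $\#S=\sum_i\#S_i$, so $c(S)=\sum_i c(S_i)$. The recursion defining $a_\SS$ rearranges to
\[c(T)=\sum_{U\in\SS,\ U\subseteq T}a_\SS(U)\]
for every $T\in\SS$. I would apply this to $S$ and separately to each $S_i$ (which is in $\SS$ by hypothesis, since the $S_i$ are the connected components of $S$), subtract, and cancel the terms indexed by sets $U\in\SS$ that are contained in some single $S_i$. What remains is the identity
\[\sum_{\substack{U\in\SS,\ U\subseteq S\\ U\not\subseteq S_i\text{ for any }i}}a_\SS(U)=0.\]

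Next I would argue that every term in this sum other than $U=S$ vanishes by the inductive hypothesis. The key point is that each such $U$ is itself disconnected as a matroid: since $U\subseteq S=\bigoplus_i S_i$, the matroid restricted to $U$ decomposes as $\bigoplus_i M|(U\cap S_i)$, and if $U$ meets at least two of the $S_i$'s, this is a nontrivial direct sum decomposition. Moreover, the standing hypothesis passes from $S$ down to any such $U$: for $T\in\SS$ with $T\subseteq U$ we have $T\subseteq S$, so the connected components of $T$ lie in $\SS$. Therefore, by strong induction on $|S|$, each $U\subsetneq S$ appearing in the sum satisfies $a_\SS(U)=0$, leaving $a_\SS(S)=0$.

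The base case is immediate: for the smallest disconnected $S\in\SS$ (for instance, $S=\{x\}\oplus\{y\}$ with both $\{x\}$ and $\{y\}$ in $\SS$) the sum above has only the term $U=S$, so $a_\SS(S)=0$ falls out directly.

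The main obstacle is purely bookkeeping. I need to confirm two things carefully: first, that what drops out of the cancellation is genuine \emph{matroidal} disconnectedness of $U$ (the inherited decomposition $U=\bigoplus_i M|(U\cap S_i)$), not just a set partition; and second, that the "components of subsets lie in $\SS$" hypothesis propagates from $S$ to each proper $U$ in the sum, so the induction is actually applicable. Once those two points are checked, the proof is just the single cancellation above.
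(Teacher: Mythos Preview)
Your argument is correct and is essentially the same as the paper's: both use strong induction on $|S|$, the additivity $c(S)=\sum_i c(S_i)$, the identity $c(S_i)=\sum_{U\in\SS,\,U\subseteq S_i}a_\SS(U)$, and the observation that any $U\subsetneq S$ meeting two of the $S_i$ is again disconnected with the hypothesis inherited. The only difference is cosmetic ordering---you subtract first and then invoke induction on the residual sum, whereas the paper invokes induction first to discard the crossing terms and then subtracts.
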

\begin{proof}
Recall that \[a_\SS(S)=c(S)-\sum_{T\subsetneq S}a_{\SS}(T).\] Any $T\subsetneq S$ which intersects more than one of the $S_i$'s is disconnected, so for those sets we may inductively conclude that $a_\SS(T)=0$. We are left only with sets that are contained in one of the $S_i$'s. To handle those, we note that $\sum_{T\subseteq S_i}a_{\SS}(T)=c(S_i)$. So we are left with $a_\SS(S)=c(S)-\sum_ic(S_i)$. It is simple to check that $c$ is additive in direct sums, so this zero.
\end{proof}

Simply by dualizing everything, we get a version of this statement about $b$. Suppose that $S\in\SS$ is such that $M/S$ is disconnected, and that whenever $T\supseteq S$ is in $\SS$, say $M/T=\bigoplus A_i$, we have each $T\cup A_i\in\SS$. Then $b_\SS(S)=0$.

In particular, Proposition \ref{prop:disconnected} and Corollary \ref{cor:removemnz} together imply that, starting with all of $\mathcal{P}(E)$, we can remove any number of disconnected sets, or any number of sets $S$ for which $M/S$ is disconnected, and end up with the same expected codimension, because the extra condition in Proposition \ref{prop:disconnected} will be trivially satisfied. Note that it doesn't say that we can remove sets of both kinds at the same time: Proposition \ref{prop:removechange} tells us that removing sets for which $b=0$ doesn't change values of $b$ for other sets, but values of $a$ can and will change.

First we need a lemma:

\begin{lem}
\label{lem:connectedness}
Suppose that $M$ is connected and that $S\subseteq M$ is connected. Say $M/S=\bigoplus_iA_i$ where each $A_i$ is connected in $M/S$. Then each $A_i\cup S$ is connected in $M$.
\end{lem}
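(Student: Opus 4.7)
The plan is to use the rank-function characterization of connectedness mentioned just after Definition \ref{def:connected}: a matroid $N$ on $E(N)$ is disconnected iff there exists a proper nonempty $T \subsetneq E(N)$ with $\rk T + \rk(E(N) - T) = \rk N$. I will assume toward contradiction that $M|(A_i \cup S)$ has such a separator $T$, decompose $T_1 = T \cap A_i$, $T_2 = T \cap S$, $U_1 = A_i - T_1$, $U_2 = S - T_2$, and derive a contradiction uniformly by combining submodularity with the strict connectedness inequalities for $M|S$ and $M/S|A_i$.

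Apply submodularity to $T_1 \cup T_2$ and $S$, using $(T_1 \cup T_2) \cap S = T_2$ and $(T_1 \cup T_2) \cup S = T_1 \cup S$, and apply it analogously to $U_1 \cup U_2$ and $S$. Adding the two resulting inequalities gives
\[\rk T + \rk\bigl((A_i \cup S) - T\bigr) \;\ge\; \rk(T_1 \cup S) + \rk(U_1 \cup S) + \rk T_2 + \rk U_2 - 2\rk S.\]
A third application of submodularity, to $T_1 \cup S$ and $U_1 \cup S$, yields $\rk(T_1 \cup S) + \rk(U_1 \cup S) \ge \rk(A_i \cup S) + \rk S$; a fourth, to $T_2$ and $U_2$, yields $\rk T_2 + \rk U_2 \ge \rk S$. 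So the right-hand side of the display is always at least $\rk(A_i \cup S)$, matching the assumed equality.

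Now upgrade one of the two weak bounds to a strict one. If $T_1$ is a proper nonempty subset of $A_i$, then $A_i$ being connected in $M/S$ gives $\rk_{M/S}(T_1) + \rk_{M/S}(U_1) \ge \rk_{M/S}(A_i) + 1$, which via $\rk_{M/S}(X) = \rk(X \cup S) - \rk S$ improves the first weak bound by $+1$. If $T_2$ is a proper nonempty subset of $S$, then $M|S$ connected improves the second weak bound by $+1$. In either case the display upgrades to $\rk T + \rk((A_i \cup S) - T) \ge \rk(A_i \cup S) + 1$, the desired contradiction.

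Finally, the $T$ for which neither upgrade is available have $T_1 \in \{\emptyset, A_i\}$ and $T_2 \in \{\emptyset, S\}$; after excluding $T = \emptyset$ and $T = A_i \cup S$, only $T = A_i$ and $T = S$ remain, and both force $\rk A_i + \rk S = \rk(A_i \cup S)$. Combined with the fact that $A_i$ is a component of $M/S$ (which, after translating $\rk_{M/S}$ back to $\rk$, reads $\rk(A_i \cup S) + \rk(E - A_i) = \rk E + \rk S$), this rearranges to $\rk A_i + \rk(E - A_i) = \rk E$, exhibiting $A_i$ as a proper nonempty separator of $M$ and contradicting $M$ connected. The main obstacle is purely bookkeeping: correctly translating between $\rk$ and $\rk_{M/S}$ and verifying that every candidate configuration of $T$ falls into one of the three resolved cases.
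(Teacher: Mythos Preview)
Your proof is correct, and it takes a genuinely different route from the paper's. The paper argues structurally: if $A_i\cup S$ were disconnected, then since $S$ is connected it lies in a single component of $A_i\cup S$, and since $(A_i\cup S)/S\cong A_i$ is connected, all but one component of $A_i\cup S$ must lie inside $S$; together these force $S$ itself to be a component, say $A_i\cup S = S\oplus C$. A short rank computation using that $A_i$ is a summand of $M/S$ then yields $\rk M=\rk B+\rk C$ (with $B=\bigcup_{j\ne i}A_j\cup S$), contradicting connectedness of $M$.

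Your argument bypasses the component analysis entirely and works directly with submodularity: four applications give the baseline inequality $\rk T+\rk((A_i\cup S)-T)\ge\rk(A_i\cup S)$, and then connectedness of $(M/S)|_{A_i}$ or of $M|_S$ sharpens one of the two intermediate bounds by $+1$ whenever $T$ cuts $A_i$ or $S$ nontrivially. The residual cases $T=A_i$ and $T=S$ are dispatched exactly as the paper does, by translating the component equation for $A_i$ in $M/S$ back to ranks in $M$. What you gain is self-containment: you never invoke the fact that a connected restriction sits inside a single component, nor its dual. What the paper gains is brevity and a cleaner picture of \emph{why} the obstruction occurs (the only possible separation is $S$ versus $A_i$, and that one propagates to a separation of $M$). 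Both land on the same final rank identity $\rk A_i+\rk(E-A_i)=\rk E$.
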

\begin{proof}
Suppose $A_i\cup S$ is disconnected. Write $A=A_i\cup S$ and $B=\bigcup_{j\ne i}A_j\cup S$, so that $M/S=(A-S)\oplus(B-S)$. Because $S$ is connected, it must be contained in a connected component of $A$. Dually, since $A/S\cong A_i$ is connected, $S$ must contain all but one connected component of $A$. So in fact $S$ is a connected component of $A$, say $A=S\oplus C$.

We know that $\rk M-\rk S=(\rk A-\rk S)+(\rk B-\rk S)$, but our decomposition of $A$ gives us that the right-hand side is $\rk C+\rk B-\rk S$, so $\rk M=\rk B+\rk C$. So in fact $M=B\oplus C$, contradicting the connectedness of $M$.
\end{proof}

Note that by applying the theorem inductively to the $A_i$'s themselves, we get that any $S\cup\bigcup_{i\in I}A_i$ is also connected. Again we can extract a dual version of this statement: if $M$ and $M/S$ are connected but $S=\bigoplus B_i$ with each $B_i$ connected, then each $M/B_i$ is connected.

\begin{thm}
\label{thm:ecinvariance}
Suppose that $M$ is connected, that $\SS$ contains every set $S$ for which both $S$ and $M/S$ are connected, and that whenever $S\in\SS$, all of the connected components of $S$ are also in $\SS$. Then $\ec_{\SS}(M)=\ec(M)$.
\end{thm}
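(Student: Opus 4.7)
My plan is to pass from $\mathcal{P}(E)$ down to $\SS$ in three stages, showing that $\ec$ is unchanged at each via Proposition \ref{prop:removechange} and (the $a$- and $b$-versions of) Corollary \ref{cor:removemnz}. Write $\TT_1 := \{S \subseteq E : S \text{ connected}\}$ and $\SS_C := \SS \cap \TT_1$; by the hypotheses on $\SS$ one has $\SS_2 \subseteq \SS_C \subseteq \SS$ and $\SS_C \subseteq \TT_1 \subseteq \mathcal{P}(E)$. In Step 1, I remove every disconnected set from $\mathcal{P}(E)$: since the ambient poset is all of $\mathcal{P}(E)$, the extra condition in Proposition \ref{prop:disconnected} is automatic, so $a(S) = 0$ for every disconnected $S$, and Corollary \ref{cor:removemnz} gives $\ec_{\TT_1}(M) = \ec(M)$. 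In Step 2, I remove from $\TT_1$ every connected $S$ with $M/S$ disconnected and $S \notin \SS$: the dual of Proposition \ref{prop:disconnected} yields $b_{\TT_1}(S) = 0$ because every $T \supseteq S$ in $\TT_1$ is connected and, by Lemma \ref{lem:connectedness}, if $M/T = \bigoplus_j A_j$ then each $T \cup A_j$ is connected and hence still in $\TT_1$. The $b$-version of Corollary \ref{cor:removemnz} permits these removals in one sweep, leaving $\SS_C$ with $\ec_{\SS_C}(M) = \ec(M)$.

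Step 3 is additive: I add the disconnected members of $\SS$ back one at a time, in any order, to reach $\SS$. When adding $S \in \SS$ disconnected to the current poset $P$ (with $\SS_C \subseteq P \subseteq \SS$), I apply Proposition \ref{prop:disconnected} in the ambient poset $P \cup \{S\}$. Any $T \subseteq S$ that lies in $P \cup \{S\}$ lies in $\SS$, so by hypothesis (ii) on $\SS$ the connected components of $T$ lie in $\SS$, and being connected they actually lie in $\SS_C \subseteq P \cup \{S\}$. Hence $a_{P \cup \{S\}}(S) = 0$, and Proposition \ref{prop:removechange} confirms that the addition preserves $\ec$. Iterating gives $\ec_\SS(M) = \ec_{\SS_C}(M) = \ec(M)$, as desired.

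The main obstacle is the warning just after Proposition \ref{prop:disconnected}: one cannot simultaneously delete sets with $a = 0$ and sets with $b = 0$, and, more substantively, one cannot in general drop a type-B set (connected with $M/S$ disconnected) from an arbitrary subposet because the supersets $T \cup A_j$ demanded by the dual statement may have gone missing. The three-stage structure sidesteps this by carrying out Step 2 while every connected superset is still available in $\TT_1$, and by using hypothesis (ii) on $\SS$ in Step 3 to guarantee that the connected components of every set ever present in the growing poset have been placed there beforehand.
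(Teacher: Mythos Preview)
Your argument is correct (the symbol $\SS_2$ is undefined; I read it as the set of flacets $\{S : S$ and $M/S$ both connected$\}$, which indeed sits inside $\SS_C$, and nothing in the proof depends on it). The route, however, differs from the paper's. The paper works in two stages in the opposite order: it first removes from $\mathcal{P}(E)$, via the $b$-version of Corollary~\ref{cor:removemnz}, every $S$ with $M/S$ disconnected and $S\notin\SS$, obtaining an intermediate collection $\TT$; it then removes $\TT\setminus\SS$ via the $a$-version, verifying the side condition of Proposition~\ref{prop:disconnected} inside $\TT$ by a short case analysis that invokes the dual of Lemma~\ref{lem:connectedness} and the closure hypothesis on $\SS$ simultaneously. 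Your three-stage version trades an extra step for cleaner intermediate posets: by first passing to $\TT_1$ (all connected sets), your $b$-removal in Step~2 needs only Lemma~\ref{lem:connectedness} with no case split, and the closure hypothesis on $\SS$ is isolated in Step~3, where it is used in the most transparent possible way. The toolbox is identical; you have rearranged it so that each hypothesis of the theorem is consumed by exactly one step.
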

\begin{proof}
Starting with all of $\mathcal{P}(E)$, using Corollary \ref{cor:removemnz} we may remove every set $S$ for which $M/S$ is disconnected and $S\notin\SS$. Call the resulting collection $\TT$. If $T\in\TT-\SS$, we know that $M/T$ is connected, or we would have removed it already. So $T$ must be disconnected, or else it would be in $\SS$. Write $T=\bigoplus_i T_i$ with each $T_i$ connected.

We know that the $T_i$'s themselves are in $\TT$: each $M/T_i$ is connected by the dual version of Lemma \ref{lem:connectedness}, so in fact each $T_i\in\SS$. Suppose $U\subsetneq T$ and $U\in\TT$. If $U\in\TT-\SS$, then $a_\TT(U)=0$ by induction. Otherwise, $U\in\SS$, so its connected components are in $\SS$ by hypothesis, and we again can conclude inductively that $a_\TT(U)=0$. This is enough to be able to apply Proposition \ref{prop:disconnected} to get that $a_\TT(T)=0$.

So by applying Corollary \ref{cor:removemnz} once more, we may remove every set in $\TT-\SS$, which gives the result.
\end{proof}

Note that, in particular, Lemma \ref{lem:connectedness} implies that taking $\SS$ to be the collection of \emph{all} sets $S$ for which both $S$ and $M/S$ are connected will satisfy the hypotheses of Theorem \ref{thm:ecinvariance}. (These sets are called \defof{flacets}, and come up in the study of matroid polytopes. See \cite[2.6]{matpoly}.)

Expected codimension turns out to be well-behaved under direct sums:

\begin{prop}
\label{prop:ecdirectsum}
Let $M$ and $N$ be matroids on sets $E$ and $F$, and take collections $\SS\subseteq\mathcal{P}(E)$ and $\TT\subseteq\mathcal{P}(F)$. In $\mathcal{P}(E\cup F)$, let \[\AA=\SS\cup\TT\cup\{S\cup T:S\in\SS,\ T\in\TT\}.\] Then $\ec_\AA(M\oplus N)=\ec_\SS(M)+\ec_\TT(N)$.
\end{prop}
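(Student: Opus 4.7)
The whole argument rides on one claim: for $A = S \cup T \in \AA$ with both $S \in \SS$ and $T \in \TT$ nonempty, $a_\AA(A) = 0$. Granted this, the plan is to decompose
\[
\ec_\AA(M \oplus N) = \sum_{A \in \AA}(k - \rk A)\,a_\AA(A)
\]
into contributions indexed by whether $A$ lies in $\SS$, in $\TT$, or in the nontrivial-union stratum. The nontrivial-union piece vanishes by the claim. For $A \in \SS$ (so $A \subseteq E$), any $A' \subseteq A$ in $\AA$ must already lie in $\SS$, because $E \cap F = \varnothing$ forces $A' \cap F = \varnothing$; an easy induction then gives $a_\AA(A) = a_\SS(A)$, and symmetrically for $A \in \TT$. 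Using $\rk_{M \oplus N}\bigl|_{\mathcal{P}(E)} = \rk_M$ and its analog on $F$, the two surviving sums reassemble into $\ec_\SS(M) + \ec_\TT(N)$, with $k = k_M + k_N$ used uniformly throughout.

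To prove the claim, induct on $|S| + |T|$. Because $E \cap F = \varnothing$, every proper subset of $A = S \cup T$ lying in $\AA$ decomposes as $S' \cup T'$ with $S' \subseteq S$, $T' \subseteq T$, and must be of one of three types: (i) $T' = \varnothing$ with $S' \in \SS$; (ii) $S' = \varnothing$ with $T' \in \TT$; (iii) both $S', T'$ nonempty with $S' \in \SS$, $T' \in \TT$, and $(S',T') \ne (S,T)$. Type~(iii) subsets contribute $0$ by the inductive hypothesis. Types (i) and (ii) contribute
\[
\sum_{S' \in \SS,\, S' \subseteq S} a_\SS(S') \;+\; \sum_{T' \in \TT,\, T' \subseteq T} a_\TT(T') \;=\; c_M(S) + c_N(T)
\]
by the defining recursion $c(X) = \sum_{X' \subseteq X,\, X' \in \SS} a_\SS(X')$ applied to $S \in \SS$ and $T \in \TT$ (note that $S$ and $T$ themselves are proper subsets of $A$ precisely because the opposite component is nonempty). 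Since rank is additive on direct sums, $c_{M \oplus N}(S \cup T) = c_M(S) + c_N(T)$, and the claim follows.

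The only delicate step is the enumeration of proper subsets of $A$ in $\AA$ (which is crisp only because $E \cap F = \varnothing$) together with the inductive sub-claim $a_\AA(S') = a_\SS(S')$ for $S' \in \SS$; everything else is just the defining recursion for $a$ producing the required cancellation.
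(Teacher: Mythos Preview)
Your argument follows the paper's strategy exactly: the paper invokes Proposition~\ref{prop:disconnected} to get $a_\AA(S\cup T)=0$ for nontrivial unions, while you simply unfold that induction by hand; both then split $\ec_\AA$ into a sum over $\SS$ and a sum over $\TT$ using $a_\AA|_\SS=a_\SS$ and $a_\AA|_\TT=a_\TT$.

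The final reassembly step, however, does not go through, and your phrase ``with $k=k_M+k_N$ used uniformly throughout'' is exactly where it breaks. For $A\in\SS$ the summand appearing in $\ec_\AA(M\oplus N)$ is $(k_M+k_N-\rk_M A)\,a_\SS(A)$, whereas the summand in $\ec_\SS(M)$ is by definition $(k_M-\rk_M A)\,a_\SS(A)$; these differ by $k_N\,a_\SS(A)$, and the resulting discrepancy
\[
k_N\sum_{A\in\SS}a_\SS(A)\;+\;k_M\sum_{A\in\TT}a_\TT(A)
\]
has no reason to vanish. Concretely, take $M$ of rank~$1$ on $E=\{1,2\}$ with $1$ and $2$ parallel, $N$ of rank~$1$ on $F=\{3\}$, and $\SS=\mathcal P(E)$, $\TT=\mathcal P(F)$ (so $\AA=\mathcal P(E\cup F)$). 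Then $\ec_\SS(M)=\ec_\TT(N)=0$, but $\ec_\AA(M\oplus N)=1$: the only set with nonzero $a_\AA$ is $\{1,2\}$, and its contribution is $(2-1)\cdot 1=1$. So the identity fails as stated, and the paper's proof has the same gap at the same step. What your argument (and the paper's) actually establishes is
\[
\ec_\AA(M\oplus N)=\ec_\SS(M\oplus N)+\ec_\TT(M\oplus N),
\]
with all three quantities computed in the rank-$(k_M+k_N)$ matroid $M\oplus N$.
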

\begin{proof}
Take $A\in\AA$. If $A$ is a union of nonempty sets from $\SS$ and $\TT$, then it satisfies the hypotheses of Proposition \ref{prop:disconnected}. Otherwise, if $A\in\SS$, then $a_\AA(A)=a_\SS(A)$, and similarly for $\TT$. So in fact, \begin{align*}\ec_\AA(M\oplus N)&=\sum_{A\in\AA}a_\AA(A)\codim(A)\\&=\sum_{A\in\SS}a_\SS(A)\codim(A)+\sum_{A\in\TT}a_\TT(A)\codim(A)\\&=\ec_\SS(M)+\ec_\TT(N).\end{align*}
\end{proof}

In particular, using Proposition \ref{prop:directsum}, we see that if $M$ and $N$ have expected codimension, so does $M\oplus N$. Since it is trivial to check that both matroids on a one-element set have expected codimension, this also applies to loop and coloop extensions.

\begin{figure}[t]
\begin{center}\includegraphics[width=8cm]{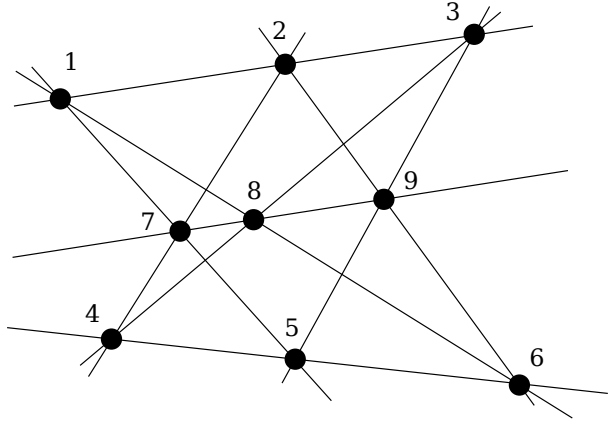}\end{center}
\caption{A projective model of the ``Pappus matroid.''}
\label{fig:pappus}
\end{figure}

We conclude this section with an example of a matroid that doesn't have expected codimension:

\begin{cex}
\label{cex:pappus}
Consider the \defof{Pappus matroid} $P$, the rank-3 matroid on $[9]$ generated by the collinearities in Figure \ref{fig:pappus}. The only sets $S\subseteq[9]$ for which both $S$ and $P/S$ are connected are the nine sets of points which lie on lines in the picture. (That is, 123, 456, 789, 157, 168, 247, 269, 348, and 359.) From this we can easily compute that $\ec(P)=9$. However, the actual codimension of $X(P)$ in $G(3,9)$ is 8. This can be (and was) computed directly with a computer algebra system like Macaulay2; it also follows from computations performed in \cite{fnrmat}. Either way, $P$ doesn't have expected codimension.

This should not be especially surprising: the whole point of the Pappus matroid is that it demonstrates Pappus's theorem, that is, the fact that given any eight of the collinearities in Figure \ref{fig:pappus}, the ninth comes for free. Our definition of expected codimension is unable to keep track of ``global'' constraints like this one, so it treats all nine rank conditions as independent.
\end{cex}

\section{Positroids}
We have already seen in Counterexample \ref{cex:pappus} a case in which the expected codimenion of a matroid variety fails to line up with its actual codimension in the Grassmannian. The main result of this section is that that doesn't happen for positroids:

\begin{thm}
\label{thm:posexpcodim}
Positroids have expected codimension.
\end{thm}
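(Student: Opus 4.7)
The plan is to use the flexibility built into Definition \ref{def:expdim} to restrict the expected codimension sum to cyclic intervals, and then match the resulting combinatorial expression to the known codimension of a positroid variety.

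First, by Proposition \ref{prop:ecdirectsum} together with the additivity of codimension that follows from Proposition \ref{prop:directsum}, and by Corollary \ref{cor:loopcoloop} applied to loops and coloops (which trivially have expected codimension), I may reduce to the case where $P$ is connected with no loops or coloops. Next, I would apply Theorem \ref{thm:ecinvariance} taking $\SS$ to be the collection of all cyclic intervals in $[n]$. The hypotheses require that every flacet of $P$ is a cyclic interval, and that restricting $P$ to a cyclic interval yields a matroid whose connected components are themselves cyclic intervals. Both statements should follow from the positroid structure developed in \cite{klspos} and \cite{projrich}: restrictions and contractions of positroids along cyclic intervals are again positroids (with respect to the induced cyclic orders), and the connected components of such restrictions are controlled by the cyclic rank matrix of Definition \ref{def:posrankmat}.

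With $\SS$ restricted to cyclic intervals, the computation of $\ec_\SS(P) = \sum_{[i,j]} a_\SS([i,j])(k - r_{ij})$ becomes a M\"obius-inversion problem on the poset of cyclic intervals ordered by containment. I expect $a_\SS([i,j])$ to vanish for non-essential cyclic intervals, since by Definition \ref{def:posessential} the non-essential rank conditions are redundant consequences of the essential ones; after M\"obius inversion, their contributions should telescope away. The surviving sum, indexed by the essential set, should then be recognizable as the codimension formula for positroid varieties implicit in \cite{klspos}, which is expressible purely in terms of the cyclic rank matrix.

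The main obstacle is this last combinatorial identification. My approach would be an induction on the number of essential cyclic intervals of $P$, using Proposition \ref{prop:removechange} (and its corollaries on when $a_\SS$ vanishes) to track how $\ec_\SS(P)$ responds to imposing one additional essential rank condition, matched against a parallel geometric induction on the scheme-theoretic generators from Definition \ref{def:posessential}. The base case is the uniform matroid $U_{k,n}$, for which the essential set is empty and both $\ec(U_{k,n})$ and $\codim X(U_{k,n})$ equal zero.
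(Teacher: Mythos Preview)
Your first step --- reducing $\ec(P)$ to $\ec_\II(P)$ with $\II$ the set of cyclic intervals via Theorem \ref{thm:ecinvariance} --- is exactly what the paper does. The two hypotheses you need are proved in the paper itself as Lemmas \ref{lem:connectedintervals} and \ref{lem:intervalcomponents}, not pulled from \cite{klspos} or \cite{projrich}; your hope that these facts are ``in the literature'' is reasonable but vague, and you would still need to check that the connected components of $P|_I$ are cyclic \emph{intervals}, not just cyclic-interval positroids.

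The second step is where your proposal goes wrong. Your conjecture that $a_\II([i,j])$ vanishes for non-essential intervals is false. Redundancy of a rank condition in the scheme-theoretic sense (Definition \ref{def:posessential}) has nothing to do with M\"obius inversion of $c$: the value $a_\II([i,j])$ depends only on the numbers $c(T)=\#T-\rk T$ for $T\subseteq[i,j]$, and these do not detect whether the rank condition on $[i,j]$ was forced by others. Concretely, in Example \ref{ex:positroid} the interval $[2,6]$ has full rank $3$, so the condition on it is vacuous and certainly not essential, yet $a_\II([2,6])=1$. What is actually true (Lemma \ref{lem:npositroid}) is that $a_\II([i,j])$ equals the $(i,j)$ entry of the \emph{affine permutation matrix} --- a $1$ exactly when $r_{ij}=r_{i,j-1}=r_{i+1,j}\ne r_{i+1,j-1}$. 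This is checked directly from the defining recursion for $a$, since $c([i,j])=\#[i,j]-r_{ij}$ counts the $1$'s of the permutation matrix weakly inside $[i,j]$. There are exactly $n$ such intervals (one per row), not $|\text{essential set}|$ of them.

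Consequently your proposed induction on essential intervals is aimed at the wrong index set and would not close. The paper instead sums $\sum_{I\in\II'}(k-\rk I)$ over the $n$ intervals with a $1$, rewrites $k-\rk I = k-\#I+d_I$ with $d_I$ the inversion count, and uses $\sum\#I=nk+n$ and $\sum d_I=l(\pi)+n$ to collapse the whole thing to $l(\pi)=\codim X(P)$. No induction is needed.
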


In order to prove this, we're going to need to understand a little bit more about the matroid structure of a positroid. We refer repeatedly to restrictions and contractions of positroids; note that it follows directly from the second definition in Proposition \ref{prop:posdefs} that these are again positroids.

\begin{lem}
\label{lem:connectedintervals}
If $P$ is a positroid on $[n]$, $X\subseteq [n]$, and both $P|_X$ and $P/X$ are connected, then $X$ is an interval.
\end{lem}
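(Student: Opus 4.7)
My plan is to argue by contradiction: suppose $X$ is not a cyclic interval, and derive the disconnectedness of $P|_X$. If $X$ is not a cyclic interval, decompose it into its maximal cyclic subintervals $X = C_1 \sqcup \cdots \sqcup C_r$ with $r \ge 2$, so that consecutive $C_i$ (cyclically) are separated by nonempty gaps in $[n] \setminus X$. Choose nonempty $A, B$ with $A \sqcup B = X$ (for instance $A = C_1$ and $B = C_2 \cup \cdots \cup C_r$); call such a pair \emph{cyclically separated}, meaning that each of the two complementary arcs of $[n] \setminus X$ sitting between $A$ and $B$ is nonempty.

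The heart of the proof is the following rank-additivity claim: for cyclically separated $A, B \subseteq [n]$,
\[\rk_P(A \cup B) = \rk_P(A) + \rk_P(B).\]
Granting this, $P|_X$ splits nontrivially as $(P|_A) \oplus (P|_B)$, contradicting the hypothesis that $P|_X$ is connected. (The dual argument handles the $P/X$ side if one prefers.)

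To prove the claim, I would take a totally nonnegative $k \times n$ matrix representing a generic point of $X^\circ(P)$, as guaranteed by Proposition \ref{prop:posdefs}(2). The total-nonnegativity structure prevents any nontrivial linear relation ``mixing'' columns of $A$ with columns of $B$ across a gap: such a relation would force a Pl\"ucker coordinate $p_S$ with $S$ straddling a gap into a sign pattern incompatible with $p_S \ge 0$ and with the nonvanishing of bases. Equivalently, one can invoke the $H$-representation of the positroid polytope, which is cut out by the inequalities $\sum_{i \in I} x_i \le \rk_P(I)$ for cyclic intervals $I$; since flacets of any matroid correspond bijectively to the nontrivial facets of its matroid polytope, flacets of a positroid must come from cyclic-interval inequalities, hence are cyclic intervals.

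The main obstacle is the rank-additivity claim --- or equivalently the cyclic-interval facet description of the positroid polytope --- which is the genuine content of the lemma. A more elementary combinatorial route would work with the essential set (Definition \ref{def:posessential}) directly: each essential cyclic-interval condition of $P$ is ``localized'' in a way that respects the gap structure of $X$ (any cyclic interval meeting both $A$ and $B$ while avoiding neither gap would have to contain at least one full gap), and one can induct on the number of essential conditions to show that the pseudo-rank function generated by them is additive across cyclic gaps. Either route delivers the additivity and hence the lemma.
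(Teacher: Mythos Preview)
Your central rank-additivity claim --- that $\rk_P(A\cup B)=\rk_P(A)+\rk_P(B)$ whenever $A$ and $B$ are cyclically separated in $[n]$ --- is false. Take $P$ to be the rank-$3$ positroid on $[6]$ generated by the single condition $\rk([1,4])=2$, and set $A=\{2,3\}$, $B=\{5,6\}$, $X=A\cup B$. The gaps $\{1\}$ and $\{4\}$ separate $A$ from $B$, yet $\rk A=\rk B=2$ while $\rk X=3\ne 4$. Here $P|_X\cong U_{3,4}$ is connected, so no contradiction with your hypothesis on $P|_X$ appears; what goes wrong is that $P/X$ is the rank-$0$ matroid on $\{1,4\}$, which is disconnected. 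That is exactly the point: your argument invokes only the connectedness of $P|_X$ (and treats ``the $P/X$ side'' as an optional alternative), but the connectedness of $P/X$ is not a spare hypothesis --- without it the conclusion of the lemma fails, as this $X$ shows. The vague total-nonnegativity sign argument and the essential-set induction you suggest cannot rescue this, since they would purport to prove the same false additivity statement.

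The polytope/flacet alternative you sketch is not wrong, but it is close to circular in this context: ``every flacet of $P$ is a cyclic interval'' is precisely the lemma, and it is equivalent to the statement that the cyclic-interval rank inequalities already give the $H$-description of the positroid polytope. If you import that $H$-description from elsewhere you are finished, but the work has merely been relocated. The paper's own proof is direct and, crucially, uses \emph{both} connectedness hypotheses in tandem. From $P/X$ connected one finds a circuit of $P/X$ meeting two distinct gap-intervals of $[n]\setminus X$; from $P|_X$ connected (equivalently $(P|_X)^*$ connected) one finds a circuit of $P/([n]\setminus X)$ meeting two distinct arc-pieces of $X$. Restricting and contracting along these circuits --- operations under which positroids are closed --- reduces everything to the $n=4$ configuration with $\rk\{1,3\}=\rk\{2,4\}=1$, which one checks directly is not a positroid.
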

\begin{proof}
Suppose $X$ is not an interval. Take $c_1,c_2\in[n]-X$ to lie in two different cyclic intervals of $[n]-X$. Since $P/X$ is connected, there is a circuit $C$ of $P/X$ which contains both $c_1$ and $c_2$. By restricting to $X\cup C$, we may assume that $P/X$ is a circuit. Similarly, for $b_1,b_2\in X$ lying on different sides of $c_1$ and $c_2$ (so the named elements appear in the cyclic order $b_1,c_1,b_2,c_2$), there is a circuit $B$ of $(P|_X)^*=P/([n]-X)$ containing both, and we may contract the elements of $X-B$ and assume that $(P|_X)^*$ is a circuit, that is, that everything in $X$ is parallel.

Now, delete all elements of $X$ other than $b_1$ and $b_2$. This doesn't change the rank of any set in $P/X$: everything in $X$ was parallel to $b_1$ and $b_2$. Dually, contract all elements of $[n]-X$ except $c_1$ and $c_2$. Now we have $n=4$, and the sets $\{1,3\}$ and $\{2,4\}$ each have rank 1. This matroid is not a positroid, which can easily be checked, so we have a contradiction.
\end{proof}

\begin{lem}
\label{lem:noncrossing}
The connected components of a positroid form a non-crossing partition. (This was also proved independently in \cite{noncross}.)
\end{lem}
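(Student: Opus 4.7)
The plan is to reduce to the case of a two-component positroid and then invoke Lemma \ref{lem:connectedintervals}. It suffices to prove, for any two distinct connected components $C_i, C_j$ of $P$, that $C_i$ and $C_j$ do not cross in the cyclic order on $[n]$; doing this for every pair will give the non-crossing partition property.

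My approach is to form the contraction $Q = P/S$ where $S = [n] - (C_i \cup C_j)$, viewed as a matroid on $C_i \cup C_j$ with the cyclic order induced from $[n]$. Since $P = \bigoplus_k P|_{C_k}$, a direct computation of the rank function from Definition \ref{def:directsum} and part 9 of Definitions \ref{def:matroiddefs} shows $Q = P|_{C_i} \oplus P|_{C_j}$, so $Q$ has exactly the two connected components $C_i$ and $C_j$. Using the fact that positroids are closed under single-element contraction (with respect to the induced cyclic order) and contracting the elements of $S$ one at a time, we conclude that $Q$ is a positroid on $C_i \cup C_j$.

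Lemma \ref{lem:connectedintervals} then applies to $Q$ with $X = C_i$: we have $Q|_{C_i} = P|_{C_i}$ connected, and $Q/C_i = Q|_{C_j} = P|_{C_j}$ connected, so $C_i$ must be a cyclic interval of $C_i \cup C_j$ in the induced cyclic order. Equivalently, the elements of $C_i$ and $C_j$ do not interleave around $[n]$, i.e., $C_i$ and $C_j$ do not cross. The main obstacle is verifying that contraction preserves the positroid property; if one prefers not to cite this, the cleaner alternative is to check directly, using characterization (1) of Proposition \ref{prop:posdefs}, that the cyclic-interval rank conditions generating $P$ translate into cyclic-interval rank conditions generating $Q$ on $C_i \cup C_j$ (for any cyclic interval $I'$ of $C_i \cup C_j$, one writes $I'$ as the trace on $C_i \cup C_j$ of a cyclic interval $I$ of $[n]$, and uses the direct sum decomposition to relate $\rk_Q(I')$ to $\rk_P(I)$).
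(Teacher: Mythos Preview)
Your argument is correct and is essentially the same as the paper's: reduce to the two-component case by passing to $P|_{C_i\cup C_j}$ (equivalently $P/([n]-(C_i\cup C_j))$, since $P$ is a direct sum) and then invoke Lemma~\ref{lem:connectedintervals}. The ``main obstacle'' you flag is already dealt with in the paper---just before Lemma~\ref{lem:connectedintervals} it is noted that restrictions and contractions of positroids are positroids via characterization~(2) of Proposition~\ref{prop:posdefs}---so you can simply cite that rather than rederiving it from characterization~(1).
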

\begin{proof}
Suppose first that $P$ has just two connected components, say $P=A\oplus B$. Then $P/A=B$ and $P/B=A$ are also both connected, so Lemma \ref{lem:connectedintervals} implies that they are both intervals. If there are more than two connected components, they no longer have to both be intervals, but for any two components $C$ and $D$, each of $C$ and $D$ must be an interval inside $C\cup D$, which means in particular that they cannot cross.
\end{proof}

\begin{lem}
\label{lem:intervalcomponents}
If $P$ is a connected positroid on $[n]$ and $I\subseteq [n]$ is an interval, then each connected component of $I$ is an interval.
\end{lem}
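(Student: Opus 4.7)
The plan is to argue by contradiction. Suppose $C$ is a connected component of $P|_I$ that is not a linear interval of $[n]$. Since $C \subseteq I$ and $I$ is a linear interval, being a linear interval and being a cyclic interval of $[n]$ are equivalent for $C$, so $C$ must wrap around in $I$'s inherited cyclic order. By Lemma \ref{lem:noncrossing} applied to the positroid $P|_I$, the components of $P|_I$ form a non-crossing partition of $I$ in its inherited cyclic order, and so the wrapping component $C$ must contain both endpoints of $I$ while all other components of $P|_I$ are sandwiched in a linear subinterval $B \subseteq I - C$ sitting strictly between the two arcs of $C$ inside $I$. Writing $A = [n] - I$, the key observation is that $[n] - B = C \cup A$ is a cyclic interval of $[n]$, so $P|_{[n]-B}$ is itself a positroid.

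I would then locate a four-element obstruction inside $P$ in the spirit of the proof of Lemma \ref{lem:connectedintervals}. Pick $l_1, l_2 \in C$ lying in the two different arcs of $C$ inside $I$, and $x \in B$ between them in the linear order of $I$, so that $l_1 < x < l_2$ in $I$. Because $l_1, l_2$ lie in the same component of $P|_I$, a circuit of $P|_I$ contained in $C$ joins them. On the other hand, $x$ and $l_1$ lie in different components of $P|_I$, so no circuit of $P|_I$ joins them; but $P$ itself is connected, so some circuit of $P$ contains both, and this circuit is forced to leave $I$ and to pick up some element $y \in A$. In the cyclic order of $[n]$, the four marked elements $l_1, x, l_2, y$ then appear in the crossing order.

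The main obstacle is converting this circuit configuration, via positroid-preserving minor operations, into the forbidden crossing-parallel-pairs configuration on four elements that was ruled out at the end of the proof of Lemma \ref{lem:connectedintervals}. The plan is to first restrict $P$ to the cyclic interval $[n]-B$, where the positroid property is preserved, and then to contract cyclic intervals of the resulting sub-positroid so as to collapse the circuit in $C$ down to the two-element shadow $\{l_1, l_2\}$ (forcing $l_1 \parallel l_2$) and the circuit through $x$ and $y$ down to $\{x, y\}$ (forcing $x \parallel y$). Arranging each intermediate restriction and contraction to be along a cyclic interval of the ambient sub-positroid, so that positroid-ness is preserved all the way down, is the delicate combinatorial bookkeeping; it is made possible because the cyclic interval $[n]-B$ places $C$ and $A$ adjacent to each other, and the circuits one needs to collapse lie inside $C$ and across the $C$-$A$ interface. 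Once the reduction is carried out, the resulting four-element positroid has $\{l_1, l_2\}$ and $\{x, y\}$ as parallel pairs in the crossing cyclic order $l_1, x, l_2, y$, which is precisely the configuration already checked in the proof of Lemma \ref{lem:connectedintervals} to be non-realizable as a positroid---the desired contradiction.
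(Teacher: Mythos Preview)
Your plan has the right target—reduce to the crossing four-element configuration ruled out at the end of Lemma~\ref{lem:connectedintervals}—but the execution has two real gaps. First, the structural claim that $C$ must contain both endpoints of $I$ and that $I\setminus C$ is a single linear interval $B$ does not follow from Lemma~\ref{lem:noncrossing}: that lemma only says the components of $P|_I$ form a non-crossing partition in the \emph{cyclic} order on $I$, which still allows $C$ to have three or more linear arcs separated, say, by singleton components. The paper handles this with an explicit WLOG step, shrinking $I$ to the smallest sub-interval whose endpoints lie in $X$. Second, and more seriously, your reduction is internally inconsistent: you propose to restrict $P$ to the cyclic interval $[n]\setminus B$, but $x\in B$, so $x$ is deleted—yet you then want a four-element minor containing $x$ with $x\parallel y$. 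There is no cyclic-interval restriction that removes the rest of $B$ while keeping $x$, so the positroid-preserving reduction you sketch cannot be carried out as written.

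The paper's proof avoids this by working with the contraction $P/X$ rather than with circuits of $P$ itself. It asks whether some circuit of $P/X$ joins a point of $I\setminus X$ to a point of $[n]\setminus I$. If so, that circuit together with a cocircuit of $P|_X$ straddling two arcs of $X$ is exactly the data needed to run the restriction-and-contraction argument of Lemma~\ref{lem:connectedintervals}. If not, then $P/X$ splits as $(I/X)\oplus(([n]\setminus I)\cup X)/X$, and a rank count shows $P$ itself decomposes nontrivially, contradicting connectedness. Your route—finding a circuit of $P$ through $x$ and $l_1$ and noting it must exit $I$—does produce a dependent set in $P/C$ containing $x$ and some $y\in A$, but not a circuit of $P/C$ through both, which is what the reduction actually requires; and it gives no mechanism for handling the case where $x$ and $A$ lie in different components of $P/C$, which is precisely where the paper's second branch kicks in.
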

\begin{proof}
Say $I=X\oplus\bigoplus_iY_i$ with $X$ and each $Y_i$ connected, and suppose $X$ is not an interval, say $X=\bigcup_kJ_k$ and $I-X=\bigcup_lJ'_l$ where each $J_k$ and $J'_l$ is an interval. Since the components of $I$ have to form a non-crossing partition by Lemma \ref{lem:noncrossing}, none of the $Y_i$'s can meet more than one of the $J'_l$'s. So we may assume that left and right endpoints of $X$ coincide with those of $I$ by removing the $Y_i$'s that lie to the left of $X$'s left endpoint or to the right of its right endpoint. We now know that all the $J'_l$'s lie in between two $J_k$'s.

Just as in the proof of Lemma \ref{lem:connectedintervals}, the connectedness of $X$ lets us conclude that there is a circuit of $X^*=P/([n]-X)$ that contains points in two different $J_k$'s. Suppose there is a circuit of $P/X$ that contains a point of $I-X$ and a point of $P-I$. If this were the case, because we forced all the points of $I-X$ to lie between intervals of $X$, we would be in the exact situation that gave us a contradiction in the previous proof.

So there must be no such circuit. But this means that \[P/X=I/X\oplus((P-I)\cup X)/X=\left(\bigoplus_iY_i\right)\oplus(P-I\cup X/X),\] which means that \[\rk P-\rk X=\sum_i\rk Y_i+\rk(P-I\cup X)-\rk X,\] so in fact \[P=\left(\bigoplus_iY_i\right)\oplus((P-I)\cup X),\] contradicting the connectedness of $P$.
\end{proof}

\begin{lem}
\label{lem:npositroid}
For a positroid $P$, let $\II\subseteq\mathcal{P}([n])$ be the collection of all cyclic intervals. For any interval $[i,j]\ne[n]$, $a_\II([i,j])$ is equal to the entry (either 0 or 1) at $(i,j)$ in the affine permutation matrix.
\end{lem}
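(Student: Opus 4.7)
The plan is to unfold the recursive definition of $a_\II$ via M\"obius inversion, reduce it to a clean expression in four neighboring entries of the cyclic rank matrix, and then do a short case analysis. First I would cyclically rotate the labeling of $[n]$ so that $[i,j]$ becomes an ordinary (non-wrap-around) interval; since cyclic rotation preserves both $a_\II$ and the entries of the affine permutation matrix, we may assume $i \le j$ with $[i,j]$ a standard interval. The crucial observation is then that the cyclic sub-intervals of a proper cyclic interval $[i,j]$ are exactly its ordinary sub-intervals $[i',j']$ with $i \le i' \le j' \le j$, so the defining identity $c([i,j]) = \sum_{T \in \II,\, T \subseteq [i,j]} a_\II(T)$ becomes a sum over a standard interval poset and admits a two-dimensional M\"obius inversion.

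That inversion (equivalently, taking second differences) gives, for $j > i$,
\[
a_\II([i,j]) = c([i,j]) - c([i+1,j]) - c([i,j-1]) + c([i+1,j-1]).
\]
Substituting $c(S) = \#S - \rk S$, the length contributions telescope to zero and we are left with
\[
a_\II([i,j]) = r_{i,j-1} + r_{i+1,j} - r_{ij} - r_{i+1,j-1}.
\]
For the base case $j = i$, the only sub-cyclic-interval is $[i,i]$ itself, so $a_\II([i,i]) = c([i,i]) = 1 - r_{ii}$, which is $1$ precisely when $i$ is a loop, matching the affine-permutation convention at single-element positions.

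It remains to analyze the four adjacent quantities $a = r_{ij}$, $b = r_{i,j-1}$, $c = r_{i+1,j}$, $d = r_{i+1,j-1}$. Property~2 of Definition~\ref{def:irankmat} forces each of $a-b$, $a-c$, $b-d$, $c-d$ to lie in $\{0,1\}$, and property~3 applied at position $(i+1, j-1)$ says that whenever $d = b = c$ one must have $a = d$. Enumerating the configurations compatible with these constraints, the expression $b + c - a - d$ always evaluates to $0$ or $1$, and yields $1$ exactly when $b = c = a$ and $d = a - 1$ --- precisely the condition defining a $1$ at $(i,j)$ in the affine permutation matrix of Definition~\ref{def:posessential}. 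The place where the argument could fail is the a priori configuration $b = c = d = a - 1$, which would give $a_\II = -1$; the role of property~3 is to exclude this, and I expect verifying this single exclusion (together with checking the remaining handful of cases) to be the only delicate step in the proof.
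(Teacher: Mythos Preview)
Your argument is correct and closely related to the paper's, though you run the computation in the opposite direction. The paper simply observes that the defining recursion $c(S)=\sum_{T\subseteq S}a_\II(T)$ determines $a_\II$ uniquely, so it suffices to check that the permutation--matrix entries satisfy this recursion; it then appeals to the (standard) fact that $\#[i,j]-r_{ij}$ equals the number of $1$'s at positions $(k,l)$ with $[k,l]\subseteq[i,j]$. You instead invert the recursion explicitly, obtaining $a_\II([i,j])=r_{i,j-1}+r_{i+1,j}-r_{ij}-r_{i+1,j-1}$, and then do the small case analysis showing this second difference is exactly the permutation--matrix entry. These are the forward and backward directions of the same identity; your version is more self-contained since it does not invoke the counting fact as a black box, while the paper's is shorter because it does. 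Your observation that property~3 of the rank matrix is precisely what rules out the would-be value $-1$ is a nice way to isolate where that axiom enters.
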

\begin{proof}
To see this, it's enough to note that our purported $a_\II$ satisfies the relation \[c(S)=\sum_{T\subseteq S}a_\II(T).\] We mentioned above that $c([i,j])=\#[i,j]-\rk[i,j]$ is the number of intervals $[k,l]\subseteq[i,j]$ with a 1 in the affine permutation matrix at position $(k,l)$, so this is true.
\end{proof}

\begin{proof}[Proof of Theorem \ref{thm:posexpcodim}]
First, we claim that for a positroid $P$, if $\II\subseteq\mathcal{P}([n])$ is the collection of all cyclic intervals of $[n]$, $\ec(P)=\ec_\II(P)$. For connected positroids, this follows immediately from Theorem \ref{thm:ecinvariance}: Lemmas \ref{lem:connectedintervals} and \ref{lem:intervalcomponents} say that taking $\SS=\II$ satisfies the hypotheses of the theorem. For a general positroid, we can decompose it as a direct sum and apply Proposition \ref{prop:ecdirectsum}.

So now it remains to show that $\ec_\II(P)$ is the actual codimension of $P$. We'll use the fact that $\codim P=l(\pi)$, where $\pi$ is the corresponding affine permutation. Let $d_I$ be the number of intervals $[k,l]\subseteq I$ with a 1 in position $(k,l)$, and let $\II'$ be the collection of intervals other than $[n]$ with a 1 at the corresponding position. We can ignore $[n]$ itself because $\codim[n]=0$, so we may compute, using Lemma \ref{lem:npositroid}:
\[\ec_\II(P)=\sum_{I\in\II}a_\II(I)\codim(I)=\sum_{I\in\II'}\codim(I)=\sum_{I\in\II'}(k-(\#I-d_I)),\] where $k$ is the rank of $P$. A simple computation (which is spelled out in \cite{klspos}) shows that $\sum\#I=nk+n$, and we know that $\sum d_I=l(\pi)+n$, since we're counting the pairs of intervals in the definition of $l(\pi)$ and also counting the pairs $(I,I)$. So we're left with \[\ec_\II(P)=nk-(nk+n)+l(\pi)+n=l(\pi).\]
\end{proof}

\section{Valuativity}
\label{sec:valuativity}
Let $M$ be a matroid on a set $[n]$. For each basis $B$ of $M$, consider the vectors in $\R^n$ whose entries are 1 if the corresponding element of $[n]$ is in $B$ and 0 otherwise. The convex hull of these vectors is called the \defof{matroid polytope} of $M$, written $P(M)$. There are many examples of combinatorial properties of matroids that are encoded in the geometry of the matroid polytope.

\begin{defn}
Given a matroid $M$, a \defof{matroidal subdivision} of $P(M)$ is a decomposition of $P(M)$ into polytopes which are all matroid polytopes. If $\mathcal{D}$ is a matroidal subdivision of $P(M)$, write $\mathcal{D}_{in}$ for the internal faces of $\mathcal{D}$, that is, the faces of $\mathcal{D}$ that are not also faces of $P(M)$. Given a function $f$ from the set $\mathrm{Mat}(n)$ of all matroids on $[n]$ into an abelian group, we say $f$ is \defof{valuative} if, for any matroid $M$ and any matroidal subdivision $\mathcal{D}$ of $P(M)$, \[f(M)=\sum_{P(N)\in\mathcal{D}_{in}}(-1)^{\dim P(M)-\dim P(N)}f(N).\]
\end{defn}

Valuative matroid invariants are studied in detail in \cite{valuative}. We single out the following result, which appears as \cite[5.4]{valuative} in slightly different language:

\begin{thm}
\label{thm:valschub}
The set of Schubert matroids forms a basis for $\mathrm{Mat}(n)$ modulo matroidal subdivisions.
\end{thm}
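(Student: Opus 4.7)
The plan is to work with the valuative group $V_n$, defined as the free abelian group on $\mathrm{Mat}(n)$ modulo the relations $[M]-\sum_{N\in\mathcal{D}_{in}}(-1)^{\dim P(M)-\dim P(N)}[N]$ for every matroid $M$ and every matroidal subdivision $\mathcal{D}$ of $P(M)$. A function $f:\mathrm{Mat}(n)\to A$ is valuative precisely when it factors through $V_n$, so the theorem says the Schubert matroids form a $\Z$-basis of $V_n$. I would prove this by separately establishing spanning and independence.

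For independence, the cleanest route is to exhibit, for each Schubert matroid $\Sigma_\lambda$, a valuative invariant $\varphi_\lambda$ whose value on Schubert matroids forms an invertible matrix. The natural candidates are indicator functions of matroid polytopes: the rule $M\mapsto \mathbf{1}_{P(M)}$ is tautologically valuative, because a matroidal subdivision of $P(M)$ satisfies the inclusion--exclusion identity $\mathbf{1}_{P(M)}=\sum_{N\in\mathcal{D}_{in}}(-1)^{\dim P(M)-\dim P(N)}\mathbf{1}_{P(N)}$ in the interior. It then suffices to check that the indicator functions of Schubert matroid polytopes are linearly independent in $L^1(\Delta(k,n))$; this follows because Schubert matroid polytopes are order polytopes whose vertex sets (corresponding to partitions fitting in a $k\times(n-k)$ box) form a distributive lattice, and a total ordering of Schubert matroids by reverse dominance produces, for each one, a vertex of its polytope not contained in any earlier Schubert polytope.

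For spanning, I would argue by induction on a suitable complexity of $M$ --- for instance, on the number of pairs $(S,T)$ with $S\subseteq T\subseteq[n]$ witnessing that $M$ fails to be a Schubert matroid with respect to some fixed flag. The inductive step is to produce, for a non-Schubert $M$, an explicit matroidal subdivision of $P(M)$ whose pieces all have strictly smaller complexity; the candidate is a hyperplane split coming from a coloop-free decomposition, i.e., slicing $P(M)$ by an affine hyperplane of the form $\sum_{i\in S}x_i=r$ where $(S,r)$ is chosen so that one half inherits a Schubert-like structure. The internal faces of this split are matroid polytopes by the standard criterion (Speyer's theorem on matroidal subdivisions), and induction expresses $[M]$ as an integer combination of $[\Sigma_\lambda]$'s.

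The main obstacle will be the spanning step, and specifically the construction of these inductive splits: one must guarantee that the hyperplane one picks really does yield a matroidal subdivision and that the pieces are genuinely simpler. A conceptually cleaner alternative, which avoids explicit splits, is to show that the rank of $V_n$ is at most the number of Schubert matroids by exhibiting enough independent valuative invariants; combined with independence this forces equality and makes spanning automatic. Concretely, I would use the rank-function invariants $M\mapsto\#\{\text{bases }B:B\text{ has specified intersection pattern with a reference flag}\}$, one for each partition $\lambda\subseteq k\times(n-k)$, and check by a triangular-matrix argument that these span the valuative dual; dualizing then yields the Schubert basis on the primal side.
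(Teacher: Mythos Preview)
The paper does not give its own proof of this statement: it is quoted as a result from the literature, with the sentence ``We single out the following result, which appears as \cite[5.4]{valuative} in slightly different language,'' and no argument is supplied. So there is nothing in the paper to compare your proposal to beyond the citation to Derksen--Fink.

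That said, a few remarks on your sketch as an attempt at the cited result. The overall architecture (show Schubert classes span $V_n$ and are independent, using polytope indicator functions for the latter) is indeed roughly how Derksen and Fink proceed. But two points in your write-up are shaky. First, Schubert matroid polytopes are not order polytopes, so that part of the independence argument does not go through as stated; the correct version is that each Schubert matroid $\Omega_I$ (for $I\in\binom{[n]}{k}$) has $I$ as its unique Gale-maximal basis, and the basis-indicator valuations $M\mapsto[I\text{ is a basis of }M]$ are valuative and give a unitriangular pairing against the Schubert matroids once you order the $I$'s by the Gale order. Second, your spanning argument is the real content of the theorem and is essentially absent: the ``hyperplane split'' idea does not obviously terminate or decrease complexity, and the dual rank-counting alternative you mention is exactly what needs proving (that there are no \emph{more} valuative relations than those visible from Schubert matroids). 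Derksen--Fink handle spanning by an explicit construction expressing an arbitrary matroid polytope indicator as a signed sum of Schubert ones, which is the step your proposal does not supply.
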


We will show:

\begin{thm}
\label{thm:expval}
Expected codimension is a valuative matroid invariant.
\end{thm}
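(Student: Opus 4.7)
The plan is to exhibit $\ec$ as an explicit $\Z$-linear combination of valuative functions, after which valuativity of $\ec$ follows from the fact that valuative invariants form an abelian group. The basic building blocks are joint rank indicators on chains of subsets. For each chain $T\subseteq S\subseteq[n]$ and each triple of nonnegative integers $(r,s,\kappa)$, set
\[
\chi_{T,S,r,s,\kappa}(M)=\mathbf 1\bigl[\rk_M T=r,\ \rk_M S=s,\ \rk_M[n]=\kappa\bigr].
\]

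Starting from the symmetric form derived earlier in the chapter,
\[
\ec(M)=\sum_{T\subseteq S\subseteq[n]}\mu(T,S)\,(|T|-\rk_M T)\,(\rk_M[n]-\rk_M S),
\]
I would split each product of the two rank-linear factors by the joint values of its ranks:
\[
(|T|-\rk_M T)(\rk_M[n]-\rk_M S)=\sum_{r,s,\kappa}(|T|-r)(\kappa-s)\,\chi_{T,S,r,s,\kappa}(M),
\]
and substitute. The result presents $\ec(M)$ as a finite $\Z$-linear combination of the $\chi_{T,S,r,s,\kappa}$, and valuativity follows term by term.

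The main obstacle is justifying that each $\chi_{T,S,r,s,\kappa}$ is itself valuative, because valuative invariants are not closed under pointwise multiplication; one cannot simply multiply the single-set indicators $\mathbf 1[\rk_M T=r]$ and $\mathbf 1[\rk_M S=s]$ and conclude. The cleanest route is to appeal to the framework of \cite{valuative}, where the rank sequence along any fixed maximal flag is shown to be a universal valuative invariant. By choosing any extension of the chain $T\subseteq S\subseteq[n]$ to a maximal flag and summing the appropriate flag-rank-sequence indicators over all possible values of the intermediate ranks, one realizes $\chi_{T,S,r,s,\kappa}$ as a $\Z$-linear combination of such basic indicators. An alternative would be to combine Theorem~\ref{thm:valschub} with a direct check on Schubert matroids, but this seems more laborious than the flag argument, since one would still have to verify that the Schubert expansion of $\ec$ respects the basis relations dictated by matroidal subdivisions.
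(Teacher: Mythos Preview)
Your argument is correct. Decomposing $\ec$ as a finite $\Z$-linear combination of joint rank indicators $\chi_{T,S,r,s,\kappa}$ and then invoking the Derksen--Fink framework to see that each such indicator is valuative (by refining $T\subseteq S\subseteq[n]$ to a maximal flag and summing the flag-rank-sequence indicators over the intermediate ranks) is a clean and legitimate route. Your caution about pointwise products is well placed, and the flag argument is exactly the right way around it.

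The paper takes a different path. Rather than citing the universal valuative invariant, it introduces the three-variable generalization
\[
s_M(x,y,z)=\sum_{S\subseteq T\subseteq[n]}x^{\#S-\rk S}\,y^{\rk M-\rk T}\,z^{\#T-\#S}
\]
of the Tutte polynomial and proves directly, via a short half-space and Euler-characteristic argument in the spirit of Speyer's proof for the Tutte polynomial, that $M\mapsto s_M$ is valuative. Expected codimension is then recovered as a single linear functional applied to $s_M$ (a mixed partial derivative evaluated at a point), so its valuativity is immediate. In effect the paper is proving from scratch the valuativity of the cumulative indicators $\mathbf 1[\rk_N S\ge r,\ \rk_N T\ge s]$, which is the same content you are importing from \cite{valuative}, but packaged so as to yield the stronger statement that all of $s_M$ is valuative. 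Your approach is shorter if one is willing to cite Derksen--Fink; the paper's is self-contained and gives the Tutte-like polynomial as a bonus.
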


Since Schubert matroids, being positroids, have expected codimension, Theorem \ref{thm:valschub} gives us another way to think about expected codimension: you could have defined it by assigning Schubert matroids their codimensions and extending to all matroids by subdividing the matroid polytope and insisting that it be valuative.

We'll prove Theorem \ref{thm:expval} by proving something stronger first:

\begin{lem}
\label{lem:tuttelike}
Let $M$ be a matroid on $[n]$, and define \[s_M(x,y,z)=\sum_{S\subseteq T\subseteq [n]}x^{\#S-\rk S}y^{\rk M-\rk T}z^{\#T-\#S}.\] Then the function that takes $M$ to $s_M$ is valuative.
\end{lem}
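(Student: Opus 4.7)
The plan is to decompose $s_M$ by the pairs $S \subseteq T \subseteq [n]$ and reduce valuativity to a simpler statement. Writing
\[
s_M(x, y, z) \;=\; \sum_{S \subseteq T \subseteq [n]} z^{\#T - \#S} \, x^{\#S - \rk_M S} \, y^{\rk M - \rk_M T},
\]
this is a finite sum, and valuativity in the polynomial ring $\Z[x, y, z]$ is checked coefficient-wise. Since $\#S$, $\#T$, and $k = \rk M$ are constants independent of $M$ (all matroids arising in a subdivision of $P(M)$ have the same rank $k$), the summand indexed by $(S, T)$ depends on $M$ only through the pair of integers $(\rk_M S, \rk_M T)$. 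Thus it suffices to prove that, for each fixed $S \subseteq T$ and each fixed $(r_1, r_2)$, the indicator
\[
M \;\mapsto\; \mathbf{1}_{\rk_M S = r_1,\; \rk_M T = r_2}
\]
is a valuative $\Z$-valued matroid invariant.

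To prove that, I would appeal to the machinery of \cite{valuative}. Their universal valuative invariant (the $\mathcal{G}$-invariant of Derksen) records, for each permutation $\sigma \in S_n$, the rank sequence $\bigl(\rk_M\{\sigma(1), \ldots, \sigma(i)\}\bigr)_{i=1}^n$. For any $\sigma$ whose first $\#S$ entries enumerate $S$ and whose first $\#T$ entries enumerate $T$, the pair $(\rk_M S, \rk_M T)$ appears at positions $\#S$ and $\#T$ of the rank sequence for $\sigma$. Hence the indicator is extracted by summing the contributions of all such $\sigma$ over rank sequences with $s_{\#S} = r_1$ and $s_{\#T} = r_2$, and dividing by the (constant, $M$-independent) number of such $\sigma$. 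Because $\mathcal{G}_M$ is valuative and a rational linear combination of valuative invariants with integer output is valuative, the indicator is valuative, and summing the resulting contributions over $(S, T, r_1, r_2)$ with weights $z^{\#T - \#S} x^{\#S - r_1} y^{k - r_2}$ finishes the argument.

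The main obstacle is making the extraction cleanly from the definition of the $\mathcal{G}$-invariant in \cite{valuative}, which typically records only the multiset of rank sequences across permutations rather than a per-permutation record; some care is needed to isolate rank information tied to a specific chain $S \subseteq T$. If that route proves awkward, my fallback is to prove the rank-pair indicator's valuativity directly from the structure of matroidal subdivisions: using the polytope indicator relation $\mathbf{1}_{P(M)} = \sum_{N \in \mathcal{D}_{in}} (-1)^{\dim P(M) - \dim P(N)} \mathbf{1}_{\mathrm{relint}\,P(N)}$, the rank $\rk_M U$ is the support function of $P(M)$ at $\chi_U$, and a face-by-face analysis of which pieces of the subdivision attain each possible pair $(r_1, r_2)$ (using that a codimension-$1$ internal face has, in any direction, support value equal to the minimum of the support values of the two adjacent top-dimensional pieces) yields the valuation identity for the indicator. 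Either route reduces the problem to standard combinatorics of matroid polytope subdivisions.
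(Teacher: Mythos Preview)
Your primary route via the $\mathcal{G}$-invariant has a genuine gap, one you partly flag yourself. The $\mathcal{G}$-invariant records only the \emph{multiset} of rank sequences over all permutations, and this does not determine $\rk_M S$ for a specific $S$: for instance, the two rank-$1$ matroids on $\{1,2\}$ with a single loop at $1$ or at $2$ have identical $\mathcal{G}$-invariants but different values of $\rk_M\{1\}$. Your averaging trick cannot repair this, because the $\mathcal{G}$-invariant simply does not carry the per-chain information you need. What would work is the stronger, \emph{non-symmetric} universal valuative function from Derksen--Fink (the one recording the rank sequence along a fixed flag), but you do not cite that, and invoking it amounts to importing a result at least as hard as the lemma itself.

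Your fallback is the right idea and is essentially the paper's argument, but you are missing the step that makes it clean. The paper fixes $S\subseteq T$ and, instead of the indicator $[\rk_N S = r_1,\ \rk_N T = r_2]$, works with the cumulative version $[\rk_N S \ge r,\ \rk_N T \ge s]$. Since $\rk_N(U)=\max_{x\in P(N)} l_U(x)$ for the linear functional $l_U(x)=\sum_{i\in U}x_i$, the condition $\rk_N S\ge r$ is exactly ``$P(N)$ meets the open half-space $l_S>r-\tfrac12$''. Intersecting $P(M)$ with the two half-spaces for $S$ and $T$ gives a polytope whose induced decomposition has internal faces precisely the $P(N)$ satisfying both rank inequalities, and then the Euler-characteristic identity $\sum_{N}(-1)^{\dim P(M)-\dim P(N)}=1$ finishes it. Your sketch (``support function'', ``face-by-face analysis'') gestures at this but does not isolate the passage from $=$ to $\ge$ and the half-space interpretation, which is the actual content of the proof.
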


This is a generalization of the \defof{Tutte polynomial}, which is \[t_M(x,y)=s_M(x-1,y-1,0).\] In \cite[6.4]{troplin}, David Speyer shows that the Tutte polynomial is a valuative matroid invariant. The proof we give here of \ref{lem:tuttelike} turns out to be almost identical. We single out the following lemma, which appears as \cite[6.5]{troplin}:

\begin{lem}
\label{lem:euler}
If $P$ is a polytope and $\mathcal{G}$ is the set of internal faces of a decomposition of $P$, then \[\sum_{N\in\mathcal{G}}(-1)^{\dim P-\dim N}=1.\]
\end{lem}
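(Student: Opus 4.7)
The plan is to deduce the identity from two Euler-characteristic calculations using the CW structure that $\mathcal{D}$ puts on $P$. Let $d=\dim P$, and treat each face $F\in\mathcal{D}$ as contributing an open cell of dimension $\dim F$ (namely its relative interior) to a regular CW decomposition of $P$.

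Since $P$ is homeomorphic to a closed $d$-ball, $\chi(P)=1$, so
\[
\sum_{F\in\mathcal{D}}(-1)^{\dim F}=1.
\]
The faces of $\mathcal{D}$ contained in $\partial P$ give a CW decomposition of $\partial P\cong S^{d-1}$, and hence contribute $\chi(\partial P)=1+(-1)^{d-1}$. Subtracting, the faces of $\mathcal{D}$ not contained in $\partial P$ contribute $(-1)^d$, and multiplying by $(-1)^d$ yields $\sum_{F\in\mathcal{D}_{in}}(-1)^{d-\dim F}=1$. The degenerate case $d=0$ is immediate: $P$ is a point, $\mathcal{D}_{in}=\{P\}$, and the sum is $1$.

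The main thing I would take care over is the meaning of "internal face." For the identity to hold, $\mathcal{D}_{in}$ must consist of those faces of $\mathcal{D}$ whose relative interior lies in the interior of $P$, rather than the literal reading "faces of $\mathcal{D}$ that are not faces of $P$" suggested earlier in the paper. The two notions can disagree when $\mathcal{D}$ properly refines the face structure of $\partial P$ — for instance, subdividing the bottom edge of a square introduces boundary faces that are not faces of $P$, and including them in the sum breaks the identity. For matroidal subdivisions the two conventions happen to coincide, but since Lemma~\ref{lem:euler} is phrased for arbitrary polytopes the topological reading is the one to use, and once that is fixed the computation above is essentially the whole proof.
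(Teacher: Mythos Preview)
Your argument is correct and is exactly the paper's approach: the paper's one-line proof is that the sum equals $(-1)^{\dim P}(\chi(P)-\chi(\partial P))$, which is $1$ since $P$ is contractible and $\partial P$ is a sphere. Your additional remark about the two possible readings of ``internal face'' is a fair point and worth keeping in mind, though for the matroidal subdivisions the lemma is actually applied to the distinction does not arise.
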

\begin{proof}
This is just $(-1)^{\dim P}(\chi(P)-\chi(\partial P))$ where $\chi$ is the Euler characteristic. So the result follows, becuase $P$ is contractible and $\partial P$ is homeomorphic to a $(\dim P-1)$-sphere.
\end{proof}

\begin{proof}[Proof of Lemma \ref{lem:tuttelike}]
Plugging the definition of $s_M$ to the definition of valuativity, it's enough to show, for any matroidal subdivision $\mathcal{D}$ of $P(M)$, \[x^{\#S-\rk_M S}y^{\rk M-\rk_M T}z^{\#T-\#S}\] is equal to \[\sum_{P(N)\in\mathcal{D}_{in}}(-1)^{\dim P(M)-\dim P(N)}x^{\#S-\rk_N S}y^{\rk N-\rk_N T}z^{\#T-\#S}.\] Comparing the coefficients of $x^{\#S-\rk_M S}y^{\rk M-\rk_M T}z^{\#T-\#S}$, we want that \[\sum_{\substack{P(N)\in\mathcal{D}_{in}\\\rk_N(S)=r,\ \rk_N(T)=s}}(-1)^{\dim P(M)-\dim P(N)}\] is 1 if $(r,s)=(\rk_M(S),\rk_M(T))$ and 0 otherwise. Since the sum is empty if $r>\rk_M(S)$ or $s>\rk_M(T)$, we'll just show that \[\sum_{\substack{P(N)\in\mathcal{D}_{in}\\\rk_N(S)\ge r,\ \rk_N(T)\ge s}}(-1)^{\dim P(M)-\dim P(N)}=1\] for $r\le \rk_M(S)$ and $s\le \rk_M(T)$.

Let $l_S$ be the linear function on $\R^n$ sending $(x_i)$ to $\sum_{i\in S}x_i$. Note that \[\rk_N(S)=\max_{x\in P(N)}l_S(x).\] So $\rk_N(S)\ge r$ if and only if $P(N)$ intersects the half-space $l_S>r-\frac12$. So our equality follows by applying Lemma \ref{lem:euler} to $P(M)\cap\{l_M>r-\frac12\}\cap\{l_N>s-\frac12\}$.
\end{proof}

\begin{proof}[Proof of Theorem \ref{thm:expval}]
This follows directly: \[\ec(M)=\frac\partial{\partial x}\frac\partial{\partial y}s_M(0,0,-1).\]
\end{proof}

\chapter{Code}
In the course of doing the research described here, I found it helpful to write a script in Macaulay2 which I've uploaded to my Web site at \texttt{http://nicf.net/static/posshift.m2}. This section is a short description of some of the functions defined in that file and how to use them. The descriptions of the functions below are followed by a short example.

\begin{itemize}
\item \texttt{initialize}: This function should be called before doing anything else. It takes three parameters, \texttt{n}, \texttt{k}, and \texttt{stief}, and it defines the rings necessary to work in the Grassmannian $G(k,n)$. If \texttt{stief} is true, then computations are done instead in the Stiefel cone, that is, the variety of full-rank $k\times n$ matrices.
\item \texttt{ssToMatroid}: This function takes a list of numbers which define the siteswap of a juggling pattern (as described in \cite{klspos}) and produces a matroid. Matroids are stored as lists of lists of numbers, each of which is a basis.
\item \texttt{matroidToSS}: This is the partial inverse to the previous function.
\item \texttt{matroidToIdeal}: This takes a matroid and returns the ideal of the subvariety of the Grassmannian defined by the vanishing of the Pl\"ucker coordinates corresponding to nonbases of the matroid. (Note that this is \emph{not} always the matroid variety.)
\item \texttt{matroidToIdeal'}: This function is like the previous one, but returns the ideal of the matroid variety. It's often much slower.
\item \texttt{idealToMatroid}: This is the partial inverse to \texttt{matroidToIdeal}. (Note: not to \texttt{matroidToIdeal'}.)
\item \texttt{ssToIdeal}: This is the composition of \texttt{matroidToIdeal} and \texttt{ssToMatroid}.
\item \texttt{idealToSS}: This is the composition of \texttt{matroidToSS} and \texttt{idealToMatroid}.
\item \texttt{geomShift}: This function performs a geometric shift. It takes three parameters, \texttt{i}, \texttt{j}, and \texttt{I}, and outputs the result of the shift $\Sha_{j\to i}I$.
\end{itemize}

\subsection*{Example}
This example shows the result of shifting 2 to 5 in the subvariety of $G(3,5)$ defined by the ideal $(p_{123},p_{345})$. We end up with two components: one where 5 is a coloop and one where 2 and 3 are parallel.

\begin{lstlisting}
i1 : initialize(5,3,false)

o1 = false

i2 : ssToIdeal {2,4,2,3,4}

o2 = ideal (p         , p         )
             {1, 2, 3}   {3, 4, 5}

o2 : Ideal of G

i3 : geomShift(5,2,o2)

o3 = ideal (p         , p         )
             {2, 3, 4}   {1, 2, 3}

o3 : Ideal of G

i4 : decompose o3

o4 = {ideal (p         , p         , p         , p         ), ideal (p         ,
              {2, 3, 4}   {1, 3, 4}   {1, 2, 4}   {1, 2, 3}           {2, 3, 5} 
     ---------------------------------------------------------------------------
     p         , p         )}
      {2, 3, 4}   {1, 2, 3}

o4 : List

i5 : idealToMatroid \ o4

o5 = {{{1, 2, 5}, {1, 3, 5}, {2, 3, 5}, {1, 4, 5}, {2, 4, 5}, {3, 4, 5}}, {{1,
     ---------------------------------------------------------------------------
     2, 4}, {1, 3, 4}, {1, 2, 5}, {1, 3, 5}, {1, 4, 5}, {2, 4, 5}, {3, 4, 5}}}

o5 : List

\end{lstlisting}

\bibliography{thesisbib}

\end{document}